\theoremstyle{plain}
\newtheorem{theorem}{Theorem}[section]
\newtheorem{lemma}[theorem]{Lemma}
\newtheorem{proposition}[theorem]{Proposition}
\theoremstyle{definition}
\newtheorem{definition}[theorem]{Definition}
\theoremstyle{remark}
\newtheorem{assumption}{Assumption}
\def\1{\bm{1}}
\DeclareMathAlphabet{\mathsfit}{\encodingdefault}{\sfdefault}{m}{sl}
\SetMathAlphabet{\mathsfit}{bold}{\encodingdefault}{\sfdefault}{bx}{n}
\newcommand{\E}{\mathbb{E}}
\newcommand{\R}{\mathbb{R}}
\DeclareMathOperator*{\argmax}{arg\,max}
\DeclareMathOperator*{\argmin}{arg\,min}
\newcommand{\bx}{{\mathbf x}}
\newcommand{\by}{{\mathbf y}}
\newcommand{\bz}{{\mathbf z}}
\newcommand{\ba}{{\mathbf a}}
\newcommand{\bc}{{\mathbf c}}
\newcommand{\X}{{\mathcal X}}
\newcommand{\Y}{{\mathcal Y}}
\newcommand{\bxi}{\boldsymbol{\xi}}
\newcommand{\hbg}{\hat{\mathbf{g}}}
\newcommand{\bzt}{\boldsymbol{\zeta}}
\newcommand{\bdelta}{\boldsymbol{\Delta}}
\newcommand{\reals}{\mathbb{R}}
\newcommand{\bg}{\mathbf{g}}
\newcommand{\Pc}{\mathcal{P}}
\newcommand{\Dc}{\mathcal{D}}
\begin{document}

\articletype{ARTICLE}

\title{Weakly-Convex Concave Min-Max Optimization: Provable Algorithms and Applications in Machine Learning}

\author{
\name{Hassan Rafique\textsuperscript{a}, Mingrui Liu\textsuperscript{b}, Qihang Lin\textsuperscript{c}\thanks{CONTACT Qihang Lin. Email: qihang-lin@uiowa.edu} and Tianbao Yang\textsuperscript{b}}
\affil{\textsuperscript{a}Program in Applied Mathematical and Computational Sciences, The University of Iowa, Iowa City, IA, USA; \textsuperscript{b}Department of Computer Science, The University of Iowa; \textsuperscript{c}Department of Business Analytics, The University of Iowa }
}

\maketitle

\begin{abstract}
Min-max problems have broad applications in machine learning, including learning with non-decomposable loss and learning with robustness to data distribution. Convex-concave min-max problem is an active topic of research with efficient algorithms and sound theoretical foundations developed. However, it remains a challenge to design provably efficient algorithms for non-convex min-max problems with or without smoothness. In this paper, we study a family of non-convex min-max problems, whose objective function is weakly convex in the variables of minimization and is concave in the variables of maximization. We propose a proximally guided stochastic subgradient method and a proximally guided stochastic variance-reduced method for the non-smooth and smooth instances, respectively, in this family of problems. We analyze the time complexities of the proposed methods for finding a nearly stationary point of the outer minimization problem corresponding to the min-max problem. 
\end{abstract}

\begin{keywords}
min-max optimization, non-smooth optimization; stochastic optimization; non-convex optimization
\end{keywords}

\section{Introduction}
\label{sec:intro}

The goal of this paper is to design provably efficient algorithms for solving min-max (aka saddle-point) problems of the following form:
\begin{eqnarray}\label{eqn:P1}
\min_{\bx\in\mathbb{R}^p} \Big\{\psi(\bx):=\max_{\by\in\mathbb{R}^q}\left[ f(\bx, \by)-r(\by)\right]+g(\bx)\Big\},
\end{eqnarray}
where $f:\mathbb{R}^p\times\mathbb{R}^q\rightarrow\mathbb{R}$ is continuous, and $r:\mathbb{R}^q\rightarrow\mathbb{R}\cup\{+\infty\}$ and $g:\mathbb{R}^p\rightarrow\mathbb{R}\cup\{+\infty\}$ are proper and closed.\footnote{A closed function can be also called a lower semi-continuous function.} Detailed assumptions (e.g. weak convexity) on $f$, $r$ and $g$ will be given in Assumptions~\ref{assume:stochastic},~\ref{assume:stochastic1} and~\ref{assume:finitesum}. Many machine learning tasks can be  formulated as \eqref{eqn:P1} and some applications are given in Section~\ref{sec:apps}. Although there exist many studies on min-max problems in the form of~\eqref{eqn:P1}, most of them focus on {\it convex-concave} problems, where both $r$ and $g$ are convex and $f(\bx, \by)$ is convex in $\bx$ given $\by$ and is concave in $\by$ given $\bx$. On the contrary, this paper focuses on developing provably efficient algorithms for~(\ref{eqn:P1}) that exhibits non-convexity in $\bx$.

When designing an algorithm for~(\ref{eqn:P1}) with non-convexity, the important questions are what type of solutions can the algorithm guarantees and what runtime the algorithm needs to find such solutions. In the recent studies on non-convex minimization~\citep{davis2018stochastic,Davis2018,davis2018complexity,davis2017proximally,drusvyatskiy2017proximal,Drusvyatskiy2018,DBLP:journals/siamjo/GhadimiL13a,DBLP:journals/mp/GhadimiL16,DBLP:journals/corr/abs/1805.05411,paquette2018catalyst,Reddi:2016:SVR:3045390.3045425,DBLP:conf/cdc/ReddiSPS16}, polynomial-time algorithms have been developed for finding a \textbf{nearly stationary point}, namely, a point close to another point where the subdifferential of objective function almost contains zero. However, these developments are for a general minimization problem without utilizing the max-structure in~\eqref{eqn:P1} when it arises, and thus are not directly applicable to~(\ref{eqn:P1}). For example, the method in \cite{davis2017proximally} required that a stochastic subgradient of $\psi$ is available at each iteration. This might not be true if the maximization over $\by$ in~(\ref{eqn:P1}) is non-trivial or if $f$ is given as an expectation.

To fill this gap, in this paper, we analyze the complexity of a first-order algorithm designed to find a nearly stationary point of $\psi(\bx)$ in (\ref{eqn:P1}) without solving the possibly complex inner maximization problem exactly for each $\bx$. We focus on a typical case of~(\ref{eqn:P1}) that is non-convex but weakly convex in $\bx$ and is concave in $\by$. We refer to this case of \eqref{eqn:P1} as a {\bf weakly-convex-concave} (WCC) min-max problem. We provide complexity analysis for two primal-dual stochastic first-order methods under different settings of $f$. We will discuss some important applications of WCC min-max problems in machine learning next and summarize our main contributions at the end of this section. 

\subsection{Applications Of WCC Min-Max Problem}\label{sec:apps}
{\bf Distributionally robust learning.} \cite{DBLP:conf/nips/NamkoongD17} formulates the problem of distributionally robust learning as
\begin{eqnarray}\label{eqn:P2}
\min_{\bx\in\X}\max_{\by\in\Y}\sum_{i=1}^ny_i f_i(\bx) - r(\by),
\end{eqnarray}
where $f_i(\bx)$ denotes the loss of a model $\bx$ on the $i$th data point,  $\Y = \{\by \in \reals^n~|~ \sum^n_{i=1} y_i = 1,~ y_i \geq 0, i=1,....,n\}$, 
$\mathcal{X} \subseteq \reals^d$ is a closed convex  set, and $r: \Y \rightarrow \reals$ is a closed convex function, e.g.,
$r(\by) = \theta D(\by, \mathbf 1/n)$ for some $\theta>0$, where $\mathbf 1$ is an all-one vector and $D(\cdot, \cdot)$ denotes some distance measure (e.g., KL divergence and Euclidean distance) between two vectors. The loss $f_i(\bx)$ can be a non-convex function such as the loss function defined by a neural network or the non-convex truncated loss~\cite{DBLP:journals/corr/abs-1805-07880}. It is easy to see that \eqref{eqn:P2} is a special case of \eqref{eqn:P1}.

{\bf Learning with non-decomposable loss.} Problem \eqref{eqn:P1} also covers some models where the objective function cannot be represented as a summation of the losses over a training set. 
One example is minimizing {\it the sum of top-$k$ losses}~\citep{DBLP:conf/nips/FanLYH17,DBLP:conf/nips/DekelS06}:
\begin{align*}
\min_{\bx\in\mathcal X}\sum_{i=1}^k f_{\pi_i}(\bx),
\end{align*}
where $(\pi_i)_{i=1}^n$ is a permutation of $\{1,\ldots, n\}$ such that $f_{\pi_1}(\bx)\geq f_{\pi_2}(\bx)\cdots \geq f_{\pi_n}(\bx)$. When $k=1$, it reduces to the minimization of maximum loss~\citep{DBLP:conf/icml/Shalev-ShwartzW16}. 
This problem can be reformulated as the form~\eqref{eqn:P1} according to~\cite{bennett1988interpolation} and~\cite{DBLP:conf/nips/DekelS06} and our methods can be applied even if $f_i(\bx)$ is non-convex.

{\bf Robust learning from multiple distributions.} Let $ P_1, \ldots,  P_m$ denote $m$ distributions of data, e.g., different perturbed versions of the underlying true distribution $ P_0$. Robust learning from multiple distributions is to minimize the maximum of the expected losses over the $m$ distributions, i.e., 
\begin{align}\label{eqn:P4}
\min_{\bx\in\X}\max_{i} \E_{\bxi\sim P_i}[F(\bx; \bxi)]
=\min_{\bx\in\X}\max_{\by\in\Y} \sum_{i=1}^my_if_i(\bx),
\end{align}
where $\Y = \{\by \in \reals^m~|~ \sum^m_{i=1} y_i = 1,~ y_i \geq 0, i=1,....,m\}$, $F(\bx; \bxi)$ is a non-convex loss function (e.g., the loss defined by a deep neural network) of a model parameterized by $\bx$ on an example $\bxi$, and  
$f_i(\bx)=\E_{\bxi\sim P_i}[F(\bx; \bxi)]$ denotes the expected loss on distribution $P_i$. It is an instance of~(\ref{eqn:P1}) that is non-convex in $\bx$ and concave in $\by$. Such a formulation also has applications in adversarial machine learning~\citep{DBLP:journals/corr/MadryMSTV17,DBLP:journals/corr/abs-1710-10571}, in which $P_i$ corresponds to a distribution used to generate adversarial examples.

\subsection{Contributions} 
\label{sec:contrib}
Despite the concavity of $f$ in $\by$, there still remain challenges in developing provably efficient  algorithms for (\ref{eqn:P1}) due to the non-convexity of $f$ in $\bx$.
The major contribution of this work is the development and analysis of two efficient stochastic algorithms for the WCC min-max problem~(\ref{eqn:P1}) without evaluating the maximization in~(\ref{eqn:P1}) exactly at each iteration. We establish the convergence of the proposed algorithms to a nearly stationary point of $\psi(\bx)$. The main \textbf{key assumption} we make is that $f(\bx, \by)$ is weakly convex in $\bx$ and concave in $\by$. We summarize our contributions as follows.
\begin{itemize}
	\item When $f$ is potentially non-smooth and given as the expectation of a stochastic function, we present a stochastic primal-dual subgradient algorithm for \eqref{eqn:P1}. This method achieves time complexity of $\tilde O(1/\epsilon^6)$ for finding a nearly $\epsilon$-stationary solution.~\footnote{Here and in the rest of the paper, $\widetilde O$ suppresses all logarithmic factors.} 
	\item When $r$ is strongly convex and $f$ can be represented as the average of $n$ potentially non-smooth functions linear in $\by$ (see D2 in Assumption~\ref{assume:stochastic1}), the complexity of our method can be improved to $\tilde O(n/\epsilon^2+1/\epsilon^4)$. 
	To the best of our knowledge, this is the first result on accelerating a stochastic primal-dual subgradient algorithm for a non-convex and non-smooth min-max problem under strong concavity.
	\item When $f$ is represented as the average of $n$ smooth functions, we present a stochastic variance-reduced gradient method for \eqref{eqn:P1}. When $r$ is strongly convex, our method has time complexity of $\tilde O(n/\epsilon^2)$ for finding a nearly $\epsilon$-stationary solution. When  $r$ is convex, this method has time complexity of $\tilde O(n/\epsilon^2+1/\epsilon^6)$.
\end{itemize}


\section{Related Work}
Recently, there is a growing interest in stochastic algorithms for non-convex optimization.  When the objective function is smooth, stochastic gradient descent~\citep{DBLP:journals/siamjo/GhadimiL13a} and its variants~\citep{yangnonconvexmo,DBLP:journals/mp/GhadimiL16} have been analyzed for finding a stationary point of the objective function. When the objective function has a finite-sum structure, accelerated stochastic algorithms with improved complexity have been developed based on the variance reduction techniques~\citep{DBLP:conf/cdc/ReddiSPS16,Reddi:2016:SVR:3045390.3045425,DBLP:journals/corr/abs/1805.05411,DBLP:conf/icml/Allen-Zhu17,DBLP:conf/icml/ZhuH16}. 
However, these methods are not directly applicable to \eqref{eqn:P1} because of the non-smoothness of $\psi$.

Several recent papers have proposed algorithms and provided theoretical guarantees for non-convex optimization with non-smooth objective functions by assuming the objective function is weakly convex~\citep{Davis2018,davis2018stochastic,Drusvyatskiy2018,davis2017proximally,chen18stagewise,zhang2018convergence}. Although the objective function $\psi$ in our problem is also weakly convex, most of the aforementioned methods cannot be directly applied to \eqref{eqn:P1} because they need to compute the (stochastic) subgradient of $\psi(\bx)$ which requires exactly solving the concave maximization problem $\max_{\by\in\mathbb{R}^q}[f(\bx, \by)-r(\by)]$ that can be challenging, e.g., when $f$ involves expectations as in~(\ref{eqn:P4}).

The following weakly convex composite optimization has been studied by~\cite{Drusvyatskiy2018},
\begin{eqnarray}
\label{eq:composite}
\min_{\bx\in\mathbb{R}^p}h(\mathbf{c}(\bx))+g(\bx),
\end{eqnarray}
where $g:\mathbb{R}^p\rightarrow\mathbb{R}\cup\{+\infty\}$ is closed and convex, $h:\mathbb{R}^q\rightarrow\mathbb{R}$ is Lipschitz-continuous and convex and $\mathbf{c}:\mathbb{R}^p\rightarrow\mathbb{R}^q$ is smooth with a Lipschitz-continuous Jacobian matrix. It is a special case of~(\ref{eqn:P1}) because we can reformulate \eqref{eq:composite} as
\begin{eqnarray}
\label{eq:compositenew}
\min_{\bx\in\mathbb{R}^p}\max_{\by\in\mathbb{R}^q}\by^\top\mathbf{c}(\bx)-h^*(\by)+g(\bx),
\end{eqnarray}
where $h^*(\bz)=\sup_{\by\in\mathbb{R}^q}\{\by^\top\bz-h(\by)\}$ is the conjugate  of $h$. 
\cite{Drusvyatskiy2018} proposed a prox-linear method for \eqref{eq:composite} where the iterate $\bx^{(t)}$ is updated by approximately solving
\begin{align}
\label{eq:compositesub}
&\bx^{(t+1)}\approx ~~~
\argmin_{\bx\in\mathbb{R}^p} \left\{
\begin{array}{l}h\left(\mathbf{c}(\bx^{(t)})+\nabla \mathbf{c}(\bx^{(t)})(\bx-\bx^{(t)})\right)+g(\bx)+\frac{1}{2\eta}\|\bx-\bx^{(t)}\|_2^2
\end{array}
\right\}.
\end{align}
Here, $\nabla \mathbf{c}$ is the Jacobian matrix of $\mathbf{c}$ and $\eta>0$ is a step length. They considered using accelerated gradient method to solve \eqref{eq:compositesub}. However, this approach requires the exact evaluation of $\mathbf{c}$ and $\nabla\mathbf{c}$ which is computationally expensive, for example, when $\mathbf{c}$ is given as an expectation or a finite sum of many terms (see \eqref{eqn:P2} and \eqref{eqn:P4}). One approach to avoid this issue is to solve \eqref{eq:compositesub} as a min-max subproblem
\begin{align}
\label{eq:compositesubnew}
&\bx^{(t+1)}\approx ~~~
\argmin_{\bx\in\mathbb{R}^p}\max_{\by\in\mathbb{R}^q}\left\{
\begin{array}{l}
\by^\top\left[\mathbf{c}(\bx^{(t)})+\nabla \mathbf{c}(\bx^{(t)})(\bx-\bx^{(t)})\right]\\
-h^*(\by)+g(\bx)+\frac{1}{2\eta}\|\bx-\bx^{(t)}\|_2^2
\end{array}
\right\}
\end{align}
using primal-dual stochastic first-order methods that only require the sample approximation of $\mathbf{c}$ and $\nabla\mathbf{c}$. However, they still require $\mathbf{c}$ to be smooth to ensure the convergence of $\bx^{(t)}$.
In contrast, our convergence analysis covers the case where $\mathbf{c}$ is non-smooth.

The authors of \cite{NIPS2017_7056} considered robust learning from multiple datasets to tackle uncertainty in the data, which is a special case of~(\ref{eqn:P4}). They assumed that the minimization over $\bx$ for any fixed $\by$ can be solved up to a certain optimality gap relative to the global minimum value. However, such an assumption does not hold when the minimization over $\bx$ is non-convex. \cite{DBLP:journals/corr/abs-1805-07588} considered a similar problem to~(\ref{eqn:P4}) and analyzed a primal-dual stochastic algorithm. There are several differences between their results and ours: (i) they require $\E_{\ba\sim  P_i}[f(\bx; \ba)]$ to be smooth while we assume it is weakly convex but not necessarily smooth; (ii) they proved the convergence of the partial gradient of $f(\bx, \by)$ with respect to $\bx$ while we provide the convergence of the gradient of the Moreau envelope of $\psi(\bx)$; (iii) when $f(\bx, \by)$ is smooth and has a finite-sum structure, we provide an accelerated stochastic algorithm based on variance reduction which they did not consider. 
\citep{thekumparampil2019efficient,lu2019block,lu2019hybrid,nouiehed2019solving} propose deterministic algorithms for a problem similar to~(\ref{eqn:P1}) and  \citep{lin2019gradient,boct2020alternating} consider both deterministic and stochastic methods for~(\ref{eqn:P1}). These studies, except \cite{boct2020alternating}, all assume $f$ is smooth, and \cite{nouiehed2019solving} further assume $f$ satisfies the Polyak-Lojasiewicz condition in $\by$. On the contrary, our results cover the case where $f$ is non-smooth and, without strong concavity in $y$, our stochastic algorithm has lower complexity than \citep{lin2019gradient,boct2020alternating}. In particular, the complexity of our method for finding a nearly $\epsilon$-stationary point is $\tilde O(\frac{1}{\epsilon^6})$ without strong concavity while the complexity of \citep{lin2019gradient,boct2020alternating} is $\tilde O(\frac{1}{\epsilon^8})$.



\section{Preliminaries}\label{sec:pre}
We consider the min-max problem \eqref{eqn:P1} where $f:\mathbb{R}^p\times\mathbb{R}^q\rightarrow\mathbb{R}$ is continuous, $r:\mathbb{R}^q\rightarrow\mathbb{R}\cup\{+\infty\}$ and $g:\mathbb{R}^p\rightarrow\mathbb{R}\cup\{+\infty\}$ are proper and closed. 
According to most of the applications of~(\ref{eqn:P1}) (see Section~\ref{sec:apps}), the roles of $\bx$ and $\by$ are often not symmetric, e.g.,  
$\by$ are often restricted to a simplex in $\mathbb{R}^q$ but $\bx$ are not. Hence, 
we only equip $\mathbb{R}^p$ with the Euclidean norm $\|\cdot\|_2$ but equip $\mathbb{R}^q$ with a generic norm $\|\cdot\|$ so that the algorithm can better adapt to the geometry of the feasible set of $\by$. We denote the dual norm of $\|\cdot\|$ by $\|\cdot\|_*$ and define $\text{Dist}(\bx,\mathcal{S}):=\min_{\bx'\in\mathcal{S}}\|\bx'-\bx\|_2$ for any set $\mathcal{S}\subset\mathbb{R}^p$. 

Given $h:\reals^d\rightarrow \reals\cup\{+\infty\}$, the  subdifferential of $h$ is
\begin{align*}
\partial h(\bx)=
\left\{\bzt\in\mathbb{R}^d\bigg| 
\begin{array}{l}
h(\bx')
\geq h(\bx)+\bzt^\top(\bx'-\bx)
+o(\|\bx'-\bx\|_2), ~\bx'\rightarrow\bx
\end{array}
\right\},
\end{align*}
where each element in $\partial h(\bx)$ is called a subgradient of $h$ at $\bx$. Let $\partial_x f(\bx,\by)$ be the subdifferential of $f(\bx,\by)$ with respective to $\bx$ and  $\partial_y[-f(\bx,\by)]$ be the subdifferential of $-f(\bx,\by)$ with respective to $\by$.

Let 
$$
\X:=\text{dom}(g)\text{ and }\Y:=\text{dom}(r).
$$
Let $d_y:\Y\rightarrow\reals$ be a \emph{distance generating function} with respect to $\|\cdot\|$, meaning that $d_y$ is convex on $\Y$,  continuously differentiable and $1$-strongly convex with respect to $\|\cdot\|$ on  $\Y^o:=\{\by\in\Y|\partial d_y(\by)\neq\emptyset\}$. The \emph{Bregman divergence} associated to $d_y$ is defined as
$V_y:\Y\times\Y^o\rightarrow \mathbb{R}$ such that 
\begin{eqnarray}
\label{eq:breg}
V_y(\by,\by'):=d_y(\by)-d_y(\by')-\left\langle\nabla  d_y(\by'),\by-\by'\right\rangle.
\end{eqnarray}
Note that we have $V_y(\by,\by')\geq \frac{1}{2}\|\by-\by'\|^2$.
We say a function $h:\Y\rightarrow\reals$ is \emph{$\mu$-strongly convex ($\mu\geq0$) with respect to $V_y$} if 
$$
h(\by)\geq h(\by')+\bzt^\top(\by-\by')+\mu V_y(\by,\by')
$$
for any $(\by,\by')\in\Y\times\Y^o$ and any $\bzt\in\partial h(\by')$. We say a function $h:\X\rightarrow\reals$ is \emph{$\rho$-weakly convex} ($\rho\geq0$) if
$$
h(\bx)\geq h(\bx')+\bzt^\top(\bx-\bx')-\frac{\rho}{2}\|\bx-\bx'\|_2^2
$$
for any $\bx,\bx'\in\X$ and any $\bzt\in\partial h(\bx')$.

The following assumptions are made throughout the entire paper:
\begin{assumption}
	\label{assume:stochastic}
	The following statements hold: 
	\begin{itemize}
		\item[(A)]  $f(\bx, \by)$ is $\rho$-weakly convex in $\bx$ for any $\by\in\Y$.
		\item[(B)]  $f(\bx, \by)$ is concave in $\by$ for any $\bx\in\X$. 
		\item[(C)] $r$ is closed and $\mu$-strongly convex with respect to $V_y$ with $\mu\geq 0$ ($\mu$ can be zero) and $g$ is closed and convex. 
		\item[(D)] $\psi^*:=\min_{\bx\in\mathbb{R}^p}\psi(\bx)>-\infty$.
		\end{itemize}
	\end{assumption}

Under Assumption~\ref{assume:stochastic}, $\psi$ is $\rho$-weakly convex (but not necessarily convex) so that finding the global optimal solution in general is difficult. An alternative goal is to find a \textbf{stationary} point of (\ref{eqn:P1}), i.e., a point $\bx_*$ with
$\mathbf{0}\in\partial \psi(\bx_*)$.
An exact stationary point, in general, can only be approached in the limit as the number of iterations increases to infinity. Within finitely many iterations, a more reasonable goal is to find an $\epsilon$-stationary point defined below. 
\begin{definition}
	A point $\widehat\bx\in\X$  is an \textbf{$\epsilon$-stationary} point if $\text{Dist}(\mathbf{0},\partial\psi(\widehat\bx))
	\leq \epsilon$.
\end{definition}
However, when $\psi$ is non-smooth, computing an $\epsilon$-stationary point is still difficult even for a convex problem. A simple example is $\min_{x\in\reals}|x|$ where the only stationary point is $0$ but $x\neq 0$ is not an $\epsilon$-stationary point with $\epsilon<1$ no matter how close $x$ is to $0$. This situation is likely to occur in problem (\ref{eqn:P1}) because of not only the potential non-smoothness of $f$ and $r$ but also the inner maximization. Therefore, following \cite{Davis2018}, \cite{davis2017proximally}, \cite{davis2018complexity} and \cite{zhang2018convergence}, we consider the \emph{Moreau envelope} of $\psi$ defined as 
\begin{equation}\label{moreau}
\psi_{\gamma}(\bx):= \min_{\bz \in \mathbb{R}^p} \left\{\psi(\bz)+\frac{1}{2\gamma}\|\bz-\bx\|_2^2\right\}
\end{equation}
for a constant $\gamma>0$. For a $\rho$-weakly convex function $\psi$, it can be shown that $\psi_{\gamma}$ is smooth  when $\frac{1}{\gamma}>\rho$ \cite{davis2017proximally} and its gradient is 
\small
\begin{equation}\label{moreaugrad}
\nabla\psi_\gamma (\bx)=\gamma^{-1}(\bx-\text{prox}_{\gamma\psi}(\bx)),
\end{equation}
\normalsize
where 
\begin{align*}
    \text{prox}_{\gamma\psi}(\bx):=\argmin_{\bz \in \mathbb{R}^p} \left\{\psi(\bz)+\frac{1}{2\gamma}\|\bz-\bx\|_2^2\right\}.
\end{align*}
Note that, when $\frac{1}{\gamma}>\rho$, the minimization in (\ref{moreau}) and (\ref{moreaugrad}) is strongly convex so that $\text{prox}_{\gamma\psi}(\bx)$ is uniquely defined. We then give the following definition
\begin{definition}
A point $\bar\bx\in\X$  is a \textbf{nearly $\epsilon$-stationary} point if $\|\nabla\psi_{\gamma} (\bar\bx)\|\leq \epsilon$.
\end{definition}
According to \cite{Davis2018}, \cite{davis2017proximally}, \cite{davis2018complexity} and \cite{zhang2018convergence}, the norm of the gradient $\|\nabla\psi_{\gamma} (\bx)\|$ can be used as a measure of the quality of a solution $\bx$. In fact, let $\bx_\dagger=\text{prox}_{\gamma\psi}(\bar\bx)$. The definition \eqref{moreaugrad} and the optimality condition of $\bx_\dagger$ directly imply that
$$ 
	\|\bx_\dagger-\bar\bx\|=\gamma\|\nabla\psi_{\gamma} (\bar\bx)\|\quad\text{ and }\quad\text{Dist}(\mathbf{0},\partial \psi(\bx_\dagger))\leq\|\nabla\psi_{\gamma} (\bar\bx)\|.
$$
Therefore, if we can find a nearly $\epsilon$-stationary point $\bar\bx$, we will ensure
$\|\bx_\dagger-\bar\bx\|\leq \gamma\epsilon$ and $\text{Dist}(\mathbf{0},\partial \psi(\bx_\dagger))\leq \epsilon$, or in other words, we will find a solution $\bar\bx$ that is $\gamma\epsilon$-closed to  $\epsilon$-stationary point (i.e., $\bx_\dagger$). Hence, we will focus on developing first-order methods for  (\ref{eqn:P1}) and analyze their time complexity for finding a nearly $\epsilon$-stationary point.


\section{Proximally Guided Approach}
The method we proposed is largely inspired by the proximally guided stochastic subgradient method by \cite{davis2017proximally},  which is a variant of the inexact proximal point method~\cite{citeulike:9472207}. \cite{davis2017proximally} considers $\min_{\bx\in\mathbb{R}^p}\psi(\bx)$ with $\psi(\bx)$ being a $\rho$-weakly convex function that does not necessarily have the maximization structure in (\ref{eqn:P1}). Given an iterate $\bar\bx^{(t)}\in\mathcal{X}$, their method uses the standard stochastic subgradient (SSG) method to approximately solve \eqref{moreau} with $\bx=\bar\bx^{(t)}$ and $\frac{1}{\gamma}>\rho$, namely, to compute a solution $\bar\bx^{(t+1)}\in\X$ so that
\begin{eqnarray}
\label{eq:subproblem}
\bar\bx^{(t+1)}\approx \bx_\dagger^{(t)}= \text{prox}_{\gamma\psi}(\bar\bx^{(t)})=\argmin_{\bz \in \mathbb{R}^p} \left\{\psi(\bz)+\frac{1}{2\gamma}\|\bz-\bar\bx^{(t)}\|_2^2\right\}.
\end{eqnarray} 
Then $\bar\bx^{(t+1)}$ returned by the SSG method will be used as the next iterate. They assume that a (stochastic) subgradient of $\psi(\bx)$ can be computed during the SSG method, which does not hold in our setting due to the maximization in (\ref{eqn:P1}).  

To address this issue, we consider the following min-max problem according to (\ref{eq:subproblem}): 
\begin{equation}\label{subminmaxproblem0}
\min_{\bx \in \mathbb{R}^p}\max_{\by\in\mathbb{R}^q} \left\{\phi_{\gamma}(\bx,\by;\bar\bx):=f(\bx,\by)-r(\by)+g(\bx)+\frac{1}{2\gamma}\|\bx-\bar\bx\|_2^2\right\},
\end{equation}
and equivalently solve \eqref{eq:subproblem} as
\begin{equation}\label{subminmaxproblem1}
\bar\bx^{(t+1)}\approx \argmin_{\bx \in \mathbb{R}^p}\max_{\by\in\mathbb{R}^q}\phi_{\gamma}(\bx,\by;\bar\bx^{(t)}).
\end{equation}

Existing primal-dual first-order methods can be applied to (\ref{subminmaxproblem0}), or specifically, (\ref{subminmaxproblem1}), and the returned approximate solution $\bar\bx^{(t+1)}$ is then used as the next iterate. In this section, we will consider two cases for $f$: when $f$ is given as an expectation and potentially non-smooth, and when $f$ is a finite sum and smooth. 

For a non-smooth min-max problem, strong convexity and strong concavity are simultaneously needed to accelerate a primal-dual first-order method. Under Assumption~\ref{assume:stochastic}, when $\frac{1}{\gamma}>\rho$, $\phi_{\gamma}(\bx,\by;\bar\bx^{(t)})$ is $(\frac{1}{\gamma}-\rho)$-strongly convex in $\bx$ and $\mu$-concave with respect to $V_y$ in $\by$. Although a deterministic primal-dual method for (\ref{subminmaxproblem1}) with a non-smooth $f$ can be accelerated when $\mu>0$, it is surprising that there is no existing stochastic primal-dual methods that can be accelerated when $\mu>0$. In Section~\ref{sec:stochastic}, we propose a new stochastic primal-dual method for (\ref{eqn:P1}) whose complexity for finding a nearly $\epsilon$-stationary point can be improved from $\tilde {O}(\epsilon^{-6})$ when $\mu=0$ to $\tilde {O}(\epsilon^{-4})$ when $\mu>0$ under a structural assumption (D2 in Assumption~\ref{assume:stochastic1}). This is the first stochastic algorithm for non-smooth non-convex min-max problems that can be accelerated under strong concavity.
\subsection{Stochastic Min-Max Problem}\label{sec:stochastic}
In this section, we make the following assumptions in addition to Assumption~\ref{assume:stochastic}. 
\begin{assumption}
	\label{assume:stochastic1}
	The following statements hold: 
	\begin{itemize}
		\item[(A)] For any $(\bx,\by)\in\mathcal{X}\times \mathcal{Y}$, we can compute two random vectors, denoted by $\bg_x(\bx,\by)\in\mathbb{R}^p$ and $\bg_y(\bx,\by)\in\mathbb{R}^q$, such that 
		$(\mathbb{E}\bg_x(\bx,\by),-\mathbb{E}\bg_y(\bx,\by))\in\partial_x f(\bx,\by)\times \partial_y [-f(\bx,\by)]$.
		\item[(B)] There exist constants $M_x>0$ and $M_y>0$ such that
		$\mathbb{E}\|\bg_x(\bx,\by)\|_2^2\leq M_x^2$ and  $\mathbb{E}\|\bg_y(\bx,\by)\|_*^2\leq M_y^2$ for any $(\bx,\by)\in\mathcal{X}\times\mathcal{Y}$. 
		\item[(C)] $Q_g:=\max_{\bx\in\X}g(\bx)-\min_{\bx\in\X}g(\bx)<+\infty$ and $Q_r:=\max_{\by\in\Y}r(\by)-\min_{\by\in\Y}r(\by)<+\infty$.
		\item[(D)] Either one of the following two statements holds:
		\begin{itemize}
			\item[D1.]$\mu=0$; 
		 $D_y:=\sqrt{2\max\limits_{\by\in\Y}d_y(\by)-2\min\limits_{\by\in\Y}d_y(\by)}<+\infty$; and $D_x:=\max\limits_{\bx,\bx'\in\mathcal{X}}\|\bx-\bx'\|_2<+\infty$.
			\item[D2.]$\mu>0$; $f(\bx,\by)=\frac{1}{n}\sum_{i=1}^n\by\top\mathbf{c}_i(\bx)$ where $\by^\top\mathbf{c}_i(\bx)$ is $\rho$-weakly convex in $\bx$ for any $\by\in\Y$;  and 
			$\mathbf{c}(\bx):=\frac{1}{n}\sum_{i=1}^n\mathbf{c}_i(\bx)$ satisfies $\|\mathbf{c}(\bx)-\mathbf{c}(\bx')\|_*\leq M_c\|\bx-\bx'\|_2$ for some $M_c\geq 0$ and any $\bx$ and $\bx'$ in $\mathcal{X}$. Moreover, $\bg_x$ and $\bg_y$ in Assumption~\ref{assume:stochastic1} (A) above satisfy $\bg_x(\bx,\by)\in\partial_x[\by^\top \bc_{\bxi}(\bx)]$ and $\bg_y(\bx,\by)=\bc_{\bxi}(\bx)$, where $\bxi$ is a uniformly random index from $\{1,\dots,n\}$.
		\end{itemize}
	\end{itemize}
\end{assumption}

\begin{algorithm}[t]
	\caption{SMD$(\bar\bx,\bar\by,\eta_x,\eta_y,\gamma,J)$: Stochastic Mirror Descent Method for \eqref{subminmaxproblem0}}\label{alg:SMD}
	\begin{algorithmic}[1]
		\STATE \textbf{Input:} $(\bar\bx,\bar\by) \in \mathcal{X}\times \mathcal{Y}^o$, $\eta_x>0$, $\eta_y>0$, $\gamma \in (0,1/(2\rho)]$, and $J\geq 2$.  
		\STATE Set $(\bx^{(0)},\by^{(0)})=(\bar\bx,\bar\by)$.
		\FOR {$j = 0,...,J -2$}
		\STATE Compute
	 $\bg_x^{(j)}=\bg_x(\bx^{(j)},\by^{(j)})$
	 and 
	$\bg_y^{(j)}=\bg_y(\bx^{(j)},\by^{(j)})$.
		\STATE Solve  
		\small
		\begin{align*}
		\bx^{(j+1)}&=\argmin_{\bx\in\mathbb{R}^p} \left\{
		\begin{array}{l}
		\bx^\top\bg_x^{(j)} +\frac{1}{2\eta_x}\|\bx-\bx^{(j)}\|_2^2	
		+g(\bx) +\frac{1}{2\gamma}\|\bx-\bar\bx\|_2^2
		\end{array}
		\right\}\\
		\normalsize
		 \by^{(j+1)}&=\argmin_{\by\in\mathbb{R}^q} \left\{ -\by^\top\bg_y^{(j)}
		+\frac{1}{\eta_y}V_y(\by,\by^{(j)})+r(\by) \right\}.
		\end{align*}

		\ENDFOR
		\STATE 
		\textbf{Output:} 
		$ \widehat\bx = \frac{1}{J} \sum^{J - 1}_{j=0}\bx^{(j)}$.
	\end{algorithmic}
\end{algorithm}

We propose a proximally guided stochastic mirror descent (PG-SMD) method for (\ref{eqn:P1}) in Algorithms~\ref{alg:SMD} and~\ref{alg:PGM}. Algorithm~\ref{alg:PGM} is the main algorithm which,  in its $t$th iteration, uses Algorithm~\ref{alg:SMD} to approximately solve (\ref{subminmaxproblem1}). The returned solution $\bar\bx^{(t+1)}$ will be used as the $(t+1)$th iterate of Algorithm~\ref{alg:PGM}. 
Algorithm~\ref{alg:SMD} is exactly the primal-dual SMD method~\cite{nemirovski2009robust} applied to (\ref{subminmaxproblem0}). It is initialized at $(\bar\bx^{(t)},\bar\by^{(t)})$ 
where $\bar\by^{(t)}=\argmin_{\by\in\mathbb{R}^q}d_y(\by)$ when Assumption~\ref{assume:stochastic1} D1 holds and $\bar\by^{(t)}=\argmax_{\by\in\mathbb{R}^q}f(\bar\bx^{(t)},\by)-r(\by)$ 
when Assumption~\ref{assume:stochastic1} D2 holds. Assumption~\ref{assume:stochastic1} D2 (i.e., $f(\bx,\by)$ is linear in $\by$) ensures this maximization can be solved easily, for example, in a closed form.  We emphasize that such an initialization of $\bar\by^{(t)}$ in the case of D2 is the key to utilize the strong concavity of $\phi_{\gamma}(\bx,\by;\bar\bx^{(t)})$ in $\by$ to reduce the complexity from $\tilde {O}(\epsilon^{-6})$ when $\mu=0$ to $\tilde {O}(\epsilon^{-4})$ when $\mu>0$.

\begin{algorithm}[t]
	\caption{PG-SMD: Proximally Guided Stochastic Mirror Descent Method}\label{alg:PGM}
	\begin{algorithmic}[1]
		\STATE \textbf{Input:} $\bar\bx^{(0)} \in \mathcal{X}$ and $\gamma \in (0,1/(2\rho)]$.  
		\FOR{$t=0,..., T-2$}
		\STATE 
		$\bar\by^{(t)}=\left\{\begin{array}{ll}\argmin\limits_{\by\in\mathbb{R}^q}d(\by)\text{ if  D1 in Assumption~\ref{assume:stochastic1} holds} \\
		\argmax\limits_{\by\in\mathbb{R}^q}f(\bar\bx^{(t)},\by)-r(\by)\text{ if D2 in Assumption~\ref{assume:stochastic1} holds.} 
		\end{array}
		\right.$
		\STATE 
		$
		(\eta_x^t,\eta_y^t,j_t)=\left\{\begin{array}{ll}\left(\frac{D_x}{M_x\sqrt{j_t}},\frac{D_y}{M_y\sqrt{j_t}},(t+3)^2\right)\text{ if D1 in Assumption~\ref{assume:stochastic1} holds}\\
		\left(\frac{60\gamma}{(j_t-30)},\frac{8M_c^2\gamma}{\mu^2j_t},t+32\right)\text{ if D2 in Assumption~\ref{assume:stochastic1} holds.} 
		\end{array}
		\right.
		$
		
		\STATE 
		$(\bar\bx^{(t+1)},\bar\by^{(t+1)})=\text{SMD}(\bar\bx^{(t)},\bar\by^{(t)},\eta_x^t,\eta_y^t,\gamma, j_t)$.
		\ENDFOR
		\STATE Sample $\tau$ uniformly randomly from $\{0,1,\dots,T-1\}$.
		\STATE\textbf{Output:} 
		$ \bx^{(\tau)}$
	\end{algorithmic}
\end{algorithm}
Theorem~\ref{thm:totalcomplexitySD} characterizes the time complexity for PG-SMD to find a nearly $\epsilon$-stationary point. We define the time complexity as the total number of the stochastic subgradients $(\bg_x,\bg_y)$ calculated and we count the complexity of solving $\max_{\by\in\mathbb{R}^q}f(\bar\bx^{(t)},\by)-r(\by)$ in case D2 as computing $n$ stochastic subgradients. The proof and the exact expressions of $T$ are provided in Section~\ref{sec:prooftheorem1} in Appendix. 
\begin{theorem}
	\label{thm:totalcomplexitySD}
	Suppose Assumptions~\ref{assume:stochastic} and~\ref{assume:stochastic1} hold. Let $T$ be the total number of iterates in Algorithm~\ref{alg:PGM} and let $\tau$ be a random index uniformly sampled from $\{0,1,\dots,T-1\}$. Algorithm~\ref{alg:PGM} ensures
	$\mathbb{E}[\|\nabla\psi_{\gamma} (\bar\bx^{(\tau)})\|^2]\leq \epsilon^2$ after $T=\tilde O(\epsilon^{-2})$ iterations with total time complexity of
	\begin{enumerate}
	    \item[(a)]  $\tilde {O}(\epsilon^{-6})$ if Case D1 in Assumption~\ref{assume:stochastic1} holds.
	    \item[(b)] $\tilde {O}(n\epsilon^{-2}+\epsilon^{-4})$ if Case D2 in Assumption~\ref{assume:stochastic1} holds.
	\end{enumerate}
\end{theorem}

\subsection{Smooth Finite-Sum Min-Max Problem}
In this section, we no longer need Assumption~\ref{assume:stochastic1} but make the following assumption in addition to Assumption~\ref{assume:stochastic}. 
\begin{assumption}
	\label{assume:finitesum}
	The following statements hold:
	\begin{itemize}
		\item[(A)] $f(\bx,\by) = \frac{1}{n}\sum_{i=1}^nf_i(\bx,\by)$ with $f_i:\mathbb{R}^p\times \mathbb{R}^q\rightarrow\mathbb{R}$.
		\item[(B)]$f_i$ is differentiable with $\nabla_xf_i$  $L_x$-Lipschitz continuous and $\nabla_yf_i$ $L_y$-Lipschitz continuous.~\footnote{$\nabla_xf_i$ and $\nabla_yf_i$ are the partial gradients of $f_i$ with respect to $\bx$ and $\by$, respectively.}
		\item[(C)]  $D_x:=\max\limits_{\bx,\bx'\in\mathcal{X}}\|\bx-\bx'\|_2<+\infty$ and \small$D_y:=\sqrt{2\max\limits_{\by\in\Y}d_y(\by)-2\min\limits_{\by\in\Y}d_y(\by)}<+\infty$\normalsize.
	\end{itemize}
\end{assumption}

Different from the previous section, we can exactly evaluate $\nabla f$ but the computational cost can be still high when $n$ is large. To reduce the cost, in Algorithm~\ref{alg:SVRG} and Algorithm~\ref{alg:PGSVRG}, we propose a proximally guided stochastic variance-reduced  gradient (PG-SVRG) method for (\ref{eqn:P1}).
We consider a min-max problem
\begin{equation}\label{subminmaxproblem0finitesum}
\min_{\bx \in \mathbb{R}^p}\max_{\by\in\mathbb{R}^q} \left\{\phi_{\gamma,\lambda}(\bx,\by;\bar\bx,\bar\by):=\phi_{\gamma}(\bx,\by;\bar\bx)-\frac{1}{\lambda}V_y(\by,\bar\by)\right\},
\end{equation}
where $\lambda\in(0,+\infty]$ and $\phi_{\gamma}(\bx,\by;\bar\bx)$ is defined in \eqref{subminmaxproblem0}. Given the current iterate $\bar\bx^{(t)}$, Algorithm~\ref{alg:PGSVRG} applies Algorithm~\ref{alg:SVRG}, which is the stochastic variance-reduced  gradient (SVRG) method~\cite{palaniappan2016stochastic,shi2017bregman}, to approximately solve 
\begin{equation}\label{subminmaxproblem}
\bar\bx^{(t+1)}\approx\argmin_{\bx \in \mathbb{R}^p}\max_{\by\in\mathbb{R}^q} \phi_{\gamma,\lambda_t}(\bx,\by;\bar\bx^{(t)},\bar\by^{(t)})
\end{equation}
with some $\lambda_t\in(0,+\infty]$ and $\bar\by^{(t)}=\argmin_{\by\in\Y}d_y(\by)$ for any $t$. The returned solution $\bar\bx^{(t+1)}$ will be used as the next iterate. 
Compared to \eqref{subminmaxproblem0}, problem \eqref{subminmaxproblem0finitesum} has an additional proximal term $-\frac{1}{\lambda}V_y(\by,\bar\by)$ introduced to make (\ref{subminmaxproblem0finitesum}) strongly convex and strongly concave so that \eqref{subminmaxproblem} can be solved efficiently with the SVRG method.  When $r$ is already strongly concave with respect to $V_y$, we will set $\lambda_t=+\infty$ for any $t$ and follow the convention that $\frac{1}{\lambda_t}=0$ so that $-\frac{1}{\lambda_t}V_y(\by,\bar\by^{(t)})$ will be removed from \eqref{subminmaxproblem} and thus \eqref{subminmaxproblem}  is reduced to \eqref{subminmaxproblem1}.

\begin{algorithm}[h]
	\caption{SVRG$(\bar\bx,K,\lambda,\gamma)$: SVRG Method for \eqref{subminmaxproblem0finitesum}}\label{alg:SVRG}
	\begin{algorithmic}[1]
		\STATE \textbf{Input:} $\bar\bx \in \mathcal{X}$, $K\geq 2$, $\lambda>0$, and $\gamma \in (0,1/(2\rho)]$.
		\STATE Set $\mu_x=\frac{1}{2\gamma}$, $\mu_y=\frac{1}{\lambda}+\mu$, $\Lambda=\frac{52\max\left\{L_x^2,L_y^2\right\}}{\min\left\{\mu_x^2,\mu_y^2\right\}}+\frac{3}{2}$, $\eta_x=\frac{1}{\mu_x\Lambda}$, $\eta_y=\frac{1}{\mu_y\Lambda}$, and $J=\left\lceil 1+(\frac{3}{2}+3\Lambda)\log(4)\right\rceil$.
		\STATE Set $\widehat\bx^{(0)}=\bar\bx$ and $\widehat\by^{(0)}=\argmin_{\by\in\Y}d_y(\by)$.
		\FOR {$k = 0,...,K-2$}
		\STATE 
		\small
		$\hbg_x^{(k)}=\nabla_xf(\widehat\bx^{(k)},\widehat\by^{(k)})~ \text{  and  }~ \hbg_y^{(k)}=\nabla_yf(\widehat\bx^{(k)},\widehat\by^{(k)})$.
		\normalsize
		\STATE Set $(\bx^{(0)},\by^{(0)})=(\widehat\bx^{(k)},\widehat\by^{(k)})$.
		\FOR {$j = 0,...,J -2$}
		\STATE Sample $l$ uniformly from $\{1,2,\dots,n\}$.
		\STATE Compute
		\small
		\begin{align*}
		\bg_x^{(j)}=\hbg_x^{(k)}-\nabla_x  f_l(\widehat\bx^{(k)},\widehat\by^{(k)})+\nabla_x f_l(\bx^{(j)},\by^{(j)})\\
		\bg_y^{(j)}=\hbg_y^{(k)}-\nabla_y  f_l(\widehat\bx^{(k)},\widehat\by^{(k)})+\nabla_y f_l(\bx^{(j)},\by^{(j)}).
		\end{align*}
		\normalsize
		\STATE Solve
		\small
		\begin{align*}
			\bx^{(j+1)}&=\argmin_{\bx\in\mathbb{R}^p}\left\{ 
			\begin{array}{l}
			\bx^\top\bg_x^{(j)} +\frac{1}{2\eta_x}\|\bx-\bx^{(j)}\|_2^2
			+\frac{1}{2\gamma}\|\bx-\bar\bx\|_2^2+g(\bx)
			\end{array}
			\right\}\\
			\by^{(j+1)}&=\argmin_{\by\in\mathbb{R}^q}
			\left\{ 
			\begin{array}{l}
			-\by^\top\bg_y^{(j)} +\frac{1}{\eta_y}V_y(\by,\by^{(j)})
			+\frac{1}{\lambda}V_y(\by,\bar\by)+r(\by)
				\end{array}
			\right\}.
		\end{align*}
		\normalsize
		\ENDFOR
		\STATE 	$ \widehat\bx^{(k+1)} =\bx^{(J-1)},\quad\widehat \by^{(k+1)} = \by^{(J-1)}$.
		\ENDFOR
		\STATE 
		\textbf{Output:} $\widehat\bx^{(K-1)}$
	\end{algorithmic}
\end{algorithm}

Theorem~\ref{thm:totalcomplexitySVRG} characterizes the time complexity for PG-SVRG to find a nearly $\epsilon$-stationary point. 
We define the time complexity as the total number of the stochastic gradients $(\bg_x^j,\bg_y^j)$ calculated and count the complexity of computing  $(\hbg_x^{(k)},\hbg_x^{(k)})$ as computing $n$ stochastic gradients. The proof and the exact expressions of $T$ are provided in Section~\ref{sec:prooftheorem2} in Appendix. 

\begin{theorem}
	\label{thm:totalcomplexitySVRG}
	Suppose Assumptions~\ref{assume:stochastic} and~\ref{assume:finitesum} hold. Let $T$ be the total number of iterates in Algorithm~\ref{alg:PGSVRG} and let $\tau$ be a random index uniformly sampled from $\{0,1,\dots,T-1\}$. Algorithm~\ref{alg:PGSVRG} ensures $\mathbb{E}[\|\nabla\psi_{\gamma} (\bar\bx^{(\tau)})\|^2]\leq \epsilon^2$ after $T=\tilde O(\epsilon^{-2})$ iterations with total time complexity of $\tilde {O}(n\epsilon^{-2})$ if $\mu>0$ and $\tilde {O}(n\epsilon^{-2}+\epsilon^{-6})$ if $\mu=0$.
\end{theorem}

\begin{algorithm}[h]
	\caption{PG-SVRG: Proximally Guided Stochastic Variance Reduced Gradient Method}\label{alg:PGSVRG}
	\begin{algorithmic}[1]
		\STATE \textbf{Input:} $(\bar\bx^{(0)},\bar\by^{(0)}) \in \mathcal{X}\times \mathcal{Y}^o$ and $\gamma \in (0,1/(2\rho)]$.  
		\FOR{$t=0,..., T-2$}
		\STATE  $\lambda_t=\left\{\begin{array}{ll}+\infty\text{ if }\mu>0 \\
		t+2\text{ if } \mu=0
		\end{array}
		\right.$, $\mu_x=\frac{1}{2\gamma}$, $\mu_y^t=\frac{1}{\lambda_t}+\mu$, $\Lambda_t=\frac{52\max\left\{L_x^2,L_y^2\right\}}{\min\left\{\mu_x^2,(\mu_y^t)^2\right\}}+\frac{3}{2}$.
		\STATE  
		\small
		$k_t =\left\lceil 1+ 4/3\log\big[9(t+1)^2\left(1/4+\Lambda_t/2\right)(\mu_xD_x^2+\mu_y^tD_y^2) \big]\right\rceil$.
		\normalsize
		\STATE $\bar\bx^{(t+1)}=\text{SVRG}(\bar\bx^{(t)},k_t,\lambda_t,\gamma)$.
		\ENDFOR
		\STATE Sample $\tau$ uniformly randomly form $\{0,1,\dots,T-1\}$.
		\STATE\textbf{Output:} 
		$ \bar\bx^{(\tau)}$
	\end{algorithmic}
\end{algorithm}


\section{Numerical Experiments}
\label{sec:exp}
In this section, we present numerical experiments to demonstrate the performance of our proposed methods. We first consider the distributionally robust learning problem in ~\eqref{eqn:P2} with smooth and non-smooth loss function $f_i$'s. 
Then, we will test the algorithms on the robust learning problem from multiple distributions~\eqref{eqn:P4} with non-smooth losses.

\subsection{Robust Learning with Smooth Loss}
\label{sec:expsmooth}
In this section, we consider learning from imbalanced data by solving the robust learning problem in~\eqref{eqn:P2} with $\mathcal{X}=\{\bx\in\mathbb{R}^d|\|\bx\|\leq d\}$ and $r(\by) = \theta D_{KL}(\by, \mathbf 1/n)$, where $D_{KL}$ is the KL-divergence between two vectors in the simplex $\Delta\subset\mathbb{R}^d$. We conduct two experiments with different non-convex smooth losses $f_i$'s in~\eqref{eqn:P2}.

In the first experiment, we learn a linear model $\bx$ with the loss formed by truncating the logistic loss for binary classification, i.e., $f_i(\bx)=f(\bx; \mathbf a_i, b_i)=\phi_\alpha(\ell(\bx; \mathbf a_i, b_i))$
where $\mathbf a_i\in\R^d$ denotes the feature vector, $b_i\in\{1,-1\}$ denotes its binary class label for $i=1,2,\dots,n$, 
$\ell(\bx; \mathbf a, b)=\log(1+\exp(-b\mathbf a^\top \bx))$, and $\phi_\alpha(s) =\alpha \log(1+s/\alpha)$ for  $\alpha=2$. A truncated loss makes it robust to outliers and noisy data~\cite{loh2017,DBLP:journals/corr/abs-1805-07880}, though it is not our goal to demonstrate the benefit of using truncation here. 
For this experiment, we focus on comparison with two baseline methods for solving~\eqref{eqn:P2}. Note that \eqref{eqn:P2} is also a special case of \eqref{eq:composite} with $h(\bz)=\max_{\by\in\Delta}\{\by^\top\bz-r(\by)\}$ and $\bc(\bx)=(f_1(\bx),...,f_n(\bx))^\top$. Hence, we compare our method with 
the proximal linear (PL) method \citep{Drusvyatskiy2018} that solves \eqref{eq:composite} using the updating scheme \eqref{eq:compositesub}.
\cite{Drusvyatskiy2018} suggested using the accelerated gradient method to solve~\eqref{eq:compositesub} which 
has a high computational cost when $n$ is large. Hence, we also use the SMD and SVRG methods to solve \eqref{eq:compositesub} to reduce the complexity of the PL method through sampling $\nabla f_i$. We denote the PL methods using the SMD and SVRG methods to solve \eqref{eq:compositesub} as PL-SMD and PL-SVRG, respectively.

\begin{table}[ht]
	\caption{Statistics Of Datasets Used}
	\vspace{-0.2in}
	\begin{center}
		{	\begin{tabular}{|l|c|c|c|c|}
				\hline
				Datasets & \#Train & \#Test & \#Feat. & \multicolumn{1}{l|}{- : +} \\ \hline
				covtype & 217594 & 30000 & 54 & 1:7.5 \\ \hline
				connect-4 & 29108 & 32000 & 126 & 1:5.3 \\ \hline
				protein & 10263 & 4298 & 357 & 1:4.0 \\ \hline
				rcv1.binary & 12591 & \multicolumn{1}{l|}{677399} & 47236 & 1:5.1 \\ \hline
		\end{tabular}}
	\end{center}
	\label{table:data-stats}
\end{table} 
\vspace{-0.2in}

We perform the comparisons on four imbalanced datasets from the LIBSVM library. Table~\ref{table:data-stats} contains information about the four datasets used in  the experiments. The information includes the sizes of training and testing sets, the number of features, and the ratio of negative instances to positive instances in training sets (testing tests are almost balanced). For datasets \emph{covtype} and \emph{connect-4}, 
we selected two imbalanced classes from the original data and split them appropriately into imbalanced training and balanced testing sets. 
Datasets \emph{protein} and \emph{rcv1.binary} were already split into training and testing sets, so we select two classes and randomly removed some negative instances from the training sets to make them imbalanced. 

In this experiment, we set $\theta=10$. Mini-batches of size $5$, $10$, $10$, and $200$ were chosen for \emph{protein}, \emph{rcv1.binary}, \emph{connect-4}, and \emph{covtype}, repectively, when we compute stochastic gradients in all methods. We implement PG-SMD in case D1. The values of some control parameters in the algorithms are selected from a discrete grid of candidate values using the objective value of \eqref{eqn:P2} at termination as the criterion. Those control parameters include $\eta$ in \eqref{eq:compositesub} for the two PL methods, $\gamma$ in both PG-SMG and PG-SVRG, the ratios $D_x/M_x$ and $D_y/M_y$ in the expressions of $\eta_x$ and $\eta_y$ in SMD as well as $J$, $K$, $\eta_x$ and $\eta_y$ in SVRG. 


{
	\begin{figure*}[t]
		\centering
		\begin{subfigure}
			\centering
			\includegraphics[scale=0.255]{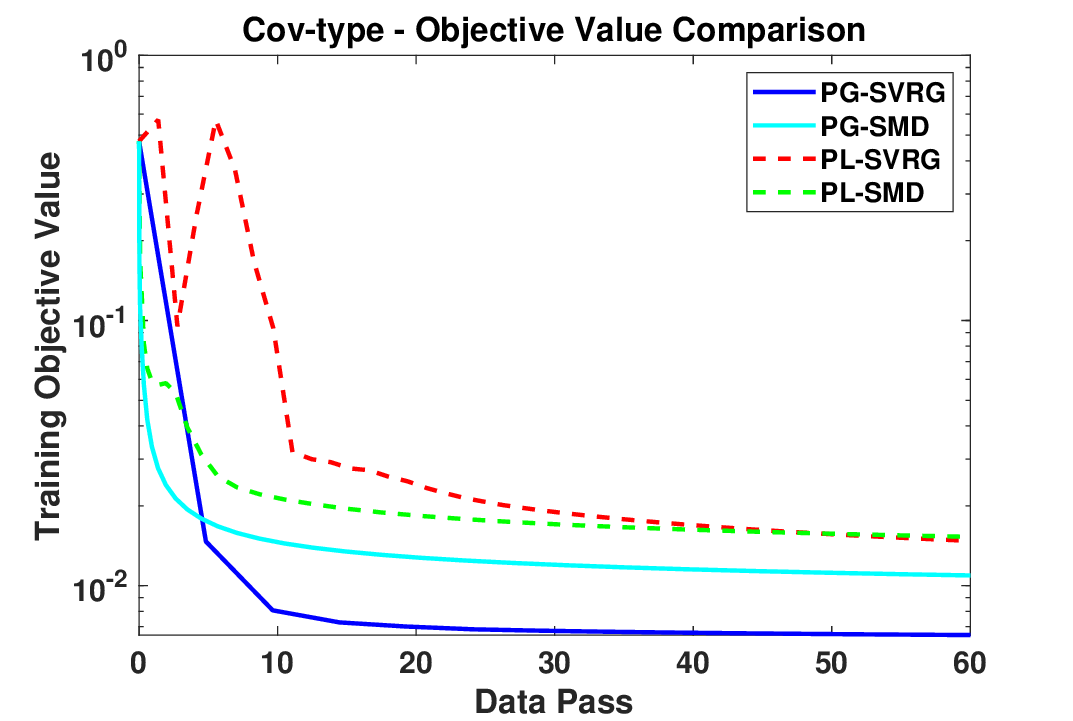}
		\end{subfigure}%
		\begin{subfigure}
			\centering
			\includegraphics[scale=0.255]{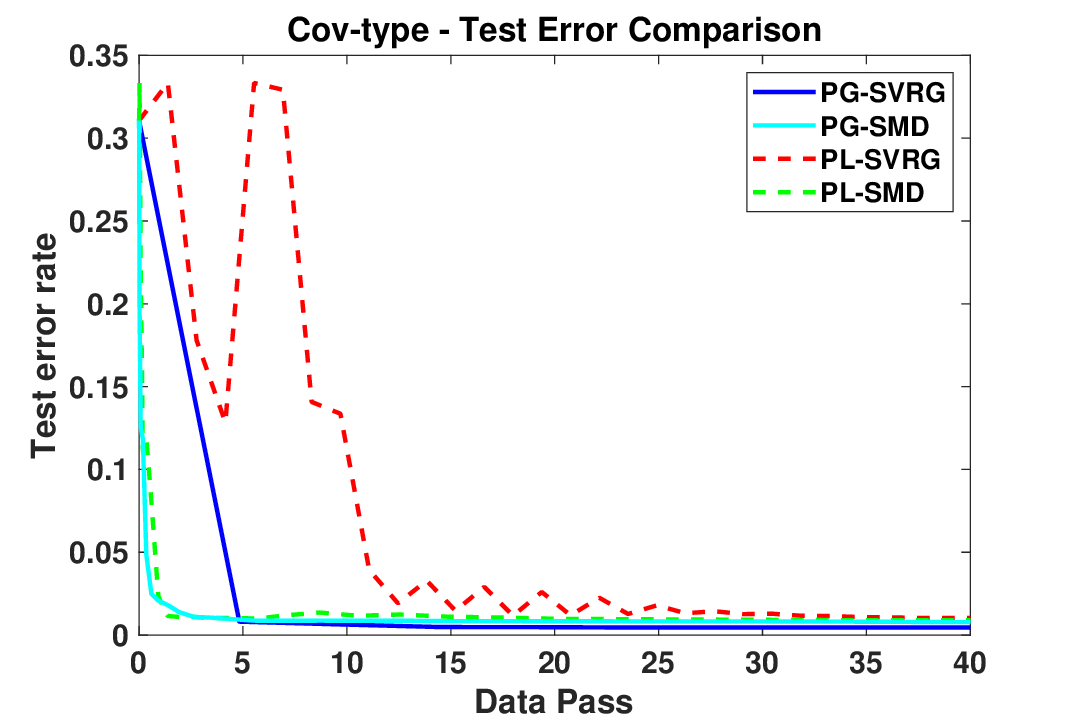}
		\end{subfigure}%
		\begin{subfigure}
			\centering
			\includegraphics[scale=0.255]{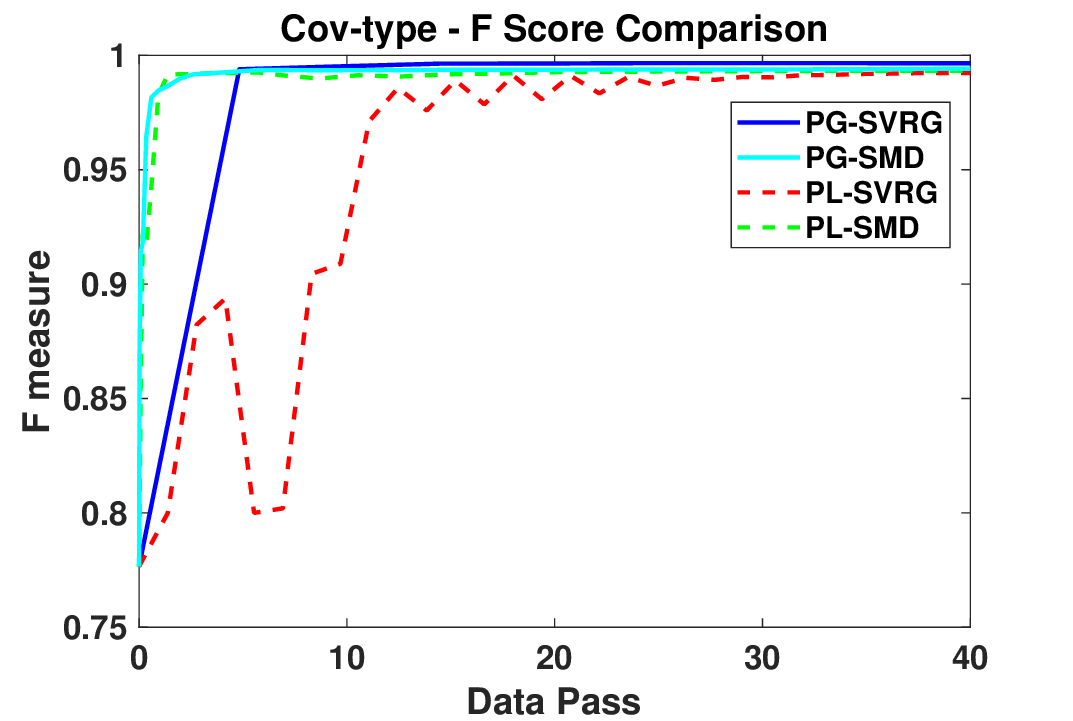}
		\end{subfigure}%
		\begin{subfigure}
			\centering
			\includegraphics[scale=0.25]{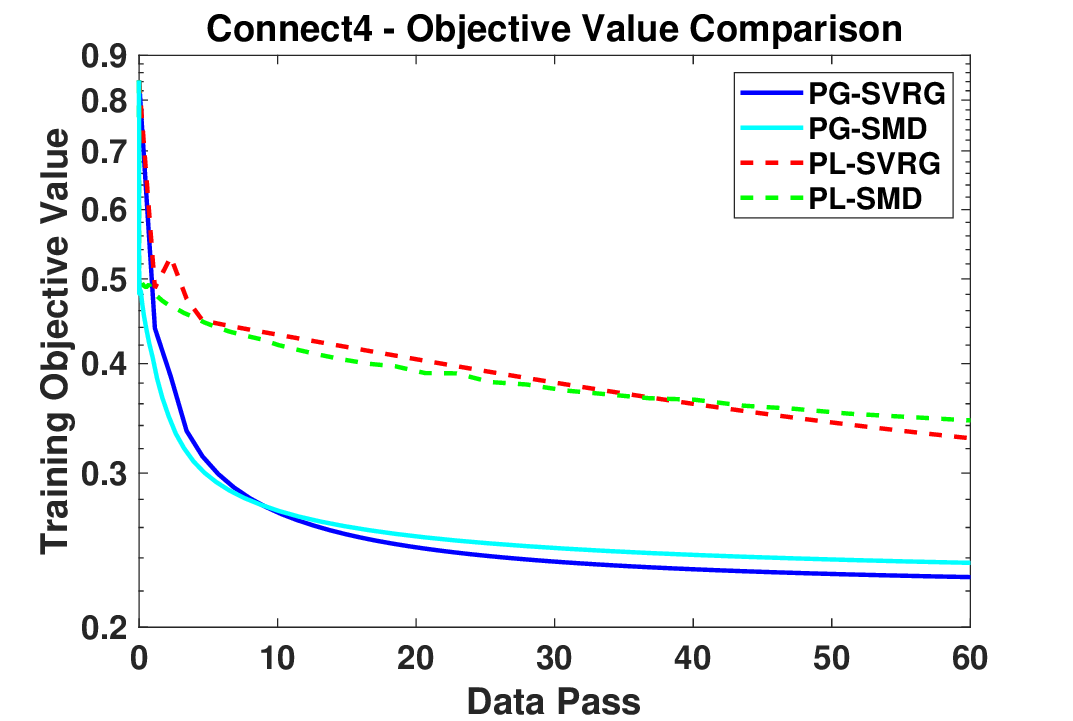}
		\end{subfigure}
		\begin{subfigure}
			\centering
			\includegraphics[scale=0.25]{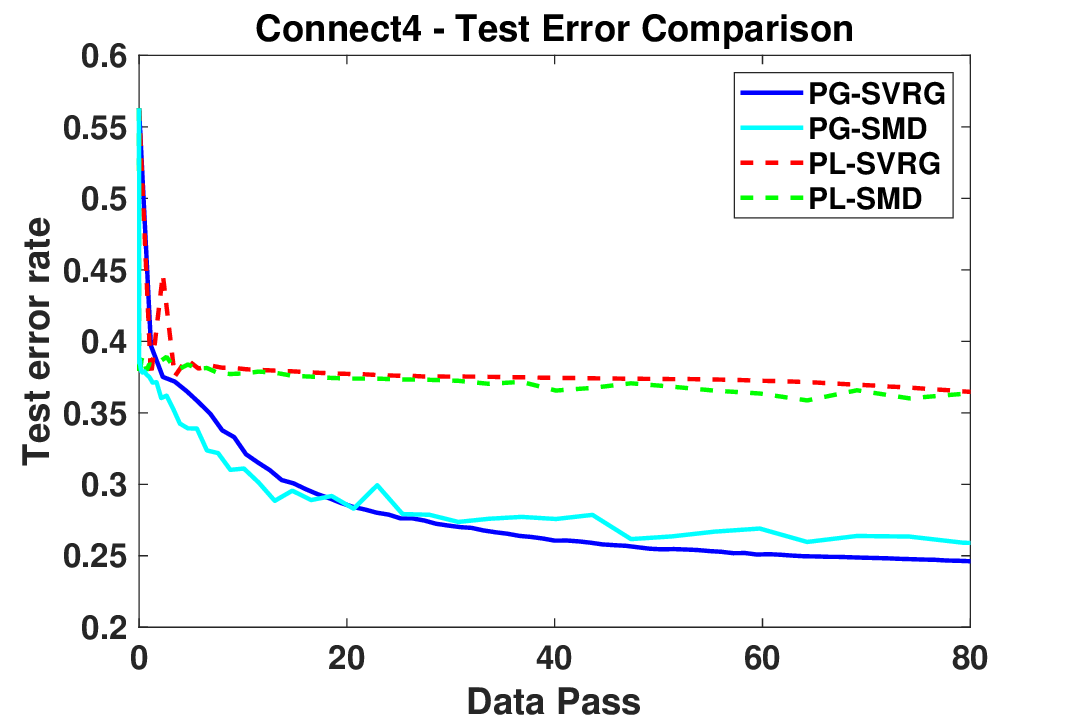}
		\end{subfigure}
		\begin{subfigure}
			\centering
			\includegraphics[scale=0.25]{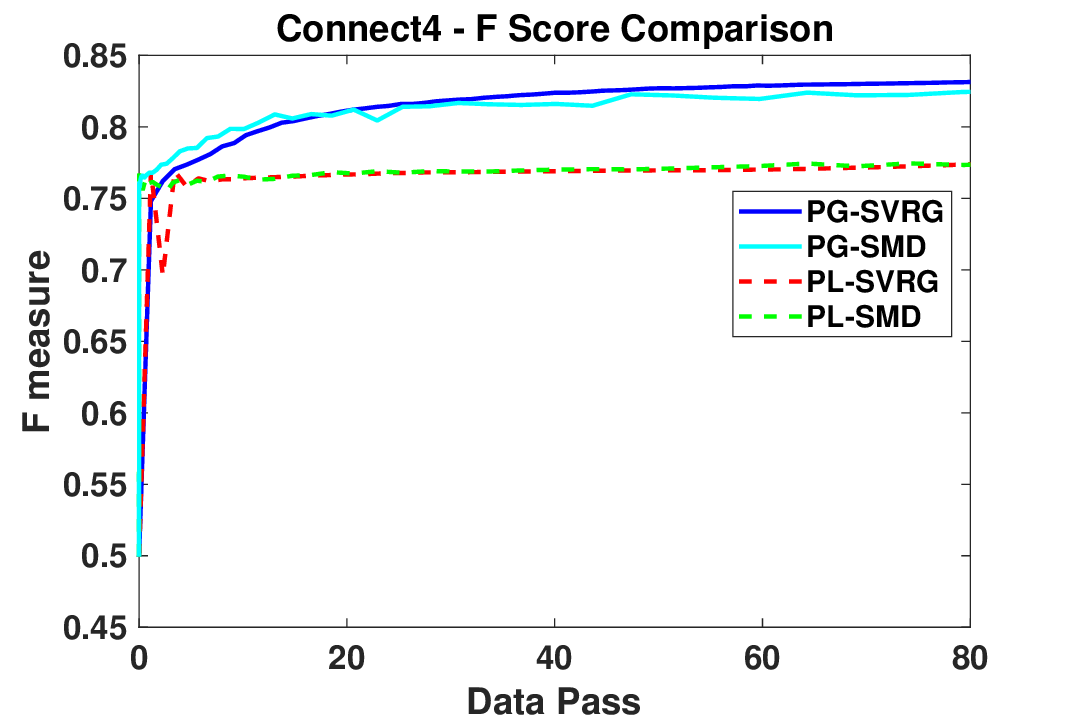}
		\end{subfigure}
		\begin{subfigure}
			\centering
			\includegraphics[scale=0.25]{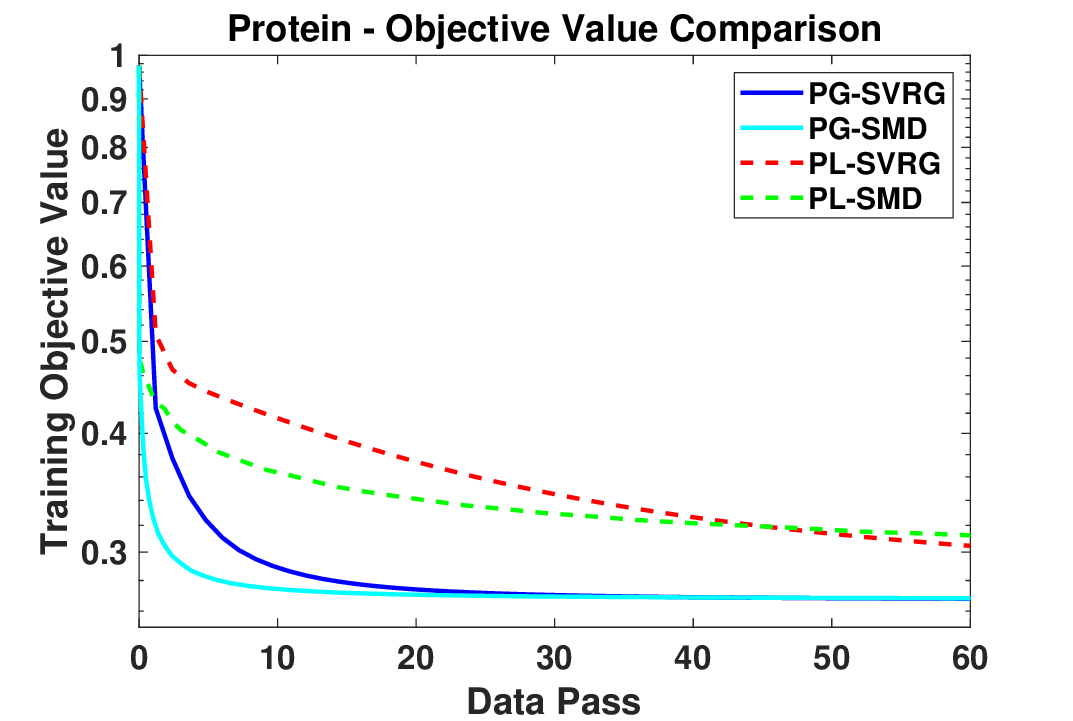}
		\end{subfigure}
		\begin{subfigure}
			\centering
			\includegraphics[scale=0.25]{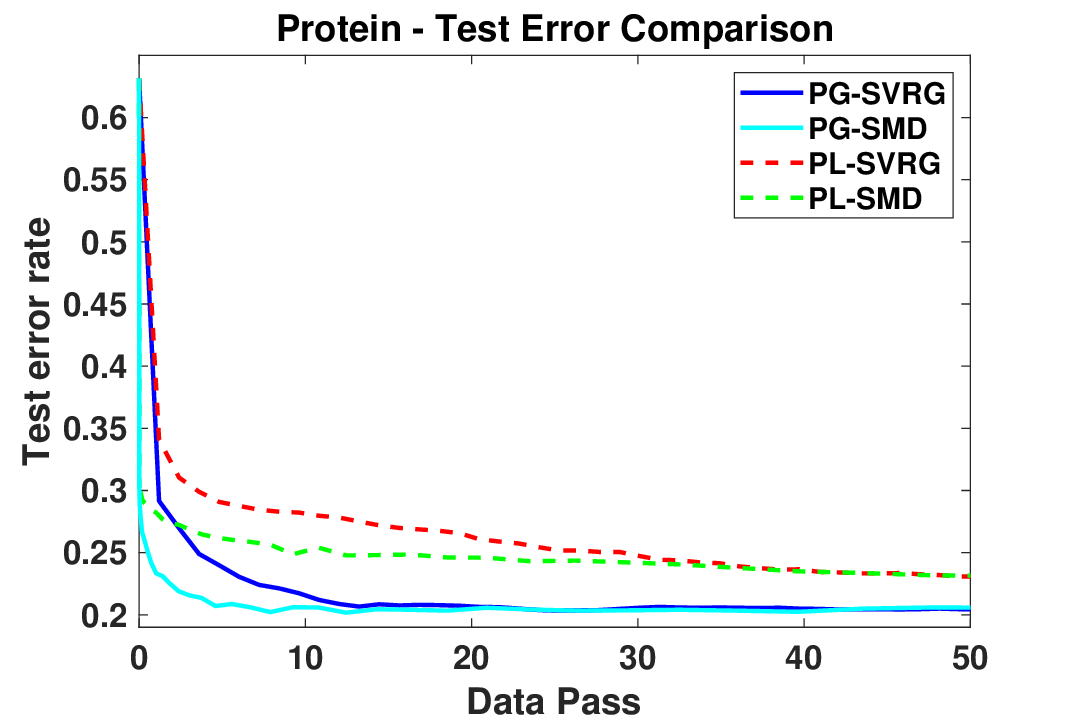}
		\end{subfigure}
		\begin{subfigure}
			\centering
			\includegraphics[scale=0.25]{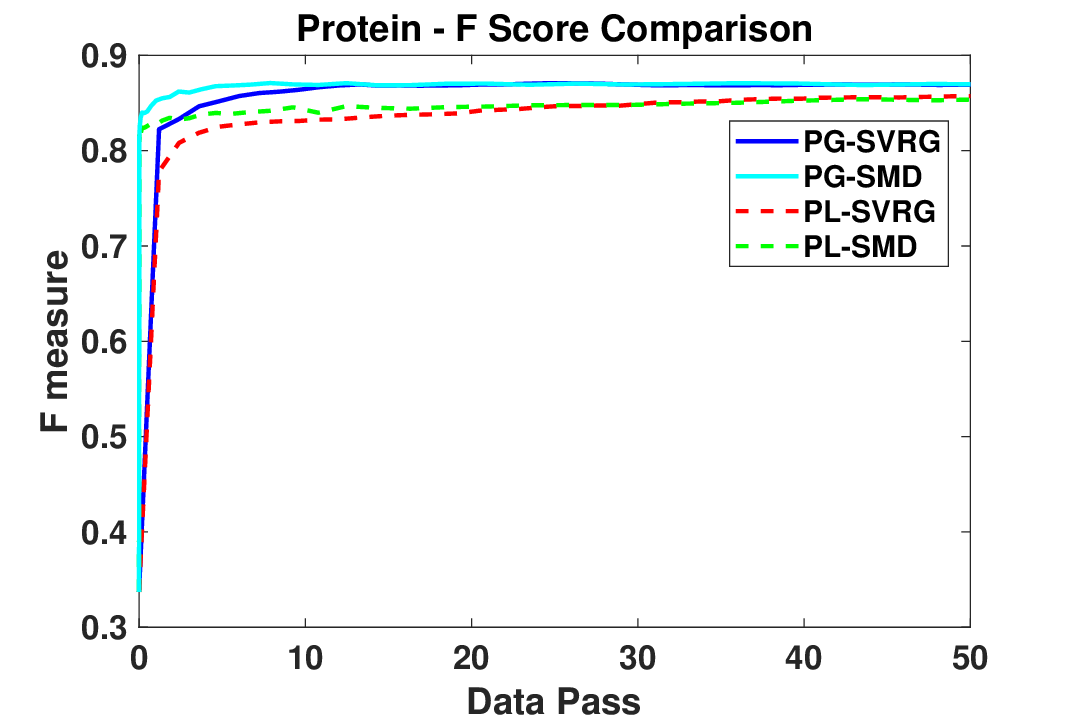}
		\end{subfigure}
		\begin{subfigure}
			\centering
			\includegraphics[scale=0.25]{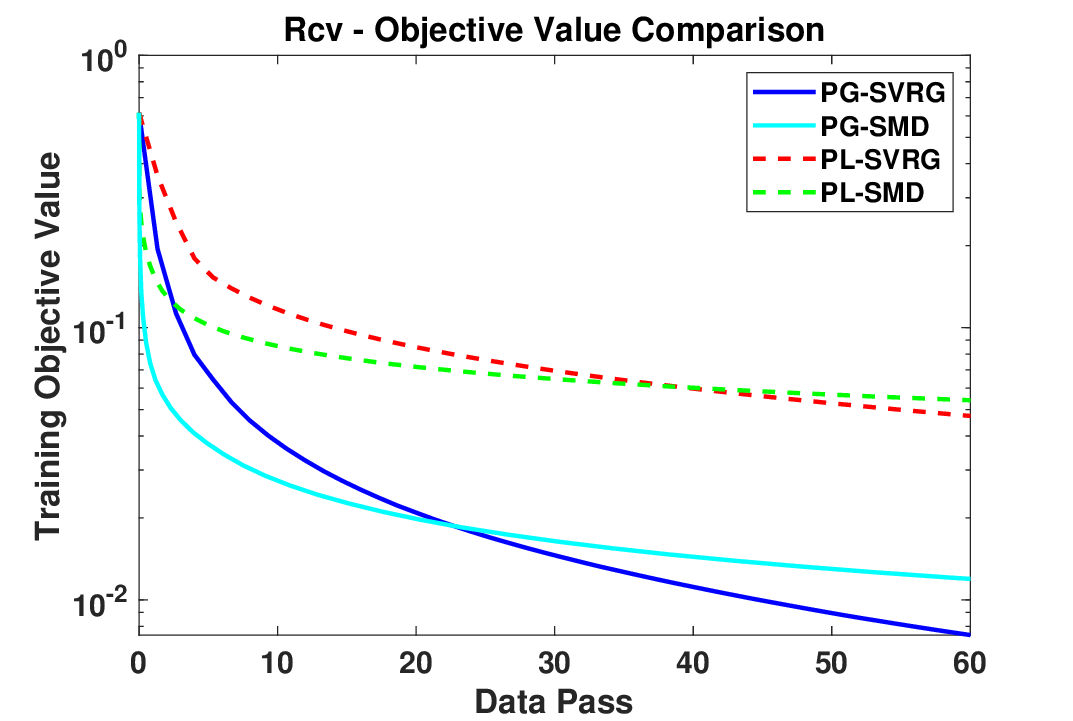}
		\end{subfigure}
		\begin{subfigure}
			\centering
			\includegraphics[scale=0.25]{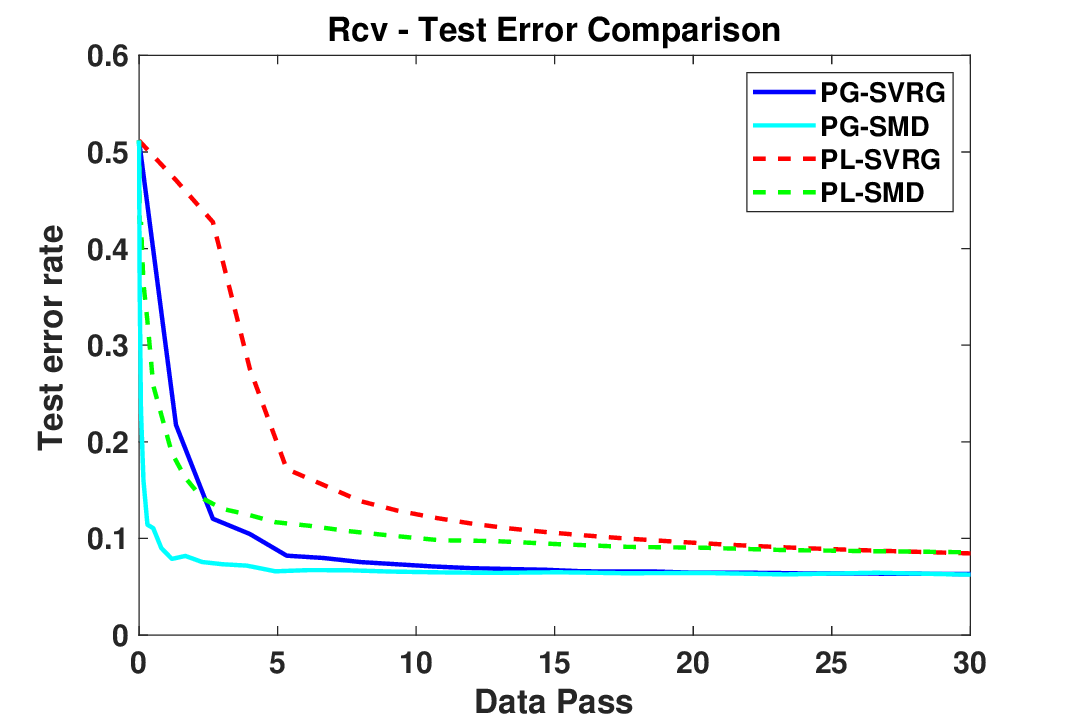}
		\end{subfigure}
		\begin{subfigure}
			\centering
			\includegraphics[scale=0.25]{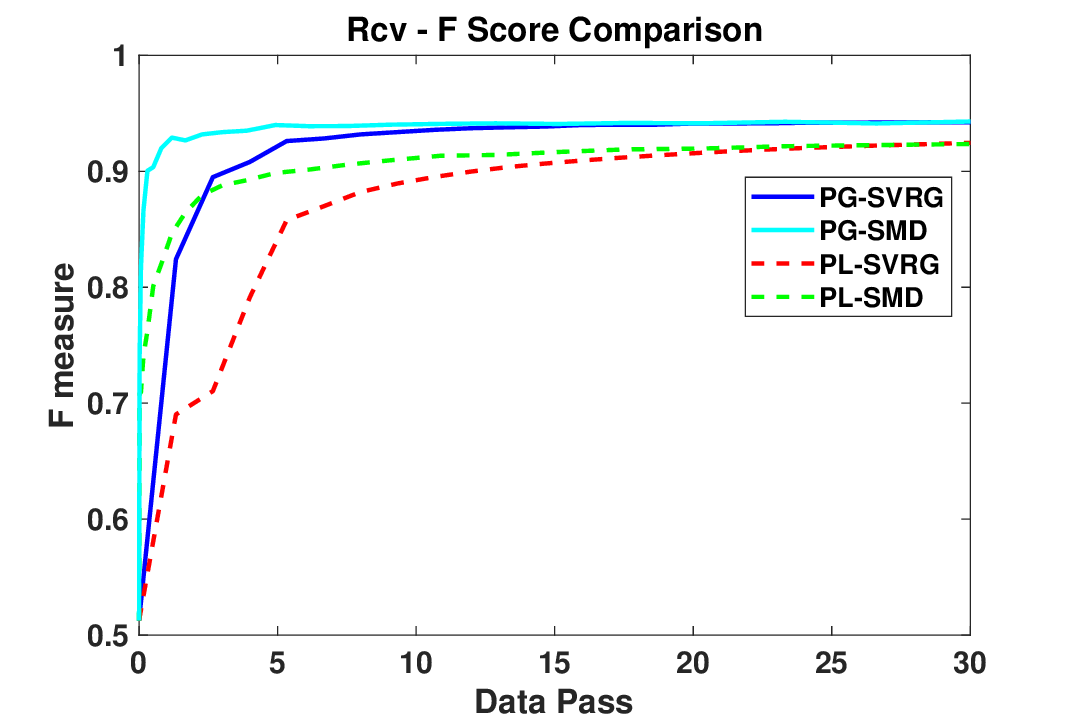}
		\end{subfigure}
		{\centering
			\caption{Comparisons of PL and PG methods on robust learning with imbalanced
				data and smooth loss. }\label{fig:simulations}}
	\end{figure*}
}

Figure \ref{fig:simulations} shows the objective value $\psi(\bx)$ of \eqref{eqn:P2}, testing classification error rate and F-score as the number of data passes increases. On these datasets, the proposed PG methods are more advantageous than the PL methods. This is mainly because the gradient information $\nabla \mathbf{c}(\bx^{(t)})$ (subject to stochastic noise) used in \eqref{eq:compositesub} in the PL methods is only updated in each outer loop while our PG methods update this in each inner loop, which ensures that the solutions are updated with the latest gradient information. 
Overall, PG-SVRG does well in the long run in reducing the objective value, while PG-SMD is faster in yielding a low test error rate. 
Additional comparisons are given in Figure~\ref{fig:additionaltime}, where we show the objective value (the training loss) obtained by the PL and PG methods at different wall-clock times in seconds. Figure~\ref{fig:additionaltime} also shows that the PG methods are more time efficient than the PL methods.

\begin{figure*}[t!]
	\centering
	\includegraphics[scale=0.30]{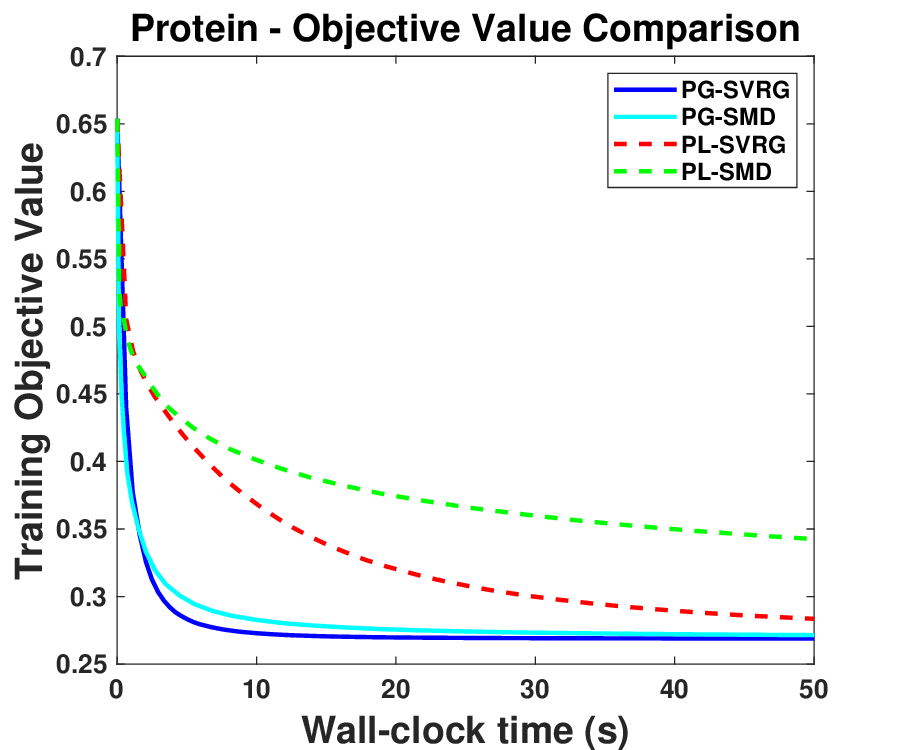}
	\includegraphics[scale=0.30]{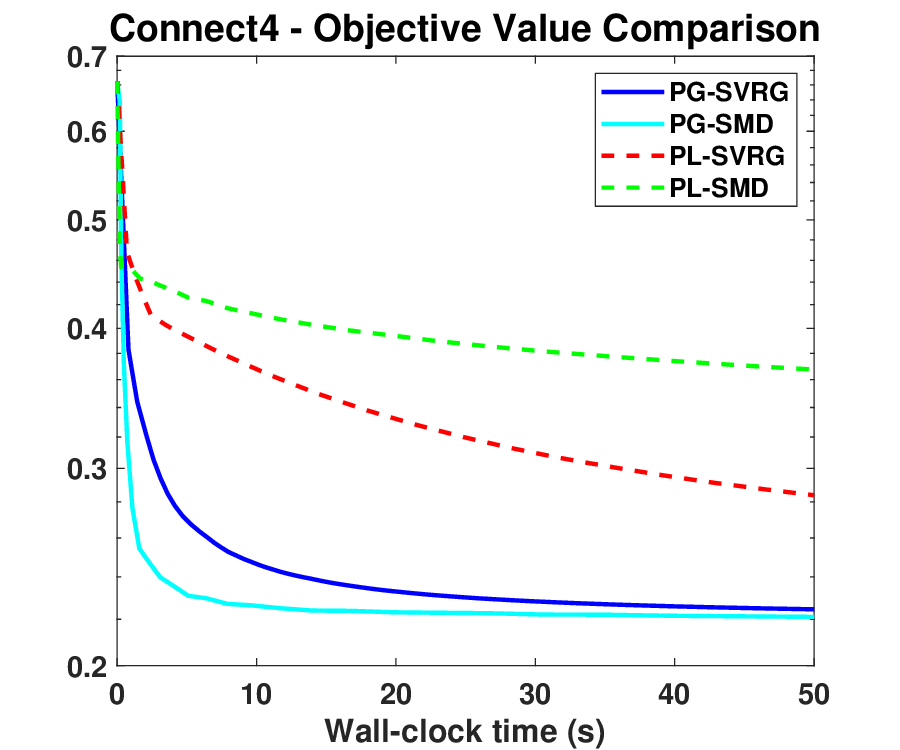}
	\includegraphics[scale=0.30]{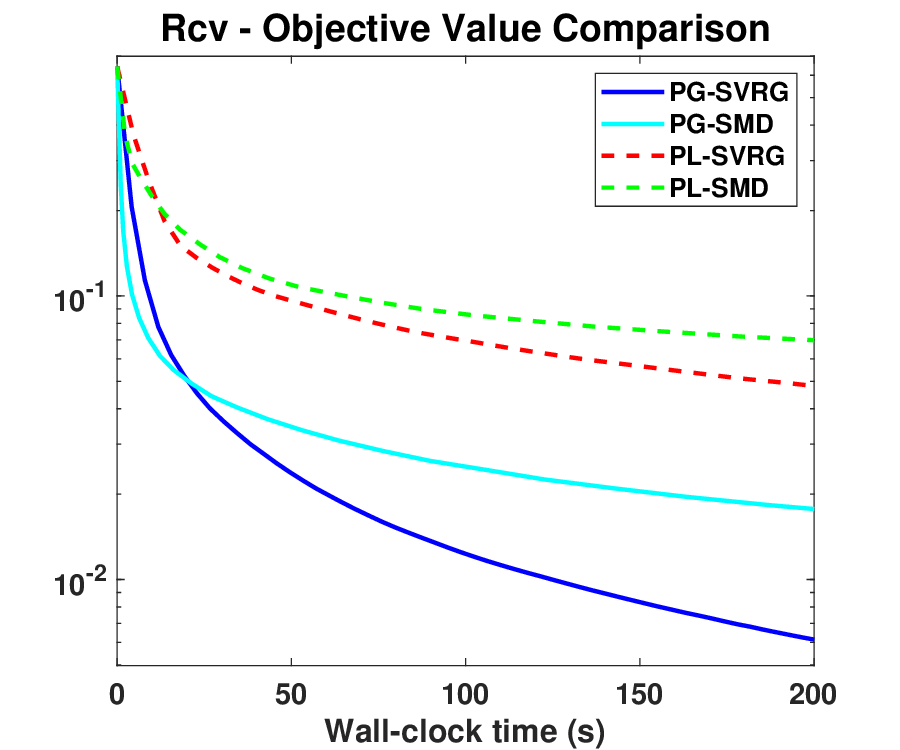}
	\vspace{-0.1in}
	\caption{Comparisons of PL and PG methods in objective value at different wall-clock times in seconds on robust learning with imbalanced data and smooth loss.}
	\label{fig:additionaltime}
	\vspace{-0.15in}
\end{figure*}




\begin{figure*}[t!]
	\centering
	\includegraphics[scale=0.35]{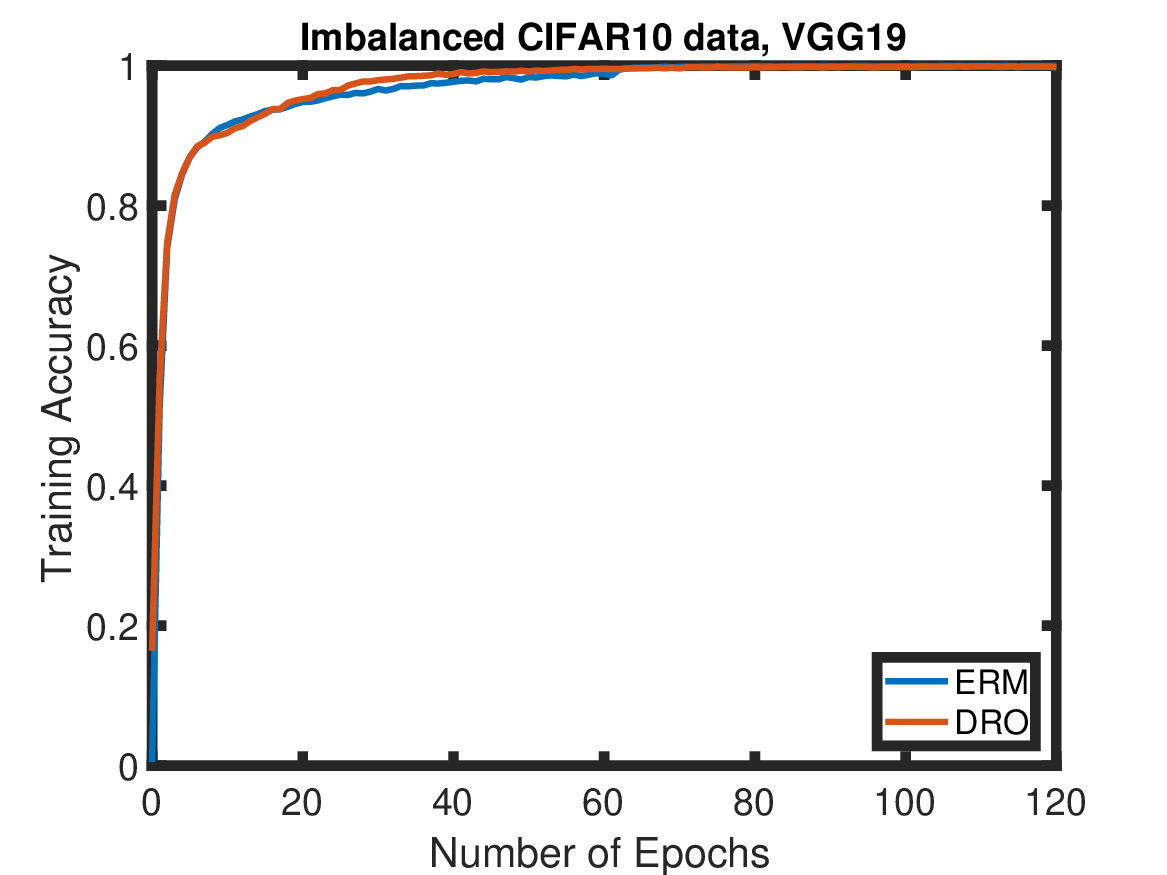}
	\includegraphics[scale=0.35]{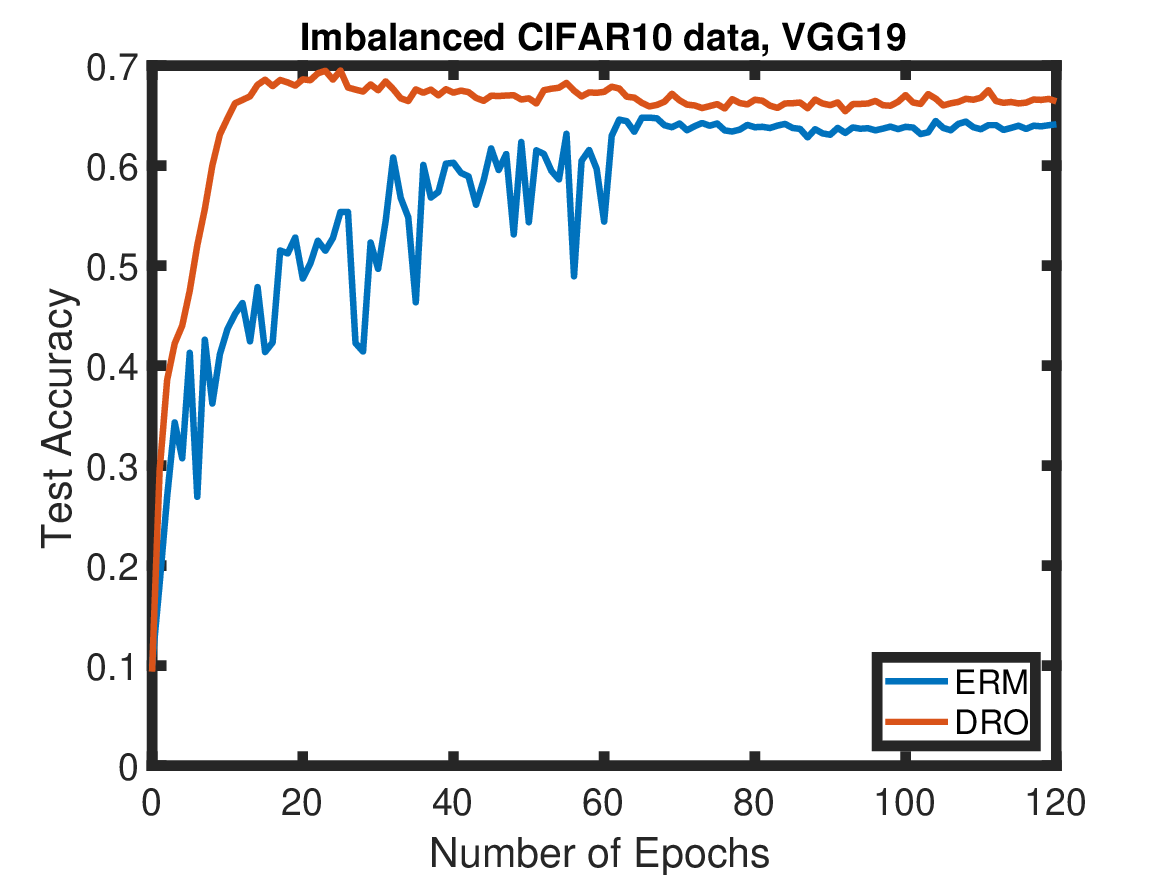}\\
	\includegraphics[scale=0.35]{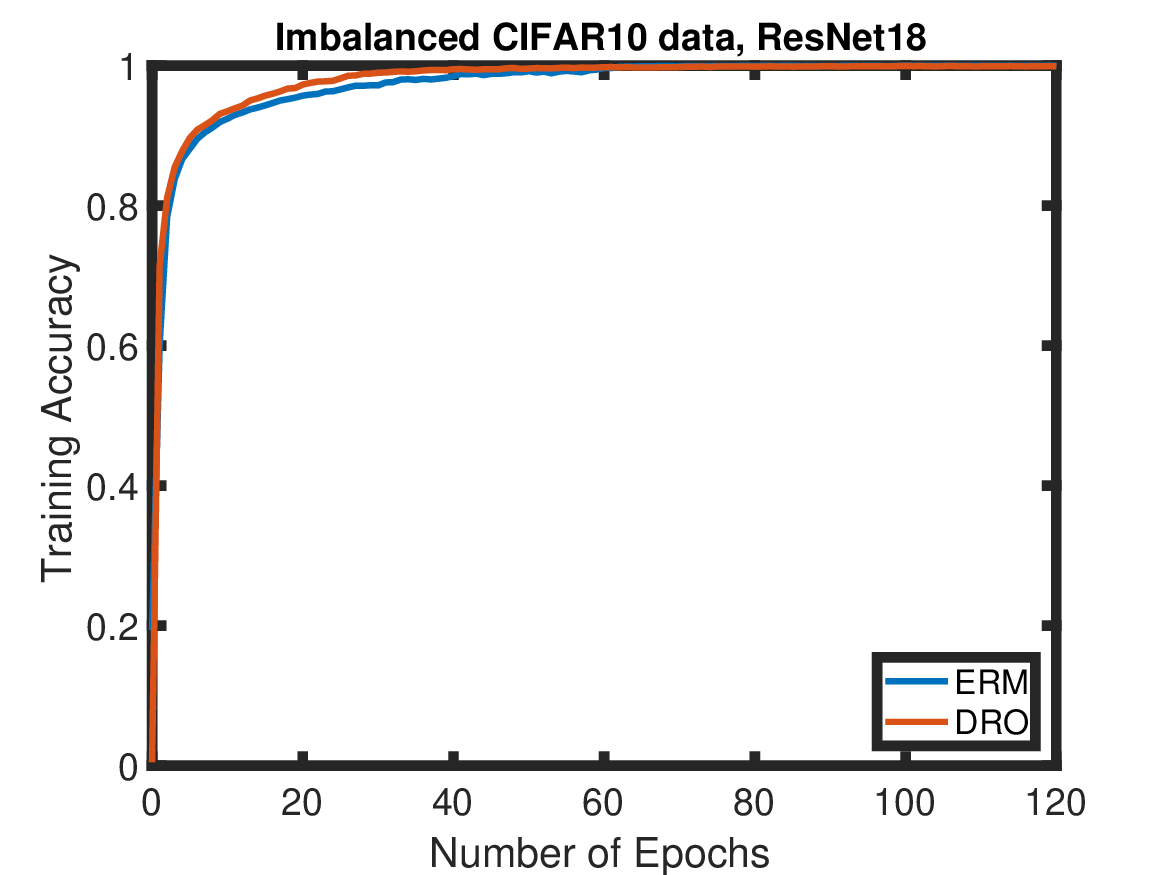}
	\includegraphics[scale=0.35]{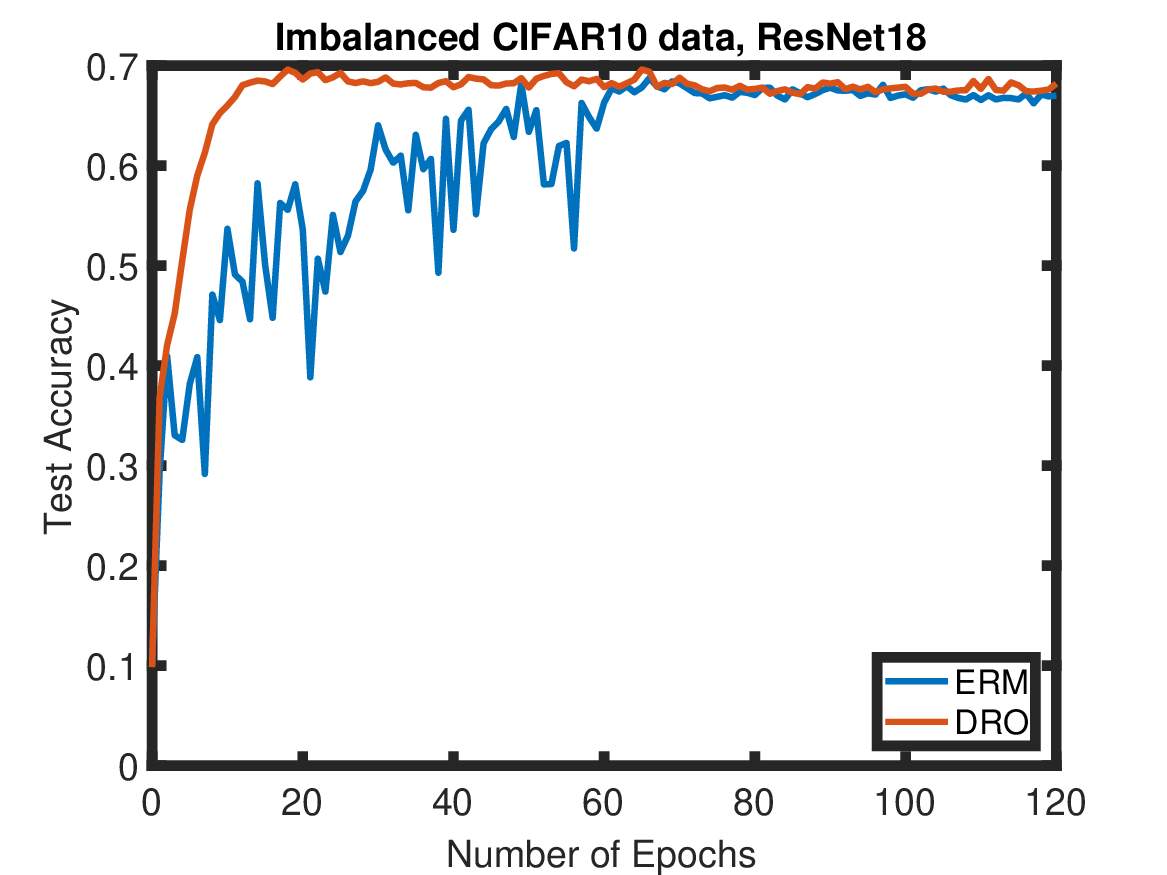}
	\vspace{-0.1in}
	\caption{Comparison of ERM and DRO for robust learning ResNet18 and VGG19 networks on imbalanced data.}
	\label{fig:1}
\end{figure*}

In the second experiment, we learn a non-linear model with the loss defined by a deep neural network. For this experiment, we focus on demonstrating the power of the proposed method for improving the generalization performance. We do that by solving the distributionally robust optimization (DRO) (\ref{eqn:P2}) in comparison with traditional empirical risk minimization (ERM) by SGD.  
To this end, we use an imbalanced CIFAR10 data and two popular deep neural networks (ResNet18~\citep{he2016deep} and VGG19~\citep{simonyan2014very}). The original training data of CIFAR10 has ten classes, each of which has 5000 images. We remove 4900 images from five classes to make the training set imbalanced. The test data remains intact.

\begin{wrapfigure}{h!}{0.5\textwidth}
	\begin{center}
		\includegraphics[scale=0.4]{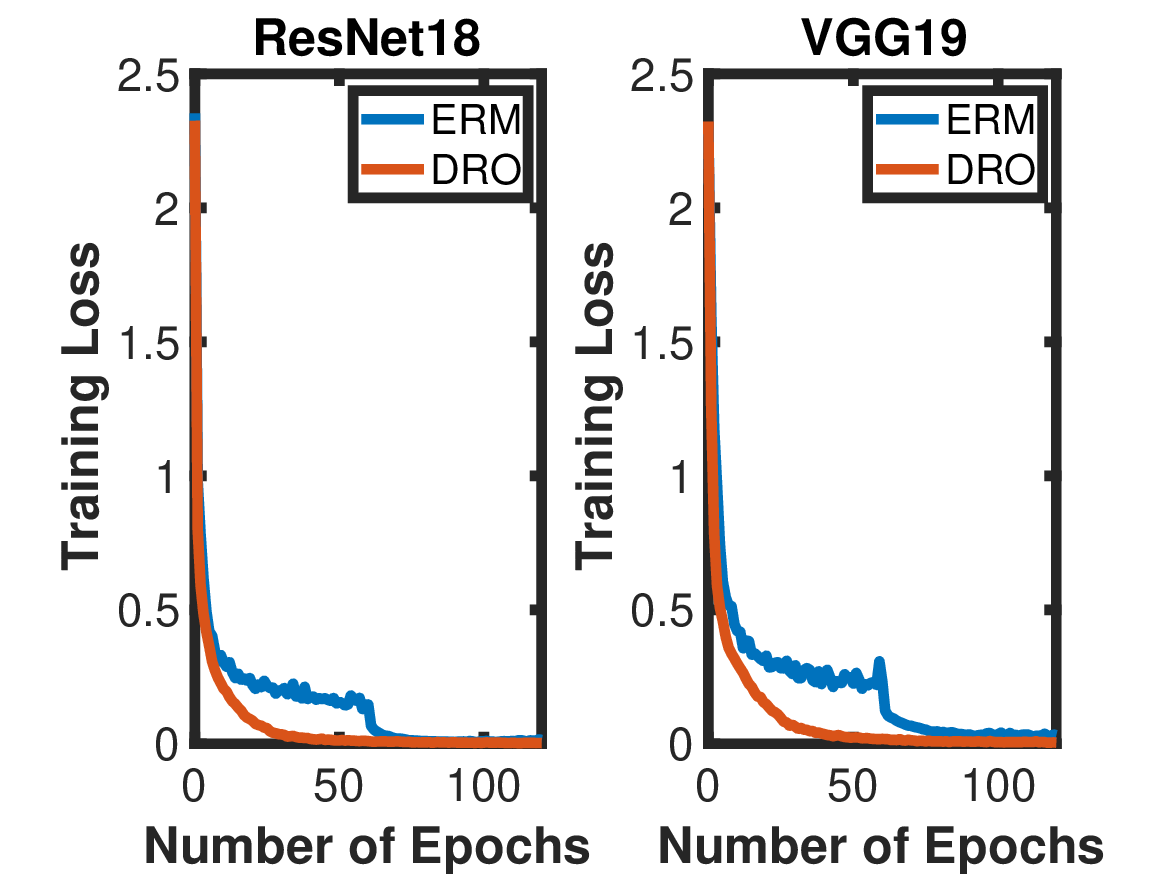}
		\vspace{-0.1in}
		\caption{Comparison of ERM and DRO models in training loss at different epochs for robust learning of ResNet18 and VGG19 networks on imbalanced data.}
		\label{fig:additional}
	\end{center}
	\vspace{-0.15in}
\end{wrapfigure}

The value of $\theta$ is fixed to $5$ in this experiment, and the mini-batch size is fixed to 128 for both methods.   We use SGD to solve ERM with a stepsize of 0.1 for epochs $1\sim 60$ and $0.01$ for epochs $61\sim 120$ following the practical training strategy~\citep{he2016deep}, where one epoch means one pass of data. We implement PG-SMD in case D1 using the similar practical training strategy except that we choose $(\eta_x,\eta_y)=(0.1, 10^{-5})$, $(0.05, 5\times 10^{-6})$, $(0.03, 3\times 10^{-6})$, $(0.025, 2.5\times 10^{-6})$ in epochs $1\sim 5$, $6\sim 25$, $26\sim 70$, $70\sim 120$ respectively. $\gamma$ is selected by grid search as in the first experiment.
The curves for training and testing accuracy are plotted in Figure~\ref{fig:1}, which show that a robust optimization scheme is considerably better than ERM when dealing with imbalanced data in these two specific examples. Additional comparison is given in Figure~\ref{fig:additional}, where we show the training loss (the objective value) obtained by the ERM and DRO methods at different epochs.


\subsection{Robust Learning with Non-Smooth Loss}
\label{sec:expnonsmooth}
In this section, we perform numerical experiments on the same robust learning problem as in Section~\ref{sec:expsmooth} except that we use non-smooth loss function $f_i$'s in \eqref{eqn:P2} this time. In particular, we consider the loss function for binary classification defined by a two-layer fully-connected neural network where the loss in the output layer is calculated using the hinge loss. Let $\mathbf a_i\in\R^d$ denote the feature vector and $b_i\in\{1,-1\}$ denote its binary class label for $i=1,2,\dots,n$. Let $p$ be the number of neurons in the hidden layer and $\bx:=(\mathbf{X}_1,\bx_2)$ be the weights with $\mathbf{X}_1\in\mathbb{R}^{p\times d}$ and $\bx_2\in\mathbb{R}^{p}$. Then the loss from the $i$th data point is defined as 
\begin{eqnarray}
\label{eq:annloss}
f_i(\bx)=f(\bx; \mathbf a_i, b_i)=\max\left(1-b_i\sigma(\mathbf{X}_1\mathbf a_i)^\top\bx_2,0\right), 
\end{eqnarray}
where $\sigma(z):\mathbb{R}^p\rightarrow \mathbb{R}^p$ is the component-wise sigmoid function, i.e., 
\begin{eqnarray}
	\label{eq:sigmoid}
	\sigma(z)=\left(\frac{1}{1+\exp(-z_i)}\right)_{i=1}^p.
\end{eqnarray}
It can be proved that \eqref{eqn:P2} with this loss function is still weakly convex in $\bx$. However, since $f_i$'s are non-smooth, we cannot apply the PG-SVRG, PL-SMD, and PL-SVRG methods in the previous subsection to \eqref{eqn:P2}. An alternating stochastic gradient descent (Alter-SGD) method is proposed in \cite{boct2020alternating}, which can be applied to \eqref{eqn:P2} with non-smooth $f_i$'s. Hence, we will focus on comparing PG-SMD and Alter-SGD. 

We perform the comparisons on imbalanced datasets \emph{covtype} and \emph{connect-4} whose construction is described in Section~\ref{sec:expsmooth} and statistics can be found in Table~\ref{table:data-stats}. In this experiment, we set $\theta=10$ and $p=10$. Mini-batches of size $200$ and $100$ were chosen for \emph{covtype} and \emph{connect-4}, repectively, when we compute stochastic gradients in both methods. We implement PG-SMD in case D1. The values of control parameters, including  $\gamma$ and the ratios $D_x/M_x$ and $D_y/M_y$ in PG-SMG as well as $\eta_x$ and $\eta_y$ in PG-SMD and Alter-SGD, are selected in the same way as in Section~\ref{sec:expsmooth}.

Figure \ref{fig:1nonsmooth} shows the objective value $\psi(\bx)$ of \eqref{eqn:P2}, testing classification error rate and F-score as the number of data passes increases. The proposed PG-SMD method is a little more advantageous than the Alter-SGD method on these specific examples. Overall, PG-SMD reduces the objective value slightly faster than Alter-SGD when the number of data passes is small.

\begin{figure*}[t!]
	\centering
	\includegraphics[scale=0.33]{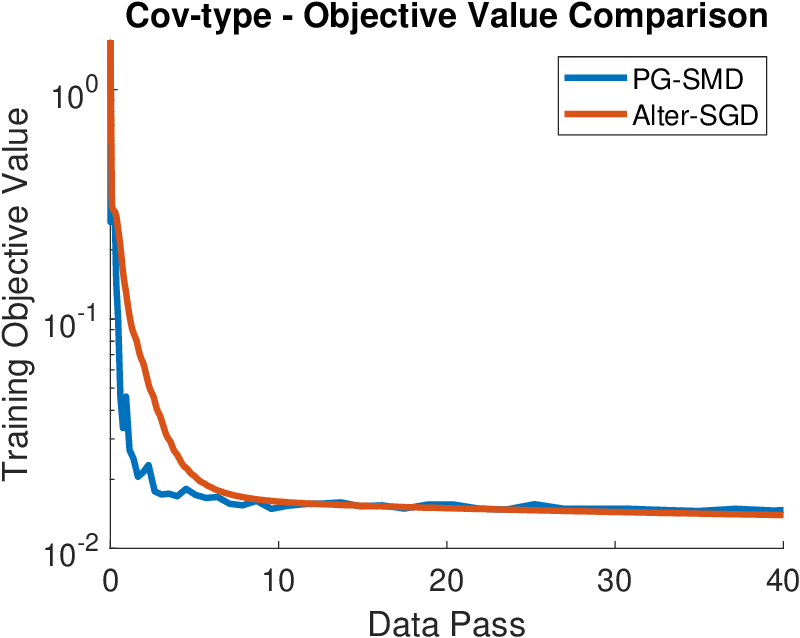}
	\includegraphics[scale=0.33]{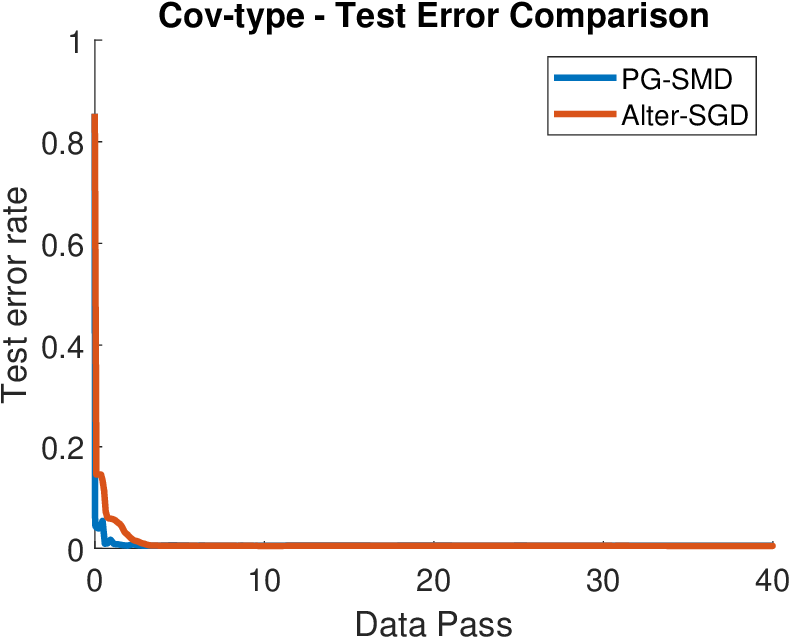}
	\includegraphics[scale=0.33]{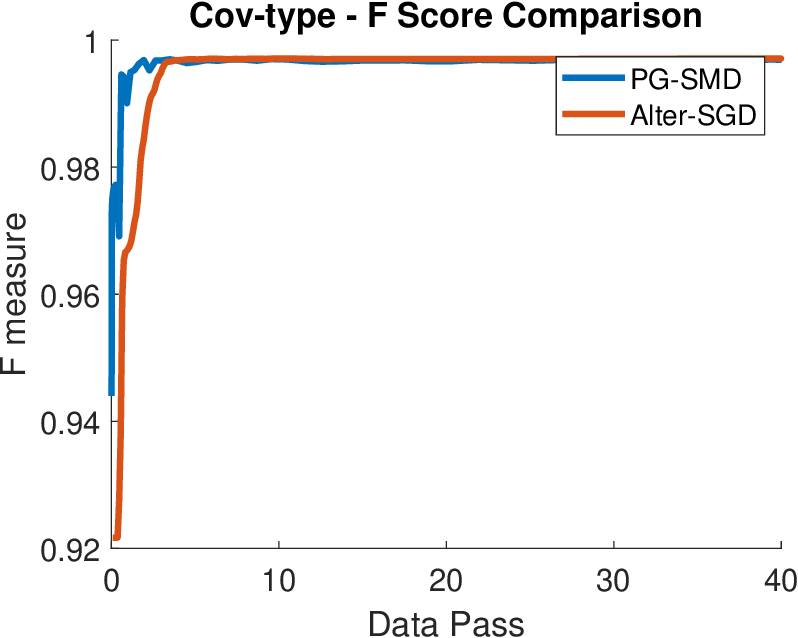}\\
	\includegraphics[scale=0.33]{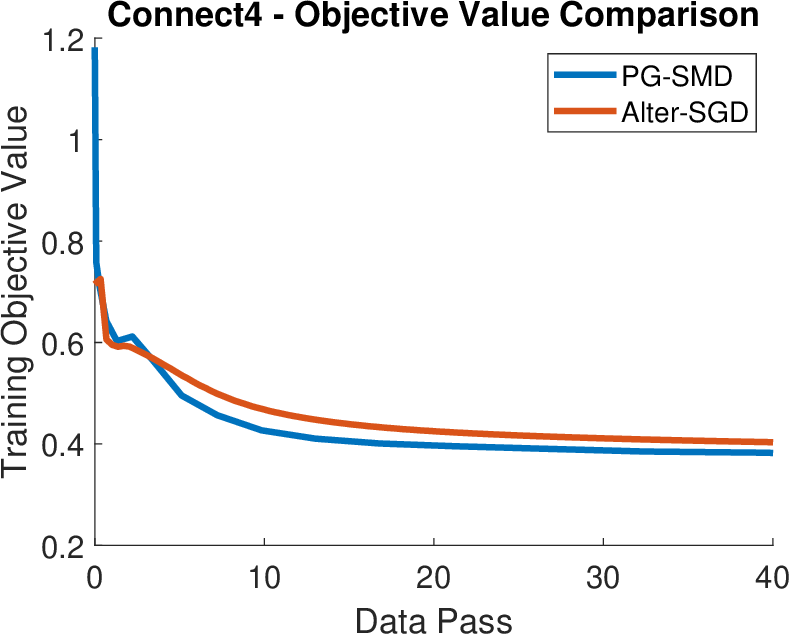}
	\includegraphics[scale=0.33]{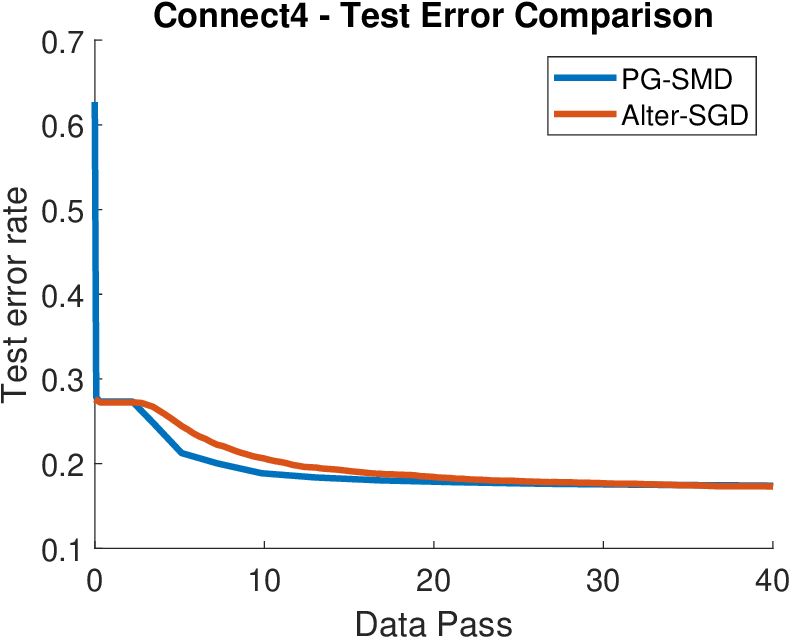}
	\includegraphics[scale=0.33]{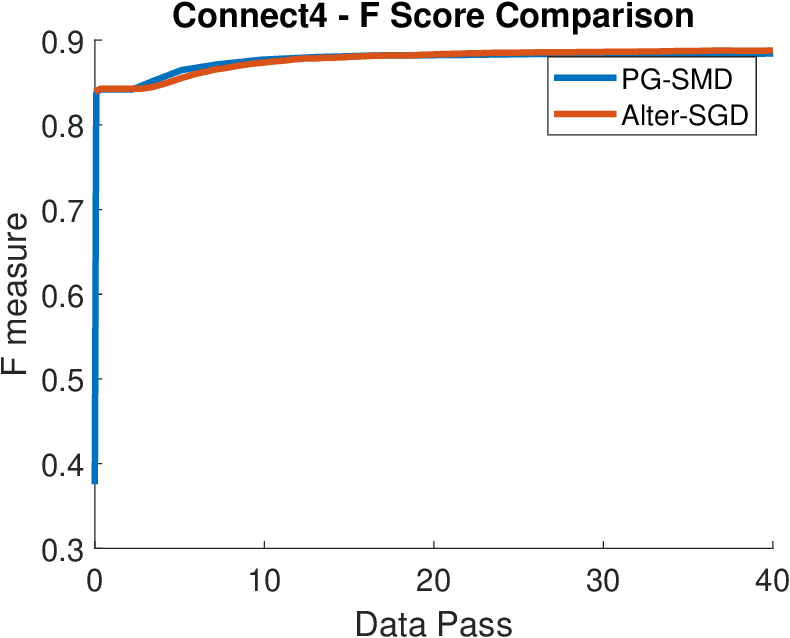}
	\vspace{-0.1in}
	\caption{Comparisons of PG-SMD and Alter-SGD methods on robust learning with imbalanced
		data and non-smooth loss. }
	\label{fig:1nonsmooth}
\end{figure*}

\subsection{Robust Learning from Multiple Distribution}
\label{sec:expexp}
In this section, we present the numerical experiments on the robust learning problem from multiple distributions described in \eqref{eqn:P4} with $m=5$ different data distributions $P_i$, $i=1,\dots,5$. We consider the case when $P_i$'s are continuous distributions and we generate $P_i$'s by simulation. 

Let $\bxi=(\mathbf a,b)$ be a random data point from one of the five distributions, where $\mathbf a=(a_1,\dots,a_{50})^\top\in\mathbb{R}^{50}$ is the feature vector and $b\in\{-1,1\}$ is its label. If $\mathbf a$ is generated from $P_i$, let the coordinates $(a_{10(i-1)+1},a_{10(i-1)+2},\cdots,a_{10i})$ and $(a_{10(j-1)+1},a_{10(j-1)+2},\cdots,a_{10j})$ be generated independently from a uniform distribution on $[-1,1]$, where $j= i+1$ if $i\leq 4$ and $j= 1$ if $i= 5$. Then let the remaining coordinates be zeros. Next, we generate the binary label $b$ corresponding to $\mathbf a$ using a two-layer fully-connected neural network. To do that, we first randomly generate the true weights $\bx^*=(\mathbf{X}_1^*,\bx_2^*)$ with $\mathbf{X}_1^*\in\mathbb{R}^{10\times 50}$ and $\bx_2^*\in\mathbb{R}^{10}$ and each entry of $\mathbf{X}_1^*$ and $\bx_2^*$ generated independently from a uniform distribution on $[-1,1]$. Then, we fix $\bx^*$ and, for each generated feature vector $\mathbf a$, we generate a noise $z$ from a uniform distribution on $[-0.01,0.01]$ and generate the label $b$ as 
$$
b=\text{sign}\left(\sigma(\mathbf{X}_1^*\mathbf a)^\top\bx_2^*+z\right).
$$
With the data distribution $\bxi=(\mathbf a,b)\sim P_i$ for $i=1,\dots,5$ defined in this way, we solve \eqref{eqn:P4} with $\mathcal{X}=\{\bx=(\mathbf{X}_1,\bx_2)| \|\mathbf{X}_1\|_F\leq 50,~\|\bx_2\|_2\leq 50 \}$ and $F(\bx; \bxi)=f(\bx; \mathbf a, b)$, where $f(\bx; \mathbf a, b)$ is defined as in \eqref{eq:annloss}. 

It can be proved that \eqref{eqn:P4} with this loss function is weakly convex in $\bx$. However, since $F$ is non-smooth and each $f_i$ is defined with an expectation rather than a finite sum, we cannot apply PG-SVRG or the PL methods, so we will again focus on comparing PG-SMD and Alter-SGD. A mini-batch of size $20$ was used when we compute stochastic gradients in both methods. Each sample $\bxi=(\mathbf a,b)$ in a mini-batch is generated using $\bx^*$ and the aforementioned procedure. The values of control parameters of both methods are selected in the same way as in Section~\ref{sec:expnonsmooth}. Besides the mini-batches used in optimization, we generate $5000$ data points from each $P_i$ separately and use them to closely approximate the objective value $\psi(\bx)$ of \eqref{eqn:P4}, testing classification error rate and F-score. These values are shown in Figure \ref{fig:1expect} as the number of data passes increases. Similar to \eqref{fig:1nonsmooth}, the proposed PG-SMD method is better than the Alter-SGD method in this specific example. 

\begin{figure*}[t!]
	\centering
	\includegraphics[scale=0.33]{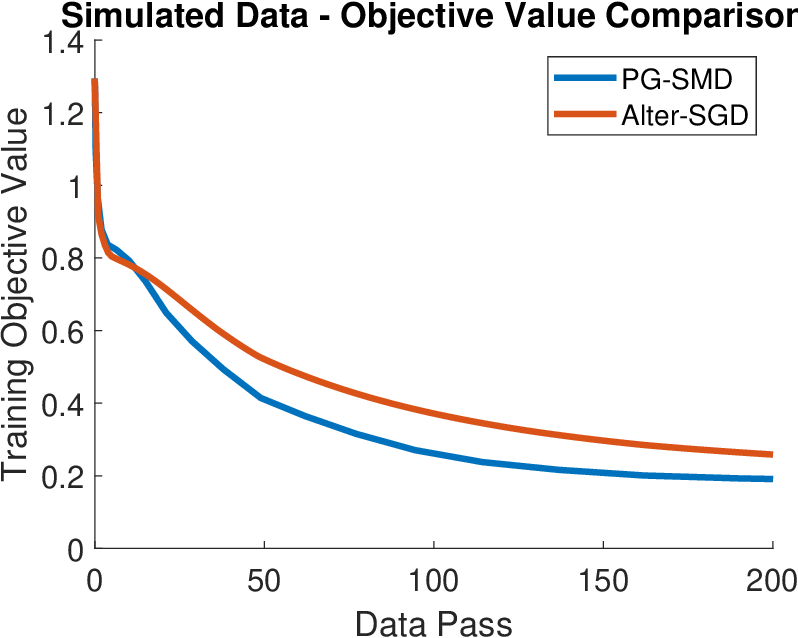}
	\includegraphics[scale=0.33]{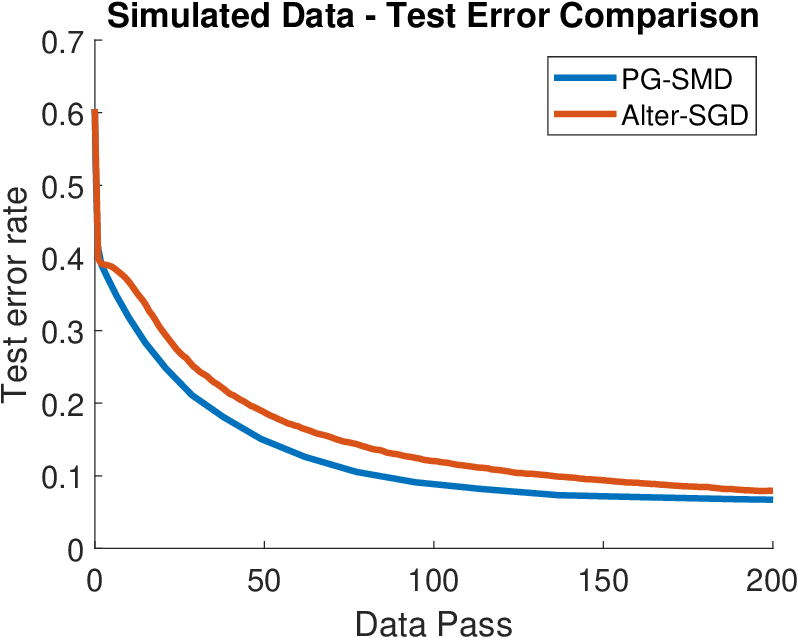}
	\includegraphics[scale=0.33]{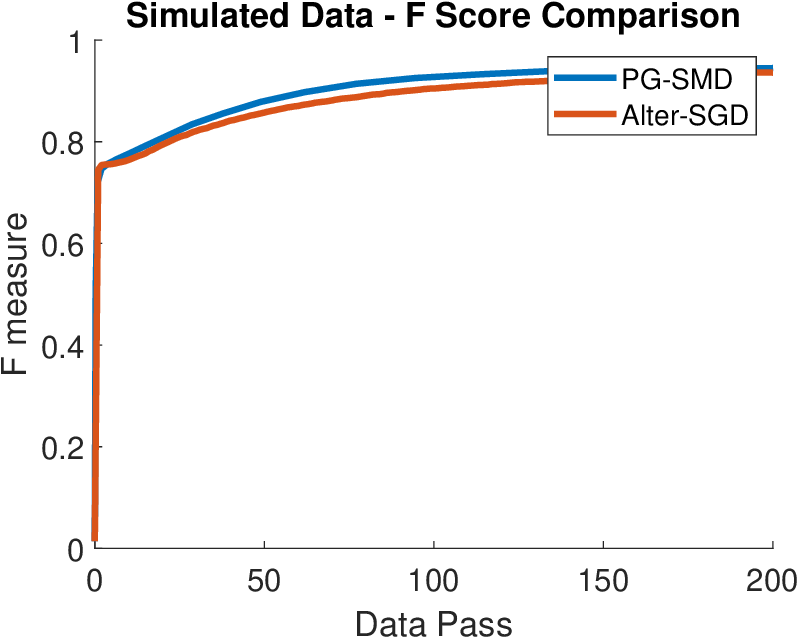}
	\vspace{-0.1in}
	\caption{Comparisons of PG-SMD and Alter-SGD methods on robust learning from multiple distributions with non-smooth loss. }
	\label{fig:1expect}
\end{figure*}


\section{Conclusion}
This paper contributes to the numerical solvers for non-convex min-max problems. We focus on a class of weakly-convex-concave min-max problems where the minimization part is weakly convex, and the maximization part is concave. This class of problems covers many important applications, such as robust learning. We consider two different scenarios: (i) the min-max objective function involves expectation, and (ii) the objective function is smooth and has a finite-sum structure. Stochastic mirror descent method and stochastic variance-reduced gradient method are developed to solve the subproblem in scenario (i) and (ii), respectively. We analyze the computational complexity of our methods for finding a nearly $\epsilon$-stationary for the original problem under both scenarios. Numerical experiments on distributionally robust learning with non-convex losses demonstrate the effectiveness of the proposed methods. 


\bibliographystyle{tfs}
\bibliography{references}


\appendix

\section{Proof of Theorem~\ref{thm:totalcomplexitySD}}
\label{sec:prooftheorem1}
We first present the convergence result of SMD for solving (\ref{subminmaxproblem0}) proved by \cite{nemirovski2009robust}. The proof is well-known and included only for the sake of completeness.
\begin{proposition}[Convergence of Algorithm~\ref{alg:SMD}]
	\label{dualgapresult}
	Suppose Assumptions~\ref{assume:stochastic} and~\ref{assume:stochastic1} hold. 
	Algorithm $\ref{alg:SMD}$ guarantees 
	\small
	\begin{align}
	\nonumber
	&~~~\mathbb{E}\bigg[\frac{1-\gamma\rho}{2\gamma}\|\bar\bx-\bx_\dagger\|_2^2\bigg]\leq \mathbb{E}\left[\max_{\by\in\mathbb{R}^q}\phi_{\gamma} (\widehat\bx,\by;\bar\bx)-\max_{\by\in\mathbb{R}^q}\phi_{\gamma} (\bx_\dagger,\by;\bar\bx)\right]	\\\label{eq:mainineq1}
	\leq&~~~\frac{5\eta_xM_x^2}{2}+\frac{5\eta_yM_y^2}{2}+\frac{1}{J}\left[\left(\frac{1}{\eta_x}+\frac{\rho}{2}\right)\|\bx_\dagger-\bar\bx\|_2^2+\frac{2}{\eta_y}\mathbb{E}V_y(\widehat\by_*(\widehat\bx),\bar\by)+Q_g+Q_r\right],
	\end{align}
	\normalsize
	where  $\bx_\dagger=\text{prox}_{\gamma\psi}(\bar\bx)$ and $\widehat\by_*(\widehat\bx)\in\argmin_{\by\in\mathbb{R}^q}\phi_{\gamma}(\widehat\bx,\by;\bar\bx)$.
\end{proposition}
\begin{proof}
	For simplicity of notation, we write $\phi_{\gamma}(\bx,\by;\bar\bx)$ as  $\phi(\bx,\by)$ in this proof and define
	$$
	\bar g(\bx):=\frac{1}{2\gamma}\|\bx-\bar\bx\|_2^2+g(\bx).
	$$
	Next, we will perform the standard analysis of SMD (e.g. the proof of Proposition 3.2 in \cite{nemirovski2009robust}).
	According to the updating equations of $(\bx^{(j+1)},\by^{(j+1)})$, we have 
	\small
	\begin{eqnarray*}
		&&(\bx^{(j+1)})^\top\bg_x^{(j)} +\frac{1}{2\eta_x}\|\bx^{(j+1)}-\bx^{(j)}\|_2^2+\bar g(\bx^{(j+1)})+\left(\frac{1}{2\eta_x}+\frac{1}{2\gamma}\right)\|\bx-\bx^{(j+1)}\|_2^2\\
		&\leq&
		\bx^\top\bg_x^{(j)} +\frac{1}{2\eta_x}\|\bx-\bx^{(j)}\|_2^2+\bar g(\bx)\\
		&&-(\by^{(j+1)})^\top\bg_y^{(j)} +\frac{1}{\eta_y}V_y(\by^{(j+1)},\by^{(j)})+r(\by^{(j+1)})+\frac{1}{\eta_y}V_y(\by,\by^{(j+1)})\\
		&\leq&
		-\by^\top\bg_y^{(j)} +\frac{1}{\eta_y}V_y(\by,\by^{(j)})+r(\by).
	\end{eqnarray*}
	\normalsize
	Summing these two inequalities and organizing terms imply
	\small
	\begin{eqnarray}
	\nonumber
	&&(\bx^{(j)}-\bx)^\top\mathbb{E}\bg_x^{(j)} -(\by^{(j)}-\by)^\top\mathbb{E}\bg_y^{(j)}
	+(\bx^{(j)}-\bx)^\top\bdelta_x^{(j)}-(\by^{(j)}-\by)^\top\bdelta_y^{(j)}\\\nonumber
	&& +\frac{1}{2\eta_x}\|\bx^{(j+1)}-\bx^{(j)}\|_2^2+\frac{1}{\eta_y}V_y(\by^{(j+1)},\by^{(j)})+(\bx^{(j+1)}-\bx^{(j)})^\top\bg_x^{(j)}-(\by^{(j+1)}-\by^{(j)})^\top\bg_y^{(j)}\\\nonumber
	&\leq&
	\frac{1}{2\eta_x}\|\bx-\bx^{(j)}\|_2^2-\left(\frac{1}{2\eta_x}+\frac{1}{2\gamma}\right)\|\bx-\bx^{(j+1)}\|_2^2+\bar g(\bx)-\bar g(\bx^{(j+1)})\\\label{eq:sgdproof1}
	&&+\frac{1}{\eta_y}V_y(\by,\by^{(j)})-\frac{1}{\eta_y}V_y(\by,\by^{(j+1)})+r(\by)-r(\by^{(j+1)}),
	\end{eqnarray}	
	\normalsize
	where  
	$$
	\bdelta_x^{(j)}:=\bg_x^{(j)}-\mathbb{E}\bg_x^{(j)}\quad\text{ and }\quad
	\bdelta_y^{(j)}:=\bg_y^{(j)}-\mathbb{E}\bg_y^{(j)}.
	$$
	By Young's inequality, we can show that
	\small
	\begin{eqnarray*}
		&&-(\bx^{(j+1)}-\bx^{(j)})^\top\bg_x^{(j)}+(\by^{(j+1)}-\by^{(j)})^\top\bg_y^{(j)}\\
		&\leq&\frac{1}{2\eta_x}\|\bx^{(j+1)}-\bx^{(j)}\|_2^2+\frac{\eta_x\|\bg_x^{(j)}\|_2^2}{2}
		+\frac{1}{2\eta_y}\|\by^{(j+1)}-\by^{(j)}\|^2+\frac{\eta_y\|\bg_y^{(j)}\|_*^2}{2}\\
		&\leq&\frac{1}{2\eta_x}\|\bx^{(j+1)}-\bx^{(j)}\|_2^2+\frac{\eta_xM_x^2}{2}
		+\frac{1}{\eta_y}V_y(\by^{(j+1)},\by^{(j)})+\frac{\eta_yM_y^2}{2},
	\end{eqnarray*}	
	\normalsize
	where, in the last inequality, we use Assumption~\ref{assume:stochastic1}B and the strong convexity of $d_y(\by)$.
	Adding both sides of this inequality to those of \eqref{eq:sgdproof1} yields
	\small
	\begin{eqnarray}
	\nonumber
	&&(\bx^{(j)}-\bx)^\top\mathbb{E}\bg_x^{(j)} -(\by^{(j)}-\by)^\top\mathbb{E}\bg_y^{(j)}+(\bx^{(j)}-\bx)^\top\bdelta_x^{(j)}-(\by^{(j)}-\by)^\top\bdelta_y^{(j)}	\\\nonumber
	&\leq& \frac{\eta_xM_x^2}{2}+\frac{1}{2\eta_x}\|\bx-\bx^{(j)}\|_2^2-\left(\frac{1}{2\eta_x}+\frac{1}{2\gamma}\right)\|\bx-\bx^{(j+1)}\|_2^2-\bar g(\bx)+\bar g(\bx^{(j+1)})\\\nonumber
	&&+\frac{\eta_yM_y^2}{2}+\frac{1}{\eta_y}V_y(\by,\by^{(j)})-\frac{1}{\eta_y}V_y(\by,\by^{(j+1)})- r(\by)+ r(\by^{(j+1)}).
	\end{eqnarray}
	\normalsize
	Recall that $\mathbb{E}\bg_x^{(j)}\in\partial_x[f(\bx^{(j)},\by^{(j)})]$ and $-\mathbb{E}\bg_y^{(j)}\in\partial_y[-f(\bx^{(j)},\by^{(j)})]$ according to Assumption~\ref{assume:stochastic1}A. The inequality above, the $\rho$-weak convexity of $f$, and the concavity of $f$ in $\by$ together imply
	\small
	\begin{eqnarray}
	\nonumber
	&&\phi (\bx^{(j)},\by)-\phi (\bx,\by^{(j)})+(\bx^{(j)}-\bx)^\top\bdelta_x^{(j)}-(\by^{(j)}-\by)^\top\bdelta_y^{(j)}	\\\nonumber
	&\leq& \frac{\eta_xM_x^2}{2}+\left(\frac{1}{2\eta_x}+\frac{\rho}{2}\right)\|\bx-\bx^{(j)}\|_2^2-\left(\frac{1}{2\eta_x}+\frac{1}{2\gamma}\right)\|\bx-\bx^{(j+1)}\|_2^2-\bar g(\bx^{(j+1)})+\bar g(\bx^{(j)})\\\label{eq:ineq1}
	&&+\frac{\eta_yM_y^2}{2}+\frac{1}{\eta_y}V_y(\by,\by^{(j)})-\frac{1}{\eta_y}V_y(\by,\by^{(j+1)})+ r(\by^{(j+1)})- r(\by^{(j)}),
	\end{eqnarray}
	\normalsize
	for any $\bx\in\mathcal{X}$ and any $\by\in\Y$.
	
	Let $(\tilde \bx^{(0)},\tilde \by^{(0)})=( \bx^{(0)}, \by^{(0)})$ and let 
	\begin{eqnarray*}
		\tilde\bx^{(j+1)}&=&\argmin_{\bx\in\mathbb{R}^p}-\big\langle \bx,\bdelta_x^{(j)}\big\rangle +\frac{1}{2\eta_x}\|\bx-\tilde\bx^{(j)}\|_2^2\\
		\tilde\by^{(j+1)}&=&\argmin_{\by\in\mathbb{R}^q}\big\langle \by,\bdelta_y^{(j)}\big\rangle +\frac{1}{\eta_y}V_y(\by,\tilde\by^{(j)})
	\end{eqnarray*}
	for $j=0,1,\dots,J-2$. Using a similar analysis as above, we can obtain the following inequality similar to \eqref{eq:ineq1}
	\small
	\begin{eqnarray}
	\label{eq:ineq2}
	&&-(\tilde\bx^{(j)}-\bx)^\top\bdelta_x^{(j)}+(\tilde\by^{(j)}-\by)^\top\bdelta_y^{(j)}	\\\nonumber
	&\leq& \frac{4\eta_xM_x^2}{2}+\frac{1}{2\eta_x}\|\bx-\tilde\bx^{(j)}\|_2^2-\frac{1}{2\eta_x}\|\bx-\tilde\bx^{(j+1)}\|_2^2	+\frac{4\eta_yM_y^2}{2}+\frac{1}{\eta_y}V_y(\by,\tilde\by^{(j)})-\frac{1}{\eta_y}V_y(\by,\tilde\by^{(j+1)}).
	\end{eqnarray}
	\normalsize
	
	The following inequality is then obtained by adding \eqref{eq:ineq1} and \eqref{eq:ineq2} and reorganizing terms:
	\small
	\begin{eqnarray}
	\nonumber
	&&\phi (\bx^{(j)},\by)-\phi (\bx,\by^{(j)})+(\bx^{(j)}-\tilde\bx^{(j)})^\top\bdelta_x^{(j)}-(\by^{(j)}-\tilde\by^{(j)})^\top\bdelta_y^{(j)}	\\\nonumber
	&\leq& \frac{5\eta_xM_x^2}{2}+\left(\frac{1}{2\eta_x}+\frac{\rho}{2}\right)\|\bx-\bx^{(j)}\|_2^2-\left(\frac{1}{2\eta_x}+\frac{1}{2\gamma}\right)\|\bx-\bx^{(j+1)}\|_2^2\\\nonumber
	&&+ \frac{5\eta_yM_y^2}{2}+\frac{1}{\eta_y}V_y(\by,\by^{(j)})-\frac{1}{\eta_y}V_y(\by,\by^{(j+1)})
	-\bar g(\bx^{(j+1)})+\bar g(\bx^{(j)})- r(\by^{(j+1)})+ r(\by^{(j)})\\\nonumber
	&&+\frac{1}{2\eta_x}\|\bx-\tilde\bx^{(j)}\|_2^2-\frac{1}{2\eta_x}\|\bx-\tilde\bx^{(j+1)}\|_2^2
	+\frac{1}{\eta_y}V_y(\by,\tilde\by^{(j)})-\frac{1}{\eta_y}V_y(\by,\tilde\by^{(j+1)}).
	\end{eqnarray}
	\normalsize
	Summing the inequality above for $j=0,1,\dots,J-1$ and using the fact that $\rho<\frac{1}{\gamma}$, we obtain the following inequality after reorganizing terms: 
	\small
	\begin{align}
	\nonumber
	&~~~~\sum_{j=0}^{J-1}\left(\phi (\bx^{(j)},\by)-\phi (\bx,\by^{(j)})\right)
	\\ \nonumber 
	&\leq~\sum_{j=0}^{J-1}\left[-(\bx^{(j)}-\tilde\bx^{(j)})^\top\bdelta_x^{(j)}+(\by^{(j)}-\tilde\by^{(j)})^\top\bdelta_y^{(j)}\right]+\frac{5J\eta_xM_x^2}{2}+\frac{5J\eta_yM_y^2}{2}
	\\\nonumber 
	&~~~~+\left(\frac{1}{\eta_x}+\frac{\rho}{2}\right)\|\bx-\bx^{(0)}\|_2^2+\frac{2}{\eta_y}V_y(\by,\by^{(0)})-g(\bx^{(J)})+g(\bx^{(0)})- r(\by^{(J)})+ r(\by^{(0)})\\\nonumber
	&~~~~+\frac{1}{2\gamma}\|\bx^{(0)}-\bar\bx\|_2^2-\frac{1}{2\gamma}\|\bx^{(J)}-\bar\bx\|_2^2.
	\end{align}
	\normalsize
	Since $\bx^{(0)}=\bar \bx$ and Assumption~\ref{assume:stochastic1}C holds, the inequality above implies
	\small
	\begin{align}
	\nonumber
	&~~~~\sum_{j=0}^{J-1}\left(\phi (\bx^{(j)},\by)-\phi (\bx,\by^{(j)})\right)\\\nonumber
	&\leq~\sum_{j=0}^{J-1}\left[-(\bx^{(j)}-\tilde\bx^{(j)})^\top\bdelta_x^{(j)}+(\by^{(j)}-\tilde\by^{(j)})^\top\bdelta_y^{(j)}\right]+\frac{5J\eta_xM_x^2}{2}+\frac{5J\eta_yM_y^2}{2}\\\nonumber
	&~~~~+\left(\frac{1}{\eta_x}+\frac{\rho}{2}\right)\|\bx-\bx^{(0)}\|_2^2+\frac{2}{\eta_y}V_y(\by,\by^{(0)})+Q_g+Q_r.
	\end{align}
	\normalsize
	Similar to $\widehat\bx$, we define $\widehat\by = \frac{1}{J} \sum^{J- 1}_{j=0} \by^{(j)} $.
	Dividing the inequality above by $J$ and applying the convexity $\phi $ in $\bx$ and the concavity of $\phi $ in $\by$, we obtain for any $\bx\in\X$ and any $\by\in\Y$
	\small
	\begin{eqnarray}
	\nonumber
	\phi (\widehat\bx,\by)-\max_{\by'\in\mathbb{R}^q}\phi (\bx,\by')&\leq&\phi (\widehat\bx,\by)-\phi (\bx,\widehat\by)	\\\nonumber
	&\leq&\frac{1}{J}\sum_{j=0}^{J-1}\left[-(\bx^{(j)}-\tilde\bx^{(j)})^\top\bdelta_x^{(j)}+(\by^{(j)}-\tilde\by^{(j)})^\top\bdelta_y^{(j)}\right]+\frac{5\eta_xM_x^2}{2}+\\\nonumber
	&&\frac{5\eta_yM_y^2}{2}+\frac{1}{J}\left[\left(\frac{1}{\eta_x}+\frac{\rho}{2}\right)\|\bx-\bx^{(0)}\|_2^2+\frac{2}{\eta_y}V_y(\by,\by^{(0)})+Q_g+Q_r\right].
	\end{eqnarray}
	\normalsize
	We then choose $\bx=\bx_\dagger$ in the inequality above and maximize the left-hand side of the inequality above over $\by\in\mathbb{R}^q$. Since the maximum solution is $\widehat\by_*(\widehat\bx)$, we just replace $\by$ on the right-hand side by $\widehat\by_*(\widehat\bx)$ and obtained
	\small
	\begin{eqnarray}
	\nonumber
	&&\max_{\by\in\mathbb{R}^q}\phi (\widehat\bx,\by)-\max_{\by\in\mathbb{R}^q}\phi (\bx_\dagger,\by)\\\nonumber
	&\leq&\frac{1}{J}\sum_{j=0}^{J-1}\left[-(\bx^{(j)}-\tilde\bx^{(j)})^\top\bdelta_x^{(j)}+(\by^{(j)}-\tilde\by^{(j)})^\top\bdelta_y^{(j)}\right]+\frac{5\eta_xM_x^2}{2}+\frac{5\eta_yM_y^2}{2}\\\nonumber
	&&+\frac{1}{J}\left[\left(\frac{1}{\eta_x}+\frac{\rho}{2}\right)\|\bx_\dagger-\bx^{(0)}\|_2^2+\frac{2}{\eta_y}V_y(\widehat\by_*(\widehat\bx)\,\by^{(0)})+Q_g+Q_r\right].
	\end{eqnarray}
	\normalsize
	Since $(\bx^{(0)},\by^{(0)})=(\bar\bx,\bar\by)$, $\mathbb{E}\bdelta_x^{(j)}=0$ and $\mathbb{E}\bdelta_y^{(j)}=0$ conditioning on all the stochastic events up to iteration $j$ for $j=0,1,\dots,J-1$, we obtain the second inequality in \eqref{eq:mainineq1}  by  taking expectation on both sides of the inequality above. 
	The first inequality in \eqref{eq:mainineq1} is because of the  $(\frac{1}{\gamma}-\rho)$-strong convexity of $\phi$ in $\bx$.

\end{proof}

\textbf{Proof of Theorem~\ref{thm:totalcomplexitySD}}
\begin{proof}
	Note that $\max_{\by\in\mathbb{R}^q}\phi_{\gamma} (\bx,\by;\bar\bx)=\psi (\bx) + \frac{1}{2 \gamma} \| \bx- \bar \bx\|^2_2$.
	By the definition of $\bx_\dagger^{(t)}$ in \eqref{eq:subproblem}, we have
	\small
	\begin{eqnarray}
	\label{eq:fact1}
	\psi (\bx_\dagger^{(t)}) + \frac{1}{2 \gamma} \| \bx_\dagger^{(t)} - \bar \bx^{(t)}\|^2_2 \leq \psi (\bar\bx^{(t)}).
	\end{eqnarray}
	\normalsize
	Borrowing the inequalities derived by \cite{davis2017proximally} in the proof of their Theorem 2, we have  
	\small
	\begin{align}\nonumber
	\| \bx_\dagger^{(t)} - \bar \bx^{(t)}\|^2_2 - \|\bar \bx^{(t+1)} - \bar \bx^{(t)} \|^2_2
	&= (\| \bx_\dagger^{(t)} - \bar \bx^{(t)}\| + \|\bar \bx^{(t+1)} - \bar \bx^{(t)} \|)(\| \bx_\dagger^{(t)} - \bar \bx^{(t)}\| - \|\bar \bx^{(t+1)} - \bar \bx^{(t)} \|) \\\nonumber
	&\leq ( 2\| \bx_\dagger^{(t)} - \bar \bx^{(t)}\| + \| \bx_\dagger^{(t)} - \bar \bx^{(t+1)}\| )\| \bx_\dagger^{(t)} - \bar \bx^{(t+1)}\|\\\nonumber
	&= 2\| \bx_\dagger^{(t)} - \bar \bx^{(t)}\|  \| \bx_\dagger^{(t)} - \bar \bx^{(t+1)}\| + \| \bx_\dagger^{(t)} - \bar \bx^{(t+1)}\|^2\\\label{eq:fact2}
	&\leq \frac{1}{3} \| \bx_\dagger^{(t)} - \bar \bx^{(t)}\|^2_2 + 4 \| \bx_\dagger^{(t)} - \bar \bx^{(t+1)}\|^2_2.
	\end{align}
	\normalsize
	
	In iteration $t$ of  Algorithm~\ref{alg:PGM}, Algorithm~\ref{alg:SMD} is called as $$(\bar\bx^{(t+1)},\bar\by^{(t+1)})=\text{SMD}(\bar\bx^{(t)},\bar\by^{(t)},\eta_x^t,\eta_y^t,\gamma, j_t),$$ 
	where the choices of $\bar\by^{(t)}$ and $(\eta_x^t,\eta_y^t, j_t)$ depend on whether case D1 or case D2 holds in Assumption~\ref{assume:stochastic1}. We then prove the convergence property of Algorithm~\ref{alg:PGM} by applying Proposition~\ref{dualgapresult} under case D1 and case D2, separately.\\
	
	\emph{Case D1:} According to Algorithm~\ref{alg:PGM},  Algorithm~\ref{alg:SMD} is called with  
	\small
	$$
	\bar \by^{(t)}\in\argmin_{\by\in\mathbb{R}^q}d_y(\by)\text{ and }(\eta_x^t,\eta_y^t,j_t)=\left(\frac{D_x}{M_x\sqrt{j_t}},\frac{D_y}{M_y\sqrt{j_t}},(t+3)^2\right).
	$$
	\normalsize
	Let  $\bar \bx=\bar\bx^{(t)}$, $\bar \by=\bar \by^{(t)}$, and $(\eta_x,\eta_y,J)=(\eta_x^t,\eta_y^t,j_t)$ in Proposition~\ref{dualgapresult}. By Assumption~\ref{assume:stochastic1}, we have $\|\bx_\dagger-\bar\bx\|_2^2\leq D_x^2$ and $V_y(\widehat\by_*(\widehat\bx),\bar\by)\leq \max_{\by\in\Y}d_y(\by)-\min_{\by\in\Y}d_y(\by)\leq D_y^2/2$ in Proposition~\ref{dualgapresult}. Applying these two inequalities and the definitions of $(\eta_x^t,\eta_y^t,j_t)$ above to~\ref{eq:mainineq1}, we have	
	\small
	\begin{align}
		\nonumber
		&~\mathbb{E}\left[\psi(\bar \bx^{(t+1)})+\frac{1}{2 \gamma} \|\bar \bx^{(t+1)} - \bar \bx^{(t)}\|^2_2-\psi( \bx_\dagger^{(t)}) -\frac{1}{2 \gamma} \| \bx_\dagger^{(t)} - \bar \bx^{(t)}\|^2_2\right]\\\nonumber
		\leq&~\frac{5\eta_xM_x^2}{2}+\frac{5\eta_yM_y^2}{2}+\frac{1}{J}\left[\frac{D_x^2}{\eta_x}+\frac{\rho}{2}\mathbb{E}\|\bx_\dagger^{(t)}-\bar\bx^{(t)}\|_2^2+\frac{D_y^2}{\eta_y}+Q_g+Q_r\right]	\\\label{eq:SMDconvergemainineq1}
		\leq&~\frac{1}{4\gamma (t+3)^2}\mathbb{E}\|\bx_\dagger^{(t)}-\bar\bx^{(t)}\|_2^2+ E_t\leq\frac{1}{36\gamma}\mathbb{E}\|\bx_\dagger^{(t)}-\bar\bx^{(t)}\|_2^2+ E_t,
	\end{align}
	\normalsize
	where the second inequality is because $\rho\leq \frac{1}{2\gamma}$ and 
	\small
	$$
	E_t:=\frac{7M_xD_x+7M_yD_y}{2\sqrt{j_t}}+\frac{Q_g+Q_r}{j_t}.
	$$
	\normalsize
	The first inequality in \eqref{eq:mainineq1} implies
	\small
	\begin{eqnarray}
		\label{eq:SMDconvergemainineq2}
		\mathbb{E}\bigg[\frac{1-\gamma\rho}{2\gamma}\|\bar\bx^{(t+1)}-\bx_\dagger^{(t)}\|_2^2\bigg]
		\leq \frac{1}{36\gamma}\mathbb{E}\|\bx_\dagger^{(t)}-\bar\bx^{(t)}\|_2^2+E_t.
	\end{eqnarray}
	\normalsize
	Using  \eqref{eq:SMDconvergemainineq1} and \eqref{eq:SMDconvergemainineq2} together with \eqref{eq:fact2}, we can show that
	\small
	\begin{align*}
\mathbb{E}\psi(\bar \bx^{(t+1)}) 
	&\leq \mathbb{E}\psi( \bx_\dagger^{(t)}) + \frac{1}{2 \gamma} \mathbb{E}\| \bx_\dagger^{(t)} - \bar \bx^{(t)}\|^2_2 - \frac{1}{2 \gamma} \mathbb{E}\|\bar \bx^{(t+1)} - \bar \bx^{(t)}\|^2_2+ \frac{1}{36\gamma}\mathbb{E}\|\bx_\dagger^{(t)}-\bar\bx^{(t)}\|_2^2+E_t\\
	&\leq \mathbb{E}\psi( \bx_\dagger^{(t)}) + \frac{1}{2 \gamma} \bigg( \frac{1}{3} \mathbb{E}\|\bx_\dagger^{(t)} - \bar \bx^{(t)}\|^2_2 + 4 \mathbb{E}\|\bx_\dagger^{(t)} - \bar \bx^{(t+1)}\|^2_2    \bigg) + \frac{1}{36\gamma}\mathbb{E}\|\bx_\dagger^{(t)}-\bar\bx^{(t)}\|_2^2+ E_t\\
	&\leq \mathbb{E}\psi( \bx_\dagger^{(t)}) + \frac{1}{6 \gamma} \mathbb{E}\|\bx_\dagger^{(t)} - \bar \bx^{(t)}\|^2_2 + \left(\frac{4}{1-\gamma \rho} +1\right)\left(\frac{1}{36\gamma}\mathbb{E}\|\bx_\dagger^{(t)}-\bar\bx^{(t)}\|_2^2+ E_t\right)\\
	&\leq \mathbb{E}\psi( \bx_\dagger^{(t)}) + \frac{5}{12 \gamma} \mathbb{E}\|\bx_\dagger^{(t)} - \bar \bx^{(t)}\|^2_2 + 9E_t,
	\end{align*}
	\normalsize
	where the first three inequalities are because of \eqref{eq:SMDconvergemainineq1}, \eqref{eq:fact2}, and \eqref{eq:SMDconvergemainineq2}, respectively, and the last inequality is because $1-\gamma\rho\geq \frac{1}{2}$ as $\gamma\in (0,1/(2\rho)]$.
	Combining this inequality with \eqref{eq:fact1} leads to 
	\small
	\begin{align*}
	\mathbb{E}\psi(\bar \bx^{(t+1)}) &\leq \mathbb{E}\psi(\bar \bx^{(t)}) - \frac{1}{12\gamma} \mathbb{E}\|\bx_\dagger^{(t)} - \bar \bx^{(t)}\|^2_2+ 9E_t.
	\end{align*}
	\normalsize
	Adding the inequalities for $t=0,...,T-1$ yields
	\small
	\begin{align*}
	\mathbb{E}\psi(\bar \bx^{(T)}) \leq &\mathbb{E}\psi(  \bar\bx^{(0)}) - \sum^{T-1}_{t=0}  \frac{1}{12 \gamma}\mathbb{E} \|\bx_\dagger^{(t)} - \bar \bx^{(t)}\|^2_2+9\sum^{T-1}_{t=0} E_t.
	\end{align*}
	\normalsize
	Rearranging the inequality above gives
	\small
	\begin{align*}
	\sum^{T-1}_{t=0}  \mathbb{E} \|\bx_\dagger^{(t)} - \bar \bx^{(t)}\|^2_2 \leq  3 \gamma\bigg( \mathbb{E} \psi(\bar \bx^{(0)})-\psi^*+ 9\sum^{T-1}_{t=0} E_t\bigg),
	\end{align*}
	\normalsize
	where $\psi^* = \min_{\bx \in \mathbb{R}^p} \psi(\bx)>-\infty$. Let $\tau$ be a uniform random variable supported on $\{0,1,...,T-1\}$. Then, we have 
	\small
	\begin{align}
	\label{eqn:boundonxx_SMD}
	\mathbb{E}\|\bx_\dagger^{(\tau)} - \bar \bx^{(\tau)}\|^2_2 \leq  \frac{3 \gamma}{T}\bigg(  \psi(\bar \bx^{(0)})-\psi^*+ 9\sum^{T-1}_{t=0} E_t\bigg).
	\end{align}
	\normalsize
	
	Since $j_t=(t+3)^2$ in Algorithm~\ref{alg:PGM} in Case D1, we have
	\small
	\begin{align}
	\nonumber
	&\sum^{T-1}_{t=0} E_t
	=\sum^{T-1}_{t=0}\bigg[ \frac{7M_xD_x+7M_yD_y}{2(t+3)}+\frac{Q_g+Q_r}{(t+3)^2}\bigg]\\\label{eq:upperboundE}
	\leq&  \ln(T+1)\left(3.5M_xD_x+3.5M_yD_y\right)+2Q_g+2Q_r.
	\end{align}
	\normalsize
	Recall that $\mathbb{E}[\text{Dist} (\mathbf{0}, \partial \psi(\bx_\dagger^{(\tau)}) )^2] \leq  \frac{1}{\gamma^2}\mathbb{E}\|\bx_\dagger^{(\tau)} - \bar \bx^{(\tau)}\|^2_2$ by the definition of $\bx_\dagger^{(\tau)}$.  According to \eqref{eqn:boundonxx_SMD} and \eqref{eq:upperboundE}, we can ensure $\mathbb{E}[\text{Dist} (\mathbf{0}, \partial \psi(\bx_\dagger^{(\tau)}) )^2]\leq \epsilon^2$ by setting $T$ large enough so that 
	\small
	$$
	\frac{3 }{T\gamma}\bigg(  \psi(\bar \bx^{(0)})-\psi^*+ 18Q_g+18Q_r\bigg)\leq \frac{\epsilon^2}{2}\text{  and  }
	\frac{94.5 \ln(T+1)}{T\gamma}\left(M_xD_x+M_yD_y\right)\leq \frac{\epsilon^2}{2}.
	$$
	\normalsize
	Solving the two inequalities for $T$, we find that such $T$ can be chosen as
	\small
	$$ 
	T=\left\lceil\max\left\{
	\begin{array}{c}
	\frac{6[\psi(\bar \bx^{(0)})-\psi^*+18Q_g+18Q_r]}{\gamma\epsilon^2},\\
	\frac{378(M_xD_x+M_yD_y)}{\gamma\epsilon^2}\ln\left(\frac{378(M_xD_x+M_yD_y)}{\gamma\epsilon^2}\right)
	\end{array}
	\right\}\right\rceil=\tilde O\left(\frac{1}{\gamma\epsilon^2}\right).
	$$
	\normalsize
	
	In Case D1, only one stochastic subgradient of $f$ is computed in each iteration of Algorithm~\ref{alg:SMD}. Hence, the total number of subgradients computed  
	to generate $\bar \bx^{(\tau)}$ is bounded by
	\small
	$$
	\sum^{T-1}_{t=0} j_t =  {O}(T^3) =  \tilde {O}\left(\frac{1}{\gamma^3\epsilon^6}\right).
	$$
	\normalsize

	\emph{Case D2:}
	In this case, we have $f(\bx,\by)=\frac{1}{n}\sum_{i=1}^n\by^\top\mathbf{c}_i(\bx)=\by^\top\mathbf{c}(\bx)$,   $\bar\by^{(t)}=\argmax_{\by\in\mathbb{R}^q}\by^\top \mathbf{c}(\bar\bx^{(t)})-r(\by)$ and $\bar\by^{(t+1)}=\argmax_{\by\in\mathbb{R}^q}\by^\top \mathbf{c}(\bar\bx^{(t+1)})-r(\by)$ in Algorithm $\ref{alg:PGM}$. 
	By the $\mu$-strong convexity of $r$ with respect to $V_y$, we have the following two inequalities:
	\small
	\begin{align*}
	\mu V_y(\bar\by^{(t+1)},\bar\by^{(t)})&\leq -(\bar\by^{(t+1)})^\top \mathbf{c}(\bar\bx^{(t)})+r(\bar\by^{(t+1)})
	+(\bar\by^{(t)})^\top \mathbf{c}(\bar\bx^{(t)})-r(\bar\by^{(t)})\\
	\mu V_y(\bar\by^{(t)},\bar\by^{(t+1)})&\leq-(\bar\by^{(t)})^\top \mathbf{c}(\bar\bx^{(t+1)})+r(\bar\by^{(t)})+(\bar\by^{(t+1)})^\top \mathbf{c}(\bar\bx^{(t+1)})-r(\bar\by^{(t+1)}).
	\end{align*}
	\normalsize
	Adding these two inequalities and organizing terms give
	\small
	\begin{align*}
	\mu V_y(\bar\by^{(t+1)},\bar\by^{(t)})+\mu V_y(\bar\by^{(t)},\bar\by^{(t+1)})
	\leq& ~~\left[\bar\by^{(t)}-\bar\by^{(t+1)}\right]^\top\left[ \mathbf{c}(\bar\bx^{(t)})-\mathbf{c}(\bar\bx^{(t+1)})\right]\\
	\leq& ~~\left\|\bar\by^{(t)}-\bar\by^{(t+1)}\right\|\left\| \mathbf{c}(\bar\bx^{(t)})-\mathbf{c}(\bar\bx^{(t+1)})\right\|_*\\
	\leq& ~~\sqrt{V_y(\bar\by^{(t+1)},\bar\by^{(t)})+V_y(\bar\by^{(t)},\bar\by^{(t+1)})}M_c\left\|\bar\bx^{(t)}-\bar\bx^{(t+1)}\right\|_2,
	\end{align*}
	\normalsize
	where the last inequality is due to the $1$-strong convexity of the distance generating function $d_y$ with respect to the norm $\|\cdot\|$. This inequality simply implies 
	\small
	\begin{eqnarray}
	\label{eq:sccaseineq1}
	\mu\sqrt{V_y(\bar\by^{(t+1)},\bar\by^{(t)})}
	\leq
	\mu\sqrt{V_y(\bar\by^{(t+1)},\bar\by^{(t)})+V_y(\bar\by^{(t)},\bar\by^{(t+1)})}\leq M_c\left\|\bar\bx^{(t)}-\bar\bx^{(t+1)}\right\|_2.
	\end{eqnarray}	
	\normalsize
	
	According to Algorithm~\ref{alg:PGM},  Algorithm~\ref{alg:SMD} is called with 
	\small 
	$$
	\bar\by^{(t)}=\argmax_{\by\in\mathbb{R}^q}\by^\top \mathbf{c}(\bar\bx^{(t)})-r(\by)\text{ and }\left(\frac{60\gamma}{(j_t-30)},\frac{8M_c^2\gamma}{\mu^2j_t},t+32\right).
	$$
	\normalsize
	Setting  $\bar \bx=\bar\bx^{(t)}$, $\bar \by=\bar \by^{(t)}$, and $(\eta_x,\eta_y,J)=(\eta_x^t,\eta_y^t,j_t)$ in Proposition~\ref{dualgapresult} gives
	\small
	\begin{eqnarray}
	\nonumber
	&&\mathbb{E}\left[\psi(\bar \bx^{(t+1)})+\frac{1}{2 \gamma} \|\bar \bx^{(t+1)} - \bar \bx^{(t)}\|^2_2-\psi( \bx_\dagger^{(t)}) -\frac{1}{2 \gamma} \| \bx_\dagger^{(t)} - \bar \bx^{(t)}\|^2_2\right]	\\\nonumber
	&\leq&\frac{5\eta_x^tM_x^2}{2}+\frac{5\eta_y^tM_y^2}{2}+\frac{1}{j_t}\left[\left(\frac{1}{\eta_x^t}+\frac{\rho}{2}\right)\mathbb{E}\|\bx_\dagger^{(t)}- \bar \bx^{(t)}\|_2^2+\frac{2}{\eta_y^t}\mathbb{E}V_y(\bar\by^{(t+1)},\bar\by^{(t)})+Q_g+Q_r\right]\\\nonumber
	&\leq&\frac{5\eta_x^tM_x^2}{2}+\frac{5\eta_y^tM_y^2}{2}+\frac{1}{j_t}\left[\left(\frac{1}{\eta_x^t}+\frac{\rho}{2}\right)\mathbb{E}\|\bx_\dagger^{(t)}- \bar \bx^{(t)}\|_2^2+\frac{2M_c^2}{\mu^2\eta_y^t}\mathbb{E}\|\bar\bx^{(t)}-\bar\bx^{(t+1)}\|_2^2+Q_g+Q_r\right]\\\label{eq:complicatedineq1}
	&\leq&\frac{1}{60\gamma}\mathbb{E}\|\bx_\dagger^{(t)}- \bar \bx^{(t)}\|_2^2+\frac{1}{4\gamma}\mathbb{E}\|\bar\bx^{(t)}-\bar\bx^{(t+1)}\|^2+\frac{150M_x^2\gamma}{(j_t-30)}+\frac{20M_c^2M_y^2\gamma}{\mu^2j_t}+\frac{Q_g+Q_r}{j_t},
	\end{eqnarray}
	\normalsize
	where the second inequality is due to \eqref{eq:sccaseineq1} and the third inequality is because of the choices of $\eta_x^t$, $\eta_y^t$ and $j_t$ as well as the fact that $\rho<\frac{1}{\gamma}$. Let 
	\small
	$$
	E_t':=\frac{150M_x^2\gamma}{j_t-30}+\frac{20M_c^2M_y^2\gamma}{\mu^2j_t}+\frac{Q_g+Q_r}{j_t}.
	$$
	\normalsize
	Reorganizing \eqref{eq:complicatedineq1}, we can obtain 
	\small
	\begin{align}
	\nonumber
	\frac{1}{4\gamma}\mathbb{E}\|\bar\bx^{(t)}-\bar\bx^{(t+1)}\|^2
	&\leq \mathbb{E}\psi( \bx_\dagger^{(t)}) -\mathbb{E}\psi(\bar \bx^{(t+1)})+ \left(\frac{1}{60\gamma}+\frac{1}{2\gamma} \right)\mathbb{E}\| \bx_\dagger^{(t)} - \bar \bx^{(t)}\|^2_2+ E_t'\\\label{eq:complicatedineq2}
	&\leq\mathbb{E}\psi(\bar \bx^{(t)})-\mathbb{E}\psi(\bar \bx^{(t+1)}) + \frac{1}{60 \gamma} \mathbb{E}\|\bx_\dagger^{(t)} - \bar \bx^{(t)}\|^2_2 + E_t',
	\end{align}
	\normalsize
	where the second inequality is from \eqref{eq:fact1}.  Applying \eqref{eq:complicatedineq2} to the right hand side of \eqref{eq:complicatedineq1}, we have
	\small
	\begin{eqnarray}
	\nonumber
	&&\mathbb{E}\left[\psi(\bar \bx^{(t+1)})+\frac{1}{2 \gamma} \|\bar \bx^{(t+1)} - \bar \bx^{(t)}\|^2_2-\psi( \bx_\dagger^{(t)}) -\frac{1}{2 \gamma} \| \bx_\dagger^{(t)} - \bar \bx^{(t)}\|^2_2\right]	\\
\label{eq:complicatedineq3}
	&\leq&\frac{1}{30\gamma}\|\bx_\dagger^{(t)}- \bar \bx^{(t)}\|_2^2+\mathbb{E}\left[\psi(\bar \bx^{(t)})-\psi(\bar \bx^{(t+1)})\right] + 2E_t'.
	\end{eqnarray}
	\normalsize
	Reorganizing this inequality gives 
	\small
	\begin{align}\nonumber
	\mathbb{E}\psi(\bar \bx^{(t+1)}) 
	\leq& ~\mathbb{E}\psi( \bx_\dagger^{(t)}) + \left(\frac{1}{30\gamma}+\frac{1}{2\gamma}\right)\mathbb{E}\| \bx_\dagger^{(t)} - \bar \bx^{(t)}\|^2_2-\frac{1}{2\gamma}\mathbb{E}\|\bar\bx^{(t)}-\bar\bx^{(t+1)}\|^2\\\nonumber
	&+ \mathbb{E}\left[\psi(\bar \bx^{(t)})-\psi(\bar \bx^{(t+1)})\right]+2E_t'\\\nonumber
	\leq&~\mathbb{E}\psi( \bar\bx^{(t)}) + \frac{1}{30\gamma} \mathbb{E}\| \bx_\dagger^{(t)} - \bar \bx^{(t)}\|^2_2-\frac{1}{2\gamma}\mathbb{E}\|\bar\bx^{(t)}-\bar\bx^{(t+1)}\|^2\\\nonumber
	&+ \mathbb{E}\left[\psi(\bar \bx^{(t)})-\psi(\bar \bx^{(t+1)})\right]+2E_t'\\\nonumber
	\leq&~\mathbb{E}\psi( \bar\bx^{(t)}) + \left(\frac{1}{30\gamma}-\frac{1-1/a}{2\gamma}\right) \mathbb{E}\| \bx_\dagger^{(t)} - \bar \bx^{(t)}\|^2_2+\frac{a-1}{2\gamma}\mathbb{E}\|\bx_\dagger^{(t)}-\bar\bx^{(t+1)}\|^2\\\label{eq:complicatedineq4}
	&+ \mathbb{E}\left[\psi(\bar \bx^{(t)})-\psi(\bar \bx^{(t+1)})\right]+2E_t',
	\end{align}
	\normalsize
	where the second inequality is due to \eqref{eq:fact1} and the third  is from the Young's inequality, i.e.,  
	$\|\bar\bx^{(t)}-\bar\bx^{(t+1)}\|^2\geq (1-1/a)\|\bar\bx^{(t)}-\bx_\dagger^{(t)}\|^2+(1-a)\|\bx_\dagger^{(t)}-\bar\bx^{(t+1)}\|^2$ for a positive constant $a$ to be determined later.

	By the $(\frac{1}{\gamma}-\rho)$-strong convexity of $\psi(\bx)+\frac{1}{2\gamma}\|\bx-\bar\bx^{(t)}\|_2^2$ and \eqref{eq:complicatedineq3}, we have 
	\small
	\begin{align}
	\nonumber
	\mathbb{E}\| \bx_\dagger^{(t)} - \bar \bx^{(t+1)}\|^2_2\leq&\frac{2\gamma}{1-\gamma\rho}\left[\mathbb{E}\psi(\bar \bx^{(t+1)})+\frac{1}{2 \gamma}\mathbb{E} \|\bar \bx^{(t+1)} - \bar \bx^{(t)}\|^2_2-\psi( \bx_\dagger^{(t)}) -\frac{1}{2 \gamma}\mathbb{E} \| \bx_\dagger^{(t)} - \bar \bx^{(t)}\|^2_2\right]\\\nonumber
	\leq&\frac{1}{15(1-\gamma\rho)}\mathbb{E}\|\bx_\dagger^{(t)}- \bar \bx^{(t)}\|_2^2+\frac{2\gamma}{1-\gamma\rho}\mathbb{E}\left[\psi(\bar \bx^{(t)})-\psi(\bar \bx^{(t+1)})\right] + \frac{4\gamma}{1-\gamma\rho}E_t'.
	\end{align}
	\normalsize
	Applying this inequality to the right hand side of \eqref{eq:complicatedineq4}, we have
	\small
	\begin{align}\nonumber
	\mathbb{E}\psi(\bar \bx^{(t+1)}) 
	\leq&~ \mathbb{E}\psi( \bar\bx^{(t)})+\left(\frac{1}{30\gamma}-\frac{1-1/a}{2\gamma}+\frac{a-1}{30\gamma(1-\gamma\rho)}\right)  \mathbb{E}\| \bx_\dagger^{(t)} - \bar \bx^{(t)}\|^2_2\\\nonumber
	&~+\left(1+\frac{a-1}{1-\gamma\rho}\right) \mathbb{E}\left[\psi(\bar \bx^{(t)})-\psi(\bar \bx^{(t+1)})\right]+\left(2+\frac{2(a-1)}{1-\gamma\rho}\right)E_t'.
	\end{align}
	\normalsize
	Choosing $a=5$ and using the fact that $1-\gamma\rho\geq \frac{1}{2}$ as $\gamma\in (0,1/(2\rho)]$, we derived the following inequality from the one  above 
		\small
	\begin{align}\nonumber
	\mathbb{E}\psi(\bar \bx^{(t+1)}) 
	\leq\mathbb{E}\psi( \bar\bx^{(t)})-\frac{1}{10\gamma}  \mathbb{E}\| \bx_\dagger^{(t)} - \bar \bx^{(t)}\|^2_2+9\mathbb{E}\left[\psi(\bar \bx^{(t)})-\psi(\bar \bx^{(t+1)})\right]+18E_t',
	\end{align}
	\normalsize
	or equivalently,
	\small
	\begin{align}\nonumber
	\mathbb{E}\psi(\bar \bx^{(t+1)}) 
	\leq\mathbb{E}\psi( \bar\bx^{(t)})-\frac{1}{100\gamma}  \mathbb{E}\| \bx_\dagger^{(t)} - \bar \bx^{(t)}\|^2_2+1.8E_t'.
	\end{align}
	\normalsize
	Adding the inequality above for $t=0,...,T-1$ yields
	\begin{align*}
	\mathbb{E}\psi(\bar \bx^{(T)}) \leq &\psi(  \bar\bx^{(0)}) - \sum^{T-1}_{t=0}  \frac{1}{100 \gamma}\mathbb{E}\|\bx_\dagger^{(t)} - \bar \bx^{(t)}\|^2_2+1.8\sum^{T-1}_{t=0} E_t'.
	\end{align*}
	Rearranging the inequality above gives
	\small
	\begin{align*}
	\sum^{T-1}_{t=0}  \mathbb{E} \|\bx_\dagger^{(t)} - \bar \bx^{(t)}\|^2_2 \leq  100 \gamma\bigg(  \psi(\bar \bx^{(0)})-\psi^*+ 1.8\sum^{T-1}_{t=0} E_t'\bigg),
	\end{align*}
	\normalsize
	where $\psi^* = \min_{\bx \in \mathcal{X}} \psi(\bx)>-\infty$. Let $\tau$ be a uniform random variable supported on $\{0,1,...,T-1\}$. Then, we have 
	\small
	\begin{align}
	\label{eqn:boundxx2}
	\mathbb{E}\|\bx_\dagger^{(\tau)} - \bar \bx^{(\tau)}\|^2_2 \leq  \frac{100 \gamma}{T}\bigg(  \psi(\bar \bx^{(0)})-\psi^*+ 1.8\sum^{T-1}_{t=0} E_t'\bigg).
	\end{align}
	\normalsize
	
	Since $j_t=t+32$ in  Algorithm~\ref{alg:PGM}in Case D2, we have
	\small
	\begin{align*}
	\sum^{T-1}_{t=0} E_t'
	&=\sum^{T-1}_{t=0}
	\frac{300M_x^2\gamma}{t+2}+\sum^{T-1}_{t=0}\frac{20M_c^2M_y^2\gamma}{\mu^2(t+32)}+\sum^{T-1}_{t=0}\frac{Q_g+Q_r}{t+32}\\
	&\leq 
	300M_x^2\gamma\ln(T+1)+\frac{20M_c^2M_y^2\gamma}{\mu^2}\ln\left(\frac{T+31}{31}\right)+(Q_g+Q_r)\ln\left(\frac{T+31}{31}\right)\\
	&\leq 
	\left(300M_x^2\gamma+\frac{20M_c^2M_y^2\gamma}{\mu^2}+Q_g+Q_r\right)\ln(T+1).
	\end{align*}
	\normalsize
	Recall that $\mathbb{E}[\text{Dist} (\mathbf{0}, \partial \psi(\bx_\dagger^{(\tau)}) )^2] \leq  \frac{1}{\gamma^2}\mathbb{E}\|\bx_\dagger^{(\tau)} - \bar \bx^{(\tau)}\|^2_2$ by the definition of $\bx_\dagger^{(\tau)}$. According to \eqref{eqn:boundxx2}, we can ensure 
	$\frac{1}{\gamma^2}\mathbb{E}\|\bx_\dagger^{(\tau)} - \bar \bx^{(\tau)}\|^2_2\leq \epsilon^2$
	by setting $T$ large enough so that 
	\small
	$$
	\frac{100 }{T\gamma}\bigg(  \psi(\bar \bx^{(0)})-\psi^*\bigg)\leq \frac{\epsilon^2}{2}\text{  and  }
	\left(300M_x^2\gamma+\frac{20M_c^2M_y^2\gamma}{\mu^2}+Q_g+Q_r\right)\frac{180\ln(T+1)}{T \gamma}\leq \frac{\epsilon^2}{2}.
	$$
	\normalsize
	Solving the two inequalities for $T$, we find that such $T$ can be chosen as
	
	\small
	\begin{eqnarray*}
	T&=&\left\lceil\max\left\{
	\begin{array}{c}
	\frac{200(\psi(\bar \bx^{(0)})-\psi^*)}{\gamma\epsilon^2},\\
	\frac{720\left(300M_x^2\gamma+\frac{20M_c^2M_y^2\gamma}{\mu^2}+Q_g+Q_r\right)}{\gamma\epsilon^2}\ln\left(\frac{720\left(300M_x^2\gamma+\frac{20M_c^2M_y^2\gamma}{\mu^2}+Q_g+Q_r\right)}{\gamma\epsilon^2}\right)
	\end{array}\right\}\right\rceil\\
	&=&\tilde O\left(\left(\frac{1}{\gamma}+\frac{1}{\mu^2}\right)\frac{1}{\epsilon^2}\right).
	\end{eqnarray*}
	\normalsize
	In Case D2, only one stochastic subgradient $(\bg_x^{(j)},\bg_y^{(j)})$ is computed in each iteration of Algorithm~\ref{alg:SMD}. However, at the beginning of each iteration of Algorithm~\ref{alg:PGM}, we need to compute $\bar\by^{(t)}=\argmax\limits_{\by\in\mathbb{R}^q}f(\bar\bx^{(t)},\by)-r(\by)$ through computing $\bc_i(\bar\bx^{(t)})$ for all $i$'s whose computation complexity is equivalent to computing $n$ stochastic subgradients of $f$. Hence, the total (equivalent) number of subgradients computed to generate $\bar \bx^{(\tau)}$ is bounded by
	\small
	$$
	 Tn+\sum^{T-1}_{t=0} j_t =O(Tn+T^2)  =  \tilde O\left(\left(\frac{1}{\gamma}+\frac{1}{\mu^2}\right)\frac{n}{\epsilon^2}+\left(\frac{1}{\gamma^2}+\frac{1}{\mu^4}\right)\frac{1}{\epsilon^4}\right). 
	 $$
	 \normalsize
\end{proof}

\section{Proof of Theorem~\ref{thm:totalcomplexitySVRG} }
\label{sec:prooftheorem2}
We first present the convergence property of the SVRG method (Algorithm~\ref{alg:PGSVRG}) in solving (\ref{subminmaxproblem0finitesum}). This convergence result is well known and can be found in several works~\cite{palaniappan2016stochastic,shi2017bregman,lin2018level}. For simplicity of notation, we write $\phi_{\gamma,\lambda}(\bx,\by;\bar\bx,\bar\by)$ in  (\ref{subminmaxproblem}) as  $\phi(\bx,\by)$ and define
$$
\bar g(\bx)=\frac{1}{2\gamma}\|\bx-\bar\bx\|_2^2+g(\bx)~\text{ and }~\bar r(\by)=\frac{1}{\lambda}V_y(\by,\bar\by)+r(\by).
$$
We need the following lemma to facilitate the proof.
\begin{lemma}
	\label{lemma:svrg}
	Suppose Assumptions~\ref{assume:stochastic} and \ref{assume:finitesum} hold and $\gamma\in(0,1/(2\rho)]$ in $\bar g$ above. Let $\mu_x=\frac{1}{2\gamma}$ and $\mu_y=\mu+\frac{1}{\lambda}$. 
	The function (of $\bx$)
	$\max_{\by\in\Y}f(\bx,\by)-\bar r(\by)$ is smooth and its gradient is $L_x\sqrt{1+(1+L_x/\mu_y)^2}$-Lipschitz continuous. The function (of $\by$)
	$\min_{\bx\in\X}f(\bx,\by)+\bar g(\bx)$ is smooth and its gradient is $L_y\sqrt{1+(1+L_y/\mu_x)^2}$-Lipschitz continuous.
\end{lemma}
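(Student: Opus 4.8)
The plan is to treat the two claims symmetrically, so I focus on the first and show that $\Phi(\bx):=\max_{\by\in\Y}\{f(\bx,\by)-\bar r(\by)\}$ is differentiable with a Lipschitz gradient. First I would record the structural fact that makes the inner problem well behaved: since $r$ is $\mu$-strongly convex with respect to $V_y$ and $\frac{1}{\lambda}V_y(\cdot,\bar\by)$ adds a further $\frac{1}{\lambda}$ of strong convexity, the function $\bar r=\frac{1}{\lambda}V_y(\cdot,\bar\by)+r$ is $\mu_y$-strongly convex with $\mu_y=\mu+\frac{1}{\lambda}$. Combined with the concavity of $f$ in $\by$ (Assumption~\ref{assume:stochastic}B), the inner objective $f(\bx,\cdot)-\bar r(\cdot)$ is $\mu_y$-strongly concave with respect to $V_y$, hence (via $V_y(\by,\by')\ge\frac12\|\by-\by'\|^2$) also $\mu_y$-strongly concave with respect to $\|\cdot\|$. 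In particular the maximizer $\by^*(\bx):=\argmax_{\by\in\Y}\{f(\bx,\by)-\bar r(\by)\}$ is unique; this is exactly the role of the extra term $-\frac{1}{\lambda}V_y$ introduced in \eqref{subminmaxproblem}.

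Next I would invoke Danskin's theorem: because $\by^*(\bx)$ is unique and $f(\cdot,\by)$ is differentiable (Assumption~\ref{assume:finitesum}B), $\Phi$ is differentiable with $\nabla\Phi(\bx)=\nabla_x f(\bx,\by^*(\bx))$. The Lipschitz continuity of $\nabla\Phi$ then reduces to two ingredients: the Lipschitz continuity of $\bx\mapsto\by^*(\bx)$, and the joint Lipschitz continuity of $\nabla_x f$. For the first ingredient I would use the ``add two strong-concavity inequalities'' argument already employed for \eqref{eq:sccaseineq1}. Writing $\by_i=\by^*(\bx_i)$, summing the two optimality inequalities from $\mu_y$-strong concavity gives $\mu_y\|\by_1-\by_2\|^2\le \delta(\by_1)-\delta(\by_2)$ with $\delta(\by):=f(\bx_1,\by)-f(\bx_2,\by)$, since the $\bar r$ contributions cancel. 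As $\nabla_y\delta(\by)=\nabla_y f(\bx_1,\by)-\nabla_y f(\bx_2,\by)$ has dual norm at most $L_x\|\bx_1-\bx_2\|_2$ (by the symmetry of the mixed second derivatives, equivalently that $\nabla_x f$ is $L_x$-Lipschitz in $\by$), this yields $\|\by^*(\bx_1)-\by^*(\bx_2)\|\le \frac{L_x}{\mu_y}\|\bx_1-\bx_2\|_2$.

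For the second ingredient I would apply the joint $L_x$-Lipschitz continuity of $\nabla_x f$ to the displacement $(\bx_1-\bx_2,\;\by^*(\bx_1)-\by^*(\bx_2))$:
\begin{align*}
\|\nabla\Phi(\bx_1)-\nabla\Phi(\bx_2)\|_2
&=\|\nabla_x f(\bx_1,\by^*(\bx_1))-\nabla_x f(\bx_2,\by^*(\bx_2))\|_2\\
&\le L_x\sqrt{\|\bx_1-\bx_2\|_2^2+\|\by^*(\bx_1)-\by^*(\bx_2)\|^2}\\
&\le L_x\sqrt{1+(1+L_x/\mu_y)^2}\,\|\bx_1-\bx_2\|_2,
\end{align*}
where the last step uses the argmax bound together with $\sqrt{1+(L_x/\mu_y)^2}\le\sqrt{1+(1+L_x/\mu_y)^2}$, which absorbs the slack and recovers exactly the stated constant. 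The second claim follows by the symmetric argument with the roles of $\bx$ and $\by$ exchanged: $f(\bx,\by)+\bar g(\bx)$ is $\mu_x$-strongly convex in $\bx$ with $\mu_x=\frac1\gamma-\rho>0$ (from the $\rho$-weak convexity of $f$, the convexity of $g$, and the $\frac{1}{2\gamma}\|\cdot\|_2^2$ term), so the minimizer $\bx^*(\by)$ is unique and $\frac{L_y}{\mu_x}$-Lipschitz, and combining with the joint $L_y$-Lipschitz continuity of $\nabla_y f$ gives the modulus $L_y\sqrt{1+(1+L_y/\mu_x)^2}$.

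The main obstacle I anticipate is the argmax/argmin Lipschitz step together with the faithful bookkeeping of constants, complicated here by the fact that $\Y$ carries a generic norm $\|\cdot\|$ while $\X$ is Euclidean, and the strong convexity is stated relative to the Bregman divergence $V_y$ rather than $\|\cdot\|_2$. One must pass between $V_y$ and $\|\cdot\|$ carefully and ensure the cross term $\nabla_y f(\bx_1,\cdot)-\nabla_y f(\bx_2,\cdot)$ is measured in the correct dual norm. A secondary but routine technical point is verifying the differentiability hypotheses of Danskin's theorem on the constrained domains $\X,\Y$ rather than on all of $\mathbb{R}^p,\mathbb{R}^q$; this is handled cleanly by the strong convexity/concavity, which guarantees uniqueness of the inner optimizer and thus single-valuedness of the gradient.
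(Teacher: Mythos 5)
Your proof is correct and shares the paper's overall skeleton (strong concavity of the inner problem gives a unique maximizer, Danskin's theorem gives $\nabla\Phi(\bx)=\nabla_x f(\bx,\by^*(\bx))$, and the final constant comes from composing the Lipschitz continuity of $\bx\mapsto\by^*(\bx)$ with the joint Lipschitz continuity of $\nabla_x f$), but your derivation of the argmax-Lipschitz step is genuinely different. The paper bounds $\delta(\by^*(\bx_1))-\delta(\by^*(\bx_2))$ entirely through the $\bx$-variable: two applications of the smoothness (descent-lemma) inequality for $\nabla_x f$, then Cauchy--Schwarz, the joint Lipschitz bound, and Young's inequality, which produces the modulus $1+L_x/\mu_y$ for the argmax map --- this is precisely where the ``$1+$'' in the stated constant comes from. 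You instead bound the same quantity through the $\by$-gradient of $\delta$, using the symmetry of mixed second derivatives to convert ``$\nabla_x f$ is $L_x$-Lipschitz in $\by$'' into $\|\nabla_y f(\bx_1,\by)-\nabla_y f(\bx_2,\by)\|_*\le L_x\|\bx_1-\bx_2\|_2$; this yields the tighter modulus $L_x/\mu_y$ and hence the sharper final constant $L_x\sqrt{1+(L_x/\mu_y)^2}$, which you then relax to the stated one. What each buys: yours is shorter and gives better constants; the paper's is looser but uses nothing beyond the literally assumed joint Lipschitz continuity of $\nabla_x f$.

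The one point you should shore up is the symmetry step itself: Assumption~\ref{assume:finitesum}B does not assume $f_i$ is twice differentiable, so Schwarz's theorem is not directly available. The cross-Lipschitz transfer is still true for differentiable $f$ with (jointly) Lipschitz partial gradients --- for instance, write the four-point quantity $f(\bx_1,\by_1)+f(\bx_2,\by_2)-f(\bx_1,\by_2)-f(\bx_2,\by_1)$ as an integral of either partial-gradient difference along a segment, bound it by $L_x\|\bx_1-\bx_2\|_2\,\|\by_1-\by_2\|$, and let the $\by$-increment shrink (or argue by mollification) --- but as written this step is asserted rather than proved, and it is the only ingredient your argument needs that the paper's self-contained estimate deliberately avoids. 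The same remark applies to your second claim, where you use the transfer in the reverse direction to get the $L_y/\mu_x$ bound on the argmin map.
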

\begin{proof}
	We will only prove the first conclusion because the proof of the second conclusion is similar.
	
	Because $\bar r$ is $\mu_y$-strongly convex ($\mu_y>0$) with respect to $V_y$ (and thus with respect to the norm $\|\cdot\|$), 
	according to Assumption~\ref{assume:stochastic}, the solution of $\max_{\by\in\Y}f(\bx,\by)-\bar r(\by)$  is unique and we denoted it by $\by_{\bx}$ to reflect its dependency on $\bx$. According to Danskin's theorem,  $\max_{\by\in\Y}f(\bx,\by)-\bar r(\by)$ is differentiable and its gradient is 
	$\nabla_x f(\bx,\by_{\bx})$.
	
	We next show $\nabla_x f(\bx,\by_{\bx})$ is Lipschitz continuous with respect to $\bx$ on $\mathcal{X}$. Given any $\bx,\bx'\in\mathcal{X}$, by the definition of $\by_{\bx}$ and $\by_{\bx'}$ and the strong convexity of $\bar r$  with respect to  norm $\|\cdot\|$, the following two inequalities hold
	\small
	\begin{eqnarray*}
		\bar r(\by_{\bx})-f(\bx,\by_{\bx})+\frac{\mu_y}{2}\|\by_{\bx}-\by_{\bx'}\|^2&\leq& \bar r(\by_{\bx'})-f(\bx,\by_{\bx'})\\	
		\bar r(\by_{\bx'})-f(\bx',\by_{\bx'})+\frac{\mu_y}{2}\|\by_{\bx'}-\by_{\bx}\|^2&\leq& \bar r(\by_{\bx})-f(\bx',\by_{\bx}).
	\end{eqnarray*}
\normalsize
	Summing up these two inequality gives us
	\small
	\begin{eqnarray*}
		\mu_y\|\by_{\bx}-\by_{\bx'}\|^2&\leq& f(\bx,\by_{\bx})-f(\bx',\by_{\bx})+f(\bx',\by_{\bx'})-f(\bx,\by_{\bx'})\\
		&\leq&(\bx-\bx')^\top\nabla_xf(\bx',\by_{\bx})-(\bx-\bx')^\top\nabla_xf(\bx,\by_{\bx'})+L_x\|\bx-\bx'\|_2^2\\
		&\leq&\|\bx'-\bx\|_2\|\nabla_xf(\bx',\by_{\bx})-\nabla_xf(\bx,\by_{\bx'})\|_2+L_x\|\bx-\bx'\|_2^2\\
		&\leq&\|\bx'-\bx\|_2L_x\sqrt{\|\bx-\bx'\|_2^2+\|\by_{\bx}-\by_{\bx'}\|^2}+L_x\|\bx-\bx'\|_2^2\\
		&\leq&\frac{L_x^2}{2\mu_y}\|\bx'-\bx\|_2^2+\frac{\mu_y}{2}\left(\|\bx-\bx'\|_2^2+\|\by_{\bx}-\by_{\bx'}\|^2\right)+L_x\|\bx-\bx'\|_2^2,
	\end{eqnarray*}
\normalsize
	where the second and the fourth inequalities are because of the $L_x$-Lipschitz continuity of $\nabla_xf(\bx,\by)$, the third is from Cauchy-Schwarz inequality, and the last one is from Young's inequality. Reorganizing the terms in the inequality above leads to 
	\small
	\begin{eqnarray}
	\label{eq:svrglemma1}
	\|\by_{\bx}-\by_{\bx'}\|^2\leq \frac{L_x^2}{\mu_y^2}\|\bx'-\bx\|_2^2+\|\bx-\bx'\|_2^2+\frac{2L_x}{\mu_y}\|\bx-\bx'\|_2^2=\left(1+\frac{L_x}{\mu_y}\right)^2\|\bx-\bx'\|_2^2.
	\end{eqnarray}
	\normalsize
	In addition, by $L_x$-Lipschitz continuity of $\nabla_xf(\bx,\by)$ again, we have
	\small
	$$
	\|\nabla_x f(\bx,\by_{\bx})-\nabla_x f(\bx',\by_{\bx'})\|_2\leq L_x\sqrt{\|\bx-\bx'\|_2^2+\|\by_{\bx}-\by_{\bx'}\|^2}.
	$$
	\normalsize
	Applying \eqref{eq:svrglemma1} to this inequality implies the first conclusion of this lemma.
\end{proof}

\begin{proposition}[Convergence of Algorithm~\ref{alg:SVRG}]
	\label{dualgapresultsvrg}
	Suppose Assumptions~\ref{assume:stochastic} and~\ref{assume:finitesum} hold. 
	Algorithm $\ref{alg:SVRG}$ guarantees 
	\small
	\begin{align}
	\nonumber
		&~\mathbb{E}\Bigg[\frac{\mu_x}{2}\|\widehat\bx^{(K-1)}-\bx_\dagger\|_2^2\bigg]\leq
	\mathbb{E}\left[\max_{\by\in\mathbb{R}^q}\phi_{\gamma} (\widehat\bx^{(K-1)},\by;\bar\bx)-\min_{\bx\in\mathbb{R}^p}\max_{\by\in\mathbb{R}^q}\phi_{\gamma} (\bx,\by;\bar\bx)\right]\\	\label{eq:mainineq1svrg}
	\leq&~\left(\dfrac{3}{4}\right)^{K-1}\left(\dfrac{1}{4}+\dfrac{\Lambda}{2}\right)\left[\mu_xD_x^2+\mu_yD_y^2\right]+\dfrac{D_y^2}{2\lambda}.
	\end{align}
	\normalsize
	where $\mu_x=\frac{1}{2\gamma}$, $\mu_y=\frac{1}{\lambda}+\mu$, and $\bx_\dagger=\text{prox}_{\gamma\psi}(\bar\bx)$.
\end{proposition}
\begin{proof}
	We	denote the saddle-point of \eqref{subminmaxproblem0finitesum} as $(\bx_*,\by_*)$.  
	We first focus on the $k$th outer iteration of in Algorithm~\ref{alg:SVRG}. Let $\mathbb{E}_j$ represent the conditional expectation conditioning on $(\bx^{(0)},\by^{(0)})=(\widehat\bx^{(k)},\widehat\by^{(k)})$ and all random events that happen before the $j$th inner iteration of Algorithm~\ref{alg:SVRG}.

	The optimality conditions of the updating equations of $(\bx^{(j+1)},\by^{(j+1)})$ imply
	\small
	\begin{eqnarray}
	\label{eq:ineqprimal1}\nonumber
	&&\left(\frac{1}{\gamma}+\frac{1}{\eta_x }\right)\frac{\|\bx-\bx^{(j+1)}\|_2^2}{2}+(\bx^{(j+1)})^\top \bg_x^{(j)}+\bar g(\bx^{(j+1)})+\frac{\|\bx^{(j)}-\bx^{(j+1)}\|_2^2}{2\eta_x }\\
	&\leq&
	\bx^\top \bg_x^{(j)}+\bar g(\bx)+\frac{\|\bx-\bx \|_2^2}{2\eta_x }\\
	\label{eq:ineqdual1}\nonumber
	&&\left(\mu_y+\frac{1}{\eta_y }\right)V_y(\by,\by^{(j+1)})-(\by^{(j+1)})^\top \bg_y^{(j)}+\bar r(\by^{(j+1)})+\frac{V_y(\by^{(j+1)},\by^{(j)})}{\eta_y }\\
	&\leq&-\by^\top \bg_y^{(j)}+\bar r(\by)+\frac{V_y(\by,\by^{(j)})}{\eta_y }
	\end{eqnarray}
	\normalsize
	for any $\bx\in\mathcal{X}$ and $\by\in\Y$.	We define
	\begin{eqnarray}
	\tilde\Pc(\bx)&:=& f(\bx,\by_* )+\bar g(\bx)-f(\bx_* ,\by_* )-\bar g(\bx_* )\\
	\tilde\Dc(\by)&:=& f(\bx_* ,\by)-\bar r(\by)-f(\bx_* ,\by_* )+\bar r(\by_* ).
	\end{eqnarray}
	Note that $\min_{\bx\in\mathcal{X}}\tilde\Pc(\bx)=\tilde\Pc(\bx_* )=0$ and $\max_{\by\in\Y}\tilde\Dc(\by)=\tilde\Dc(\by_* )=0$.
	By the $\mu_x$-strong convexity of $\tilde\Pc(\bx)$ with respect to Euclidean distance\footnote{This is because $f(\bx,\by_* )+\bar g(\bx)$ is $(\frac{1}{\gamma}-\rho)$-strongly convex and $\frac{1}{\gamma}-\rho\geq \frac{1}{2\gamma}=\mu_x$ as $\gamma\in(0,1/(2\rho)]$.} and the $\mu_y$-strong convexity of $\tilde\Dc(\by)$ with respect to $V_y$, we can show that
	\begin{eqnarray}
	\label{eq:convproof1}
	\tilde\Pc(\bx)\geq\frac{\mu_x\|\bx-\bx_* \|_2^2}{2}\quad \text{and}\quad-\tilde\Dc(\by)\geq\mu_y V_y(\by,\by_* ).
	\end{eqnarray}
	
	We choose $\bx=\bx_* $ in \eqref{eq:ineqprimal1} and $\by=\by_* $ in \eqref{eq:ineqdual1}, and then add \eqref{eq:ineqprimal1} and \eqref{eq:ineqdual1} together. After organizing terms, we obtain 
	\small
	\begin{eqnarray}
	\nonumber
	&&\left(\frac{1}{\gamma}+\frac{1}{\eta_x }\right)\frac{\|\bx_* -\bx^{(j+1)}\|_2^2}{2}+\frac{\|\bx^{(j)}-\bx^{(j+1)}\|_2^2}{2\eta_x } +\left(\mu_y+\frac{1}{\eta_y }\right)V_y(\by_* ,\by^{(j+1)})+f(\bx_* ,\by)\\\nonumber
	&&+\frac{V_y(\by^{(j+1)},\by^{(j)})}{\eta_y } + \tilde\Pc(\bx^{(j+1)})-\tilde\Dc(\by^{(j+1)})-\frac{\|\bx_* -\bx^{(j)}\|_2^2}{2\eta_x }- \frac{V_y(\by_* ,\by^{(j)})}{\eta_y }-f(\bx,\by_* )\\\nonumber
	&\leq&(\bx_* -\bx^{(j+1)})^\top \bg_x^{(j)}-(\by_*  -\by^{(j+1)})^\top \bg_y^{(j)}.
	\end{eqnarray}
	\normalsize
	We reorganize the right hand side of the inequality above as follows
	\small
	\begin{eqnarray}
	\nonumber
	&&(\bx_* -\bx^{(j+1)})^\top \bg_x^{(j)}-(\by_*  -\by^{(j+1)})^\top \bg_y^{(j)}
	\\\nonumber
	&=&(\bx_* -\bx^{(j)})^\top \left[\bg_x^{(j)}-\nabla_x f(\bx^{(j)},\by^{(j)})\right]
	-(\by_*  -\by^{(j)})^\top \left[\bg_y^{(j)}-\nabla_y f(\bx^{(j)},\by^{(j)})\right]\\\nonumber
	&&+(\bx^{(j)}-\bx^{(j+1)})^\top \left[\bg_x^{(j)}-\nabla_x f(\bx^{(j)},\by^{(j)})\right]
	-(\by^{(j)}-\by^{(j+1)})^\top \left[\bg_y^{(j)}-\nabla_y f(\bx^{(j)},\by^{(j)})\right]\\
	&&	+(\bx_* -\bx^{(j+1)})^\top\nabla_x f(\bx^{(j)},\by^{(j)})-(\by_* -\by^{(j+1)})^\top \nabla_y f(\bx^{(j)},\by^{(j)}).
	\label{eq:ineq1svrg}
	\end{eqnarray}
	\normalsize

	Next, we study the three lines on the right hand side of \eqref{eq:ineq1}, respectively. 
	Since the random index $l$ is independent of $\bx^{(j)}$ and $\by^{(j)}$, we have 
	\small
	\begin{eqnarray}
	\label{eq:ineq2svrg}
	\E_j\left[(\bx_* -\bx^{(j)})^\top \left[\bg_x^{(j)}-\nabla_x f(\bx^{(j)},\by^{(j)})\right]
	-(\by_*  -\by^{(j)})^\top \left[\bg_y^{(j)}-\nabla_y f(\bx^{(j)},\by^{(j)})\right]\right]=0
	\end{eqnarray}
	\normalsize
	by the definition of $\bg_x^{(j)}$ and $\bg_y^{(j)}$. 	By the definition of $\bg_x^{(j)}$, Cauchy-Schwarz inequality and Young's inequality, we have
	\small
	\begin{align}
	\nonumber
	&~\E_j\left[(\bx^{(j)}-\bx^{(j+1)})^\top \left[\bg_x^{(j)}- \nabla_x f(\bx^{(j)},\by^{(j)}) \right]\right]\\\nonumber
	\leq&~\frac{1}{2a}\E_j\|\bx^{(j)}-\bx^{(j+1)}\|_2^2\\\nonumber
	&~+\frac{a}{2}\E_j\|\nabla_x f(\widehat\bx^{(k)},\widehat\by^{(k)})-\nabla_x  f_l(\widehat\bx^{(k)},\widehat\by^{(k)})+\nabla_x f_l(\bx^{(j)},\by^{(j)})- \nabla_x f(\bx^{(j)},\by^{(j)}) \|_2^2\\\nonumber
	\leq&~\frac{1}{2a}\E_j\|\bx^{(j)}-\bx^{(j+1)}\|_2^2 +2aL_x^2\|\bx^{(j)}-\bx_* \|_2^2 +2aL_x^2\|\widehat\bx^{(k)}-\bx_* \|_2^2	\\\label{eq:ineq3svrg}
	&~+4aL_x^2V_y(\by^{(j)},\by_* )+4aL_x^2V_y(\widehat\by^{(k)},\by_* ),
	\end{align}
	\normalsize
	where $a>0$ is a constant to be determined later. Similarly, we can prove that 
	\small
	\begin{align}
	\nonumber	
	&~\E_j\left[(\by^{(j)}-\by^{(j+1)})^\top \left[\nabla_y f(\bx^{(j)},\by^{(j)})-\bg_y^{(j)}\right]\right]\\\nonumber
	\leq&~\frac{1}{b}V_y(\by^{(j+1)},\by^{(j)})
	+2bL_y^2\|\widehat\bx^{(k)}-\bx_* \|_2^2+2bL_y^2\|\bx^{(j)}-\bx_* \|_2^2\\\label{eq:ineq4svrg}
	&+4bL_y^2V_y(\by^{(j)},\by_* )+4bL_y^2V_y(\widehat\by^{(k)},\by_* ),
	\end{align}
	\normalsize
	where $b>0$ is a constant to be determined later.
	Next, we consider the third line in the right hand side of \eqref{eq:ineq1svrg}. We first show that
	\small
	\begin{align}
	\nonumber
	&(\bx_* -\bx^{(j+1)})^\top \nabla_x f(\bx^{(j)},\by^{(j)})\\\nonumber
	\leq&(\bx_* -\bx^{(j+1)})^\top \nabla_x f(\bx^{(j+1)},\by^{(j+1)})+\frac{1}{2c}\|\bx_* -\bx^{(j+1)}\|_2^2\\\nonumber
	&+\frac{c}{2}L_x^2\|\bx^{(j)}-\bx^{(j+1)}\|_2^2+\frac{c}{2}L_x^2\|\by^{(j)}-\by^{(j+1)}\|^2\\\nonumber
	\leq&f(\bx_* ,\by^{(j+1)})-f(\bx^{(j+1)},\by^{(j+1)})+\frac{\rho}{2}\|\bx_* -\bx^{(j+1)}\|_2^2+\frac{1}{2c}\|\bx_* -\bx^{(j+1)}\|_2^2\\\label{eq:ineq5svrg}
	&+\frac{c}{2}L_x^2\|\bx^{(j)}-\bx^{(j+1)}\|_2^2+cL_x^2V_y(\by^{(j+1)},\by^{(j)}),
	\end{align}
	\normalsize
	where $c>0$ is a constant to be determined later.
	Similarly, we have  
	\small
	\begin{align}
	\label{eq:ineq6svrg}\nonumber
	&-(\by_* -\by^{(j+1)})^\top \nabla_y f(\bx^{(j)},\by^{(j)})\\\nonumber
	\leq&-f(\bx^{(j+1)},\by_* )+f(\bx^{(j+1)},\by^{(j+1)})+\frac{1}{d}V_y(\by^{(j+1)},\by_* )+\frac{d}{2}L_y^2\|\bx^{(j)}-\bx^{(j+1)}\|_2^2\\
	&+d L_y^2V_y(\by^{(j+1)},\by^{(j)}),
	\end{align}
	\normalsize
	where $d>0$ is a constant to be determined later.
	
	Applying inequalities from \eqref{eq:ineq2svrg} to \eqref{eq:ineq6svrg} to \eqref{eq:ineq1svrg} leads to the following inequality 
	\small
	\begin{align}
	\nonumber
	&\left(\mu_x+\frac{1}{\eta_x }-\frac{1}{c}\right)\frac{\E_j\|\bx_* -\bx^{(j+1)}\|_2^2}{2}
	+\left(\mu_y+\frac{1}{\eta_y }-\frac{1}{d}\right)\E_jV_y(\by_* ,\by^{(j+1)})\\\nonumber
	&+\E_j\left(\tilde\Pc(\bx^{(j+1)})-\tilde\Dc(\by^{(j+1)})\right)\\\nonumber
	\leq&\left(4aL_x^2+4bL_y^2+\frac{1}{\eta_x }\right)\frac{\|\bx_* -\bx^{(j)}\|_2^2}{2}
	+\left(4aL_x^2+4bL_y^2+\frac{1}{\eta_y }\right)V_y(\by_* ,\by )\\\nonumber
	&+\left(2aL_x^2+2bL_y^2\right)\|\bx_* -\widehat\bx^{(k)}\|_2^2+\left(4aL_x^2+4bL_y^2\right)V_y(\by_* ,\widehat\by^{(k)})\\\nonumber
	&+\left(\frac{1}{a}+cL_x^2+dL_y^2-\frac{1}{\eta_x }\right)\frac{\E_j\|\bx^{(j)}-\bx^{(j+1)}\|_2^2}{2}\\\label{eq:ineq6}
	&+\left(\frac{1}{b}+cL_x^2+dL_y^2-\frac{1}{\eta_y }\right)\E_jV_y(\by^{(j+1)},\by^{(j)}).
	\end{align}
	\normalsize
	Choosing $a=b=\frac{1}{48}\frac{\min\left\{\mu_x,\mu_y\right\}}{\max\left\{L_x^2,L_y^2\right\}} $, $c=\frac{2}{\mu_x}$, and $d=\frac{2}{\mu_y}$ in the inequality above, we obtain 
	\small
	\begin{align}
	\nonumber
	&\left(\frac{\mu_x}{2}+\frac{1}{\eta_x }\right)\frac{\E_j\|\bx_* -\bx^{(j+1)}\|_2^2}{2}
	+\left(\frac{\mu_y}{2}+\frac{1}{\eta_y }\right)\E_jV_y(\by_* ,\by^{(j+1)})\\\nonumber
	&+ \E_j\left(\tilde\Pc(\bx^{(j+1)})-\tilde\Dc(\by^{(j+1)})\right)\\\nonumber
	\leq&\left(\frac{\mu_x}{6}+\frac{1}{\eta_x }\right)\frac{\|\bx_* -\bx^{(j)}\|_2^2}{2}
	+\left(\frac{\mu_y}{6}+\frac{1}{\eta_y }\right)V_y(\by_* ,\by )+\frac{\mu_x}{12}\|\bx_* -\widehat\bx^{(k)}\|_2^2+\frac{\mu_y}{6}V_y(\by_* ,\widehat\by^{(k)})\\ \nonumber
	&+\left(52\frac{\max\left\{L_x^2,L_y^2\right\}}{\min\left\{\mu_x,\mu_y\right\}}-\frac{1}{\eta_x }\right)\frac{\E_j\|\bx^{(j)}-\bx^{(j+1)}\|_2^2}{2}\\\label{eq:ineq7}
	&+\left(52\frac{\max\left\{L_x^2,L_y^2\right\}}{\min\left\{\mu_x,\mu_y\right\}}-\frac{1}{\eta_y }\right)\E_jV_y(\by^{(j+1)},\by^{(j)}).
	\end{align}
	\normalsize
	
	Given that	$\eta_x=\frac{1}{\mu_x\Lambda}$ and $\eta_y=\frac{1}{\mu_y\Lambda}$
	with
	$\Lambda=52\frac{\max\left\{L_x^2,L_y^2\right\}}{\min\left\{\mu_x^2,\mu_y^2\right\}}$, we have $52\frac{\max\left\{L_x^2,L_y^2\right\}}{\min\left\{\mu_x,\mu_y\right\}}\leq\frac{1}{\eta_x }$ and $52\frac{\max\left\{L_x^2,L_y^2\right\}}{\min\left\{\mu_x,\mu_y\right\}}\leq\frac{1}{\eta_y }$. After organizing terms, inequality \eqref{eq:ineq7} becomes 
	\small
	\begin{align}
	\nonumber
	&\frac{\mu_x\E_j\|\bx_* -\bx^{(j+1)}\|_2^2}{2}
	+\mu_y\E_jV_y(\by_* ,\by^{(j+1)})+ \left(\frac{1}{2}+\Lambda\right)^{-1}\E_j\left(\tilde\Pc(\bx^{(j+1)})-\tilde\Dc(\by^{(j+1)})\right)\\\nonumber
	\leq&\left(1-\frac{1}{3/2+3\Lambda}\right)\bigg[\frac{\mu_x\|\bx_* -\bx^{(j)}\|_2^2}{2}
	+\mu_yV_y(\by_* ,\by )\bigg]\\\nonumber
	&+\frac{1}{3/2+3\Lambda}\left(\frac{\mu_x\|\bx_* -\widehat\bx^{(k)}\|_2^2}{2}+\mu_yV_y(\by_* ,\widehat\by^{(k)})\right).
	\end{align}
	\normalsize
	Applying this inequality recursively for $j=0,1,\dots,J-2$ and organizing terms, we have
	\small
	\begin{align}
	\nonumber
	&\frac{\mu_x\E \|\bx_* -\bx^{(J-1)}\|_2^2}{2}
	+\mu_y\E V_y(\by_* ,\by^{(J-1)})+ \left(\frac{1}{2}+\Lambda\right)^{-1}\E\left(\tilde\Pc(\bx^{(J-1)})-\tilde\Dc(\by^{(J-1)})\right)\\\nonumber
	\leq&\left(1-\frac{1}{3/2+3\Lambda}\right)^{J-1}\bigg[\frac{\mu_x\|\bx_* -\bx^{(0)}\|_2^2}{2}
	+\mu_yV_y(\by_* ,\by^{(0)})\bigg]\\\nonumber
	&+\frac{1}{2}\left(\frac{\mu_x\|\bx_* -\widehat\bx^{(k)}\|_2^2}{2}+\mu_yV_y(\by_* ,\widehat\by^{(k)})\right).
	\end{align}
	\normalsize
	Recall that $(\bx^{(0)},\by^{(0)})=(\widehat\bx^{(k)},\widehat\by^{(k)})$ and $(\bx^{(J-1)},\by^{(J-1)})=(\widehat\bx^{(k+1)},\widehat\by^{(k+1)})$ and the fact that $J=1+(3/2+3\Lambda)\log(4)$ in Algorithm~\ref{alg:SVRG}. The inequality above implies 
	\small
	\begin{eqnarray}
	\nonumber
	&&\frac{\mu_x\E_j\|\bx_* -\widehat\bx^{(k+1)}\|_2^2}{2}
	+\mu_y\E_jV_y(\by_* ,\widehat\by^{(k+1)})+ \left(\frac{1}{2}+\Lambda\right)^{-1}\left(\tilde\Pc(\widehat\bx^{(k+1)})-\tilde\Dc(\widehat\by^{(k+1)})\right)\\\label{eq:ineq8}
	&\leq&\frac{3}{4}\left[\frac{\mu_x\|\bx_* -\widehat\bx^{(k)}\|_2^2}{2}+\mu_yV_y(\by_* ,\widehat\by^{(k)})\right].
	\end{eqnarray}
	\normalsize
	
	Accordiong to Lemma~\ref{lemma:svrg}, the function (of $\bx$)
	$\max_{\by\in\Y}f(\bx,\by)-\bar r(\by)$ is smooth and its gradient is $L_x\sqrt{1+(1+L_x/\mu_y)^2}$-Lipschitz continuous, and the function (of $\by$)
	$\min_{\bx\in\X}f(\bx,\by)+\bar g(\bx)$ is smooth and its gradient is $L_y\sqrt{1+(1+L_y/\mu_x)^2}$-Lipschitz continuous. Therefore, we define
	\begin{align*}
		\Pc(\bx):= \max_{\by\in\mathbb{R}^q}\phi(\bx,\by)~\text{ and }~
		\Dc(\by):= \min_{\bx\in\mathbb{R}^p}\phi(\bx,\by)
	\end{align*}
	and we can easily show that  
	\small
	\begin{align}
		\nonumber
		&\Pc (\bx)-\Dc(\by)	\\\label{eq:ineq88}
	\leq	& \tilde\Pc(\bx)-\tilde\Dc(\by)+\frac{L_x}{\mu_x}\sqrt{1+\left(1+\frac{L_x}{\mu_y}\right)^2}\frac{\mu_x\|\bx-\bx_* \|_2^2}{2}+\frac{L_y}{\mu_y}\sqrt{1+\left(1+\frac{L_y}{\mu_x}\right)^2}\mu_yV_y(\by_* ,\by)
	\end{align}
	\normalsize
	for any $\bx\in\mathcal{X}$ and $\by\in\Y$. Note that, by the definition of $\Lambda$, we have
	\small
\begin{align*}
	&\frac{L_x}{\mu_x}\sqrt{1+\left(1+\frac{L_x}{\mu_y}\right)^2}
	\frac{1}{2}\left[\left(\frac{L_x}{\mu_x}\right)^2+1+\left(1+\frac{L_x}{\mu_y}\right)^2\right]\\
	\leq&\frac{1}{2}\left[\left(\frac{L_x}{\mu_x}+1\right)^2+\left(1+\frac{L_x}{\mu_y}\right)^2\right]
	\leq\left(1+\frac{\max\{L_x,L_y\}}{\min\{\mu_x,\mu_y\}}\right)^2 \leq \frac{1}{2}+\Lambda.
\end{align*}
\normalsize
By a similar argument, we can show that $\frac{L_y}{\mu_y}\sqrt{1+\left(1+\frac{L_y}{\mu_x}\right)^2}\leq \frac{1}{2}+\Lambda$. With these results, inequality \eqref{eq:ineq88} implies
	\small
\begin{eqnarray*}
	\Pc (\bx)-\Dc(\by)&\leq& \tilde\Pc(\bx)-\tilde\Dc(\by)+\left(\frac{1}{2}+\Lambda\right)\frac{\mu_x\|\bx-\bx_* \|_2^2}{2}+\left(\frac{1}{2}+\Lambda\right)\mu_yV_y(\by_* ,\by).
\end{eqnarray*}
\normalsize
	Choosing $(\bx,\by)=(\widehat\bx^{(k+1)},\widehat\by^{(k+1)})$ and applying \eqref{eq:ineq8} to the inequality above yield
	\small
	\begin{eqnarray}
	\nonumber
	\Pc(\widehat\bx^{(k+1)})-\Dc(\widehat\by^{(k+1)})
	&\leq&\frac{3}{4}\left(\frac{1}{2}+\Lambda\right)\left[\frac{\mu_x\|\bx_* -\widehat\bx^{(k)}\|_2^2}{2}+\mu_yV_y(\by_* ,\widehat\by^{(k)})\right].
	\end{eqnarray}
	\normalsize
	Applying this inequality and \eqref{eq:ineq8} recursively for $k=0,1,\dots,K-2$, we have
	\small
	\begin{eqnarray}
	\nonumber
	\Pc(\widehat\bx^{(K-1)})-\Dc(\widehat\by^{(K-1)})
	&\leq&\left(\frac{3}{4}\right)^{K-1}\left(\frac{1}{2}+\Lambda\right)\left[\frac{\mu_x\|\bx_* -\widehat\bx^{(0)}\|_2^2}{2}+\mu_yV_y(\by_* ,\widehat\by^{(0)})\right].
	\end{eqnarray}
	\normalsize
	According to the facts that $\Dc(\widehat\by^{(K-1)})\leq\min_{\bx\in\mathbb{R}^p}\max_{\by\in\mathbb{R}^q}\phi(\bx,\by)$ and $(\widehat\bx^{(0)},\widehat\by^{(0)})=(\bar\bx ,\bar\by )$, we have
	\small
	\begin{eqnarray}
	\nonumber
	\max_{\by\in\mathbb{R}^q}\phi(\widehat\bx^{(K-1)},\by)-\min_{\bx\in\mathbb{R}^p}\max_{\by\in\mathbb{R}^q}\phi(\bx,\by)
	&\leq&\left(\frac{3}{4}\right)^{K-1}\left(\frac{1}{2}+\Lambda\right)\left[\frac{\mu_x\|\bx_* -\bar\bx \|_2^2}{2}+\mu_yV_y(\by_* ,\bar\by )\right].
	\end{eqnarray}
	\normalsize
	Because $\bar\by\in\argmin_{\by\in\mathbb{R}^p}d_y(\by)$ and Assumption~\ref{assume:finitesum}C holds, we have $V_y(\by_*,\bar\by)\leq \max_{\by\in\Y}d_y(\by)-\min_{\by\in\Y}d_y(\by)\leq D_y^2/2$. As a result, we have
	\small
	\begin{align}
	\nonumber
	&\max_{\by\in\mathbb{R}^q}\phi_{\gamma} (\widehat\bx^{(K-1)},\by;\bar\bx)-\min_{\bx\in\mathbb{R}^p}\max_{\by\in\mathbb{R}^q}\phi_{\gamma} (\bx,\by;\bar\bx)\\\nonumber
	\leq&\max_{\by\in\mathbb{R}^q}\phi_{\gamma,\lambda} (\widehat\bx^{(K-1)},\by;\bar\bx,\bar\by)-\min_{\bx\in\mathbb{R}^p}\max_{\by\in\mathbb{R}^q}\phi_{\gamma,\lambda} (\bx,\by;\bar\bx,\bar\by)+\frac{D_y^2}{2\lambda}\\\nonumber
	=&\max_{\by\in\mathbb{R}^q}\phi(\widehat\bx^{(K-1)},\by)-\min_{\bx\in\mathbb{R}^p}\max_{\by\in\mathbb{R}^q}\phi(\bx,\by)+\frac{D_y^2}{2\lambda}\\\nonumber
	\leq&\left(\frac{3}{4}\right)^{K-1}\left(\frac{1}{2}+\Lambda\right)\left[\frac{\mu_x\|\bx_* -\bar\bx \|_2^2}{2}+\mu_yV_y(\by_* ,\bar\by )\right]+\frac{D_y^2}{2\lambda},
	\end{align}
	\normalsize
	which completes the proof of the second inequality in \eqref{eq:mainineq1svrg} after applying the definitions of $D_x$ and $D_y$ in Assumption~\ref{assume:finitesum}C. 
	The first inequality in \eqref{eq:mainineq1svrg} holds because of  $(\frac{1}{\gamma}-\rho)$-strong convexity of $\phi$ in $\bx$ and the fact that $\frac{1}{\gamma}-\rho\geq \frac{1}{2\gamma}=\mu_x$ as $\gamma\in(0,1/(2\rho)]$.
\end{proof}

\textbf{Proof of Theorem~\ref{thm:totalcomplexitySVRG}}
\begin{proof}
	Recall that $\max_{\by\in\mathbb{R}^q}\phi_{\gamma} (\bx,\by;\bar\bx)=\psi (\bx) + \frac{1}{2 \gamma} \| \bx- \bar \bx\|^2_2$.
	We choose $\bar\bx=\bar\bx^{(t)}$, $\widehat\bx=\bar\bx^{(t+1)}$, $K=k_t$, $\lambda=\lambda_t$, $\mu_y=\mu_y^t$ and $\Lambda=\Lambda_t$ 
	in Proposition~\ref{dualgapresultsvrg} with $k_t$, $\lambda_t$, $\mu_y^t$ and $\Lambda_t$ defined as in the $t$th iteration of Algorithm~\ref{alg:PGSVRG}, and obtain 
	\small
	\begin{align}\label{eq:SVRGconvergemainineq1}
		\mathbb{E}\left[\psi(\bar \bx^{(t+1)})+\frac{1}{2 \gamma} \|\bar \bx^{(t+1)} - \bar \bx^{(t)}\|^2_2-\psi( \bx_\dagger^{(t)}) -\frac{1}{2 \gamma} \| \bx_\dagger^{(t)} - \bar \bx^{(t)}\|^2_2\right]		\leq F_t+\frac{D_y^2}{2\lambda_t}\\\label{eq:SVRGconvergemainineq2}	
		\mathbb{E}\bigg[\frac{\mu_x}{2}\|\bar\bx^{(t+1)}-\bx_\dagger^{(t)}\|_2^2\bigg]\leq F_t+\frac{D_y^2}{2\lambda_t},
	\end{align}
	\normalsize
	where  
	\small
	$$
	F_t:=\left(\frac{3}{4}\right)^{k_t-1}\left(\frac{1}{4}+\frac{\Lambda_t}{2}\right)\left[\mu_xD_x^2+\mu_y^tD_y^2\right].
	$$
	\normalsize
	Using  the inequalities above together with \eqref{eq:fact2}, we can show that
	\small
	\begin{align*}
	\mathbb{E}\psi(\bar \bx^{(t+1)}) &\leq \mathbb{E}\psi( \bx_\dagger^{(t)}) + \frac{1}{2 \gamma} \mathbb{E}\| \bx_\dagger^{(t)} - \bar \bx^{(t)}\|^2_2 - \frac{1}{2 \gamma} \mathbb{E}\|\bar \bx^{(t+1)} - \bar \bx^{(t)}\|^2_2+ F_t+\frac{D_y^2}{2\lambda_t}\\
	&\leq \mathbb{E}\psi( \bx_\dagger^{(t)}) + \frac{1}{2 \gamma} \bigg( \frac{1}{3} \mathbb{E}\|\bx_\dagger^{(t)} - \bar \bx^{(t)}\|^2_2 + 4\mathbb{E} \|\bx_\dagger^{(t)} - \bar \bx^{(t+1)}\|^2_2    \bigg) + F_t+\frac{D_y^2}{2\lambda_t}\\
	&\leq \mathbb{E}\psi( \bx_\dagger^{(t)}) + \frac{1}{6 \gamma} \mathbb{E}\|\bx_\dagger^{(t)} - \bar \bx^{(t)}\|^2_2 + 9\left(F_t+\frac{D_y^2}{2\lambda_t}\right).
	\end{align*}
	\normalsize
	where the first two inequalities are because of \eqref{eq:SVRGconvergemainineq1} and \eqref{eq:fact2}, respectively, and the last inequality is because  \eqref{eq:SVRGconvergemainineq2} and $\mu_x= \frac{1}{2\gamma}$.
	Combining this inequality with \eqref{eq:fact1} leads to 
	\small
	\begin{align*}
	\mathbb{E}\psi(\bar \bx^{(t+1)}) &\leq \mathbb{E}\psi(\bar \bx^{(t)}) - \frac{1}{3 \gamma} \mathbb{E}\|\bx_\dagger^{(t)} - \bar \bx^{(t)}\|^2_2 +9\left(F_t+\frac{D_y^2}{2\lambda_t}\right).
	\end{align*}
	\normalsize
	Adding the inequalities for $t=0,...,T-1$ yields
	\normalsize
	\begin{align*}
	\mathbb{E}\psi(\bar \bx^{(T)}) \leq &\psi(  \bar\bx^{(0)}) - \sum^{T-1}_{t=0}  \frac{1}{3 \gamma} \mathbb{E}\|\bx_\dagger^{(t)} - \bar \bx^{(t)}\|^2_2+9\sum^{T-1}_{t=0} \left(F_t+\frac{D_y^2}{2\lambda_t}\right).
	\end{align*}
	\normalsize
	Rearranging the inequality above gives
	\small
	\begin{align*}
	\sum^{T-1}_{t=0}   \mathbb{E}\|\bx_\dagger^{(t)} - \bar \bx^{(t)}\|^2_2 \leq  3 \gamma\bigg( \psi(\bar \bx^{(0)})-\psi^*+ 9\sum^{T-1}_{t=0}\left(F_t+\frac{D_y^2}{2\lambda_t}\right)\bigg),
	\end{align*}
	\normalsize
	where $\psi^* = \min_{\bx \in \mathcal{R}^p} \psi(\bx)>-\infty$. Let $\tau$ be a uniform random variable supported on $\{0,1,...,T-1\}$. Then, we have 
	\small
	\begin{align}
	\label{eqn:boundxx3}
	\mathbb{E}\|\bx_\dagger^{(\tau)} - \bar \bx^{(\tau)}\|^2_2 \leq  \frac{3 \gamma}{T}\bigg(  \psi(\bar \bx^{(0)})-\psi^*+ 9\sum^{T-1}_{t=0} \left(F_t+\frac{D_y^2}{2\lambda_t}\right)\bigg).
	\end{align}
	\normalsize
	
	Next, we derive the complexity of Algorithm~\ref{alg:PGSVRG} when $\mu>0$ and $\mu=0$ separately. 
	
	Suppose $\mu>0$. Algorithm~\ref{alg:PGSVRG} chooses 
	$$
	k_t = \left\lceil 1+4\log\left(9(t+1)^2\left(\frac{1}{4}+\frac{\Lambda_t}{2}\right)(\mu_xD_x^2+\mu_y^tD_y^2) \right)\right\rceil\text{ and } \lambda_t=+\infty
	$$ 
	such that $\frac{1}{\lambda_t}=0$, $\mu_y^t=\mu$, and  $\Lambda_t=\frac{52\max\left\{L_x^2,L_y^2\right\}}{\min\left\{\mu_x^2,\mu^2\right\}}=O\left(\gamma^2+\frac{1}{\mu^2}\right)$. 
	Therefore,
	\small
	\begin{align*}
	9 \sum^{T-1}_{t=0} \left(F_t+\frac{D_y^2}{2\lambda_t}\right)
	&=\sum^{T-1}_{t=0}
	\frac{1}{(t+2)^2}\leq\frac{\pi^2}{6}.
	\end{align*}
	\normalsize
	Recall that $\mathbb{E}[\text{Dist} (\mathbf{0}, \partial \psi(\bx_\dagger^{(\tau)}) )^2] \leq  \frac{1}{\gamma^2}\mathbb{E}\|\bx_\dagger^{(\tau)} - \bar \bx^{(\tau)}\|^2_2$ holds by the definition of $\bx_\dagger^{(\tau)}$. According to \eqref{eqn:boundxx3}, we can ensure
$\mathbb{E}[\text{Dist} (\mathbf{0}, \partial \psi(\bx_\dagger^{(\tau)}) )^2]\leq \epsilon^2$
	by choosing 
	\small
	$$
	T=\frac{3(\psi(\bar \bx^{(0)})-\psi^*+\pi^2/6)}{\gamma\epsilon^2}.
	$$
	\normalsize
	In this case, there are $J=j_t:=\left\lceil 1+(\frac{3}{2}+3\Lambda_t)\log(4)\right\rceil$ inner iterations in Algorithm~\ref{alg:SVRG} at iterate $t$ of  Algorithm~\ref{alg:PGSVRG}. 
	One stochastic gradient $(\bg_x^{(j)},\bg_y^{(j)})$ of $f$ is computed in each inner iteration of Algorithm~\ref{alg:SVRG} and thus there are $k_tj_t$ stochastic gradients computed in the $t$th iteration of Algorithm~\ref{alg:PGSVRG}. 
	One deterministic gradient $(\hbg_x^{(k)},\hbg_y^{(k)})$ of $f$ is computed in each outer iteration of Algorithm~\ref{alg:SVRG} whose complexity is equivalent to computing $n$ stochastic gradients of $f$. Hence, there are $k_tn+k_tj_t$ (equivalent) stochastic gradients computed in the $t$th iteration of Algorithm~\ref{alg:PGSVRG}. Therefore, the total number of stochastic gradients computed to generate $\bar \bx^{(\tau)}$ is bounded by
	\begin{align*}
	&\sum^{T-1}_{t=0} k_t(n+j_t) \\
	=&  \sum^{T-1}_{t=0}\left(1+4\log\left(9(t+1)^2\left(\frac{1}{4}+\frac{\Lambda_t}{2}\right)(\mu_xD_x^2+\mu_y^tD_y^2) \right)\right)\left(n+ 1+\left(\frac{3}{2}+3\Lambda_t\right)\log(4)\right)\\
	=&\tilde {O}(nT)= \tilde {O}\left(\frac{n}{\gamma\epsilon^2}+\left(\gamma^2+\frac{1}{\mu^2}\right)\frac{1}{\gamma\epsilon^2}\right)
	=\tilde {O}\left(\frac{n}{\gamma\epsilon^2}+\frac{\gamma}{\epsilon^2}+\frac{1}{\mu^2\gamma\epsilon^2}\right).
	\end{align*}

	Suppose $\mu=0$. Algorithm~\ref{alg:PGSVRG} chooses 
	$$
	k_t = \left\lceil 1+4\log\left(9(t+1)^2\left(\frac{1}{4}+\frac{\Lambda_t}{2}\right)(\mu_xD_x^2+\mu_y^tD_y^2) \right)\right\rceil\text{ and } \lambda_t=t+2
	$$ 
	such that $\mu_y^t=\frac{1}{\lambda_t}=\frac{1}{t+2}$ and  $\Lambda_t=\frac{52\max\left\{L_x^2,L_y^2\right\}}{\min\left\{\mu_x^2,(t+2)^{-2}\right\}}=O\left(\gamma^2+t^2\right)$. We denote $J$ in Algorithm~\ref{alg:SVRG} under this case
	as $j_t$ so that $j_t=\left\lceil 1+(\frac{3}{2}+3\Lambda_t)\log(4)\right\rceil$. Therefore,
	\small
	\begin{align*}
	9\sum^{T-1}_{t=0} \left(F_t+\frac{D_y^2}{2\lambda_t}\right)
	&=\sum^{T-1}_{t=0}
	\frac{1}{(t+2)^2}+9\sum^{T-1}_{t=0}\frac{D_y^2}{2(t+2)}
	\leq\frac{\pi^2}{6}+\ln(T+2)\frac{9D_y^2}{2}.
	\end{align*}
	\normalsize
	Recall that $\mathbb{E}[\text{Dist} (\mathbf{0}, \partial \psi(\bx_\dagger^{(\tau)}) )^2] \leq  \frac{1}{\gamma^2}\mathbb{E}\|\bx_\dagger^{(\tau)} - \bar \bx^{(\tau)}\|^2_2$ holds by the definition of $\bx_\dagger^{(\tau)}$. According to \eqref{eqn:boundxx3}, we can ensure
	$\mathbb{E}[\text{Dist} (\mathbf{0}, \partial \psi(\bx_\dagger^{(\tau)}) )^2]\leq \epsilon^2$
	by choosing 
	\small
	$$
	T=\max\left\{\frac{6(\psi(\bar \bx^{(0)})-\psi^*+\pi^2/6)}{\gamma\epsilon^2},
	\frac{54 D_y^2}{\gamma\epsilon^2}\ln\left(\frac{54 D_y^2}{\gamma\epsilon^2}\right)
	\right\}.
	$$
	\normalsize
	Following the same scheme of counting as in the case when $\mu>0$, the total (equivalent) number of stochastic gradients computed to generate $\bar \bx^{(\tau)}$ is bounded by
	$$ \sum^{T-1}_{t=0} k_t(n+j_t) = \sum^{T-1}_{t=0}\tilde {O} (n+\gamma^2+t^2)= \tilde {O}\left(\frac{n}{\gamma\epsilon^2}+\frac{\gamma}{\epsilon^2}+\frac{1}{\gamma^3\epsilon^6}\right). $$
\end{proof}

\end{document}